\newtheorem{theorem}{Theorem}
\newtheorem{corollary}[theorem]{Corollary}
\newtheorem{proposition}[theorem]{Proposition}
\newenvironment{proof}{\paragraph{Proof:}}{\hfill$\square$}
\newcommand{\Du}{\operatorname{D}_u}
\newcommand{\Dx}{\operatorname{D}_x}
\newcommand{\rcp}[1]{\frac{1}{#1}} 
\newcommand{\cE}{{\mathcal E}}
\newcommand{\cG}{{\mathcal G}} 
\newcommand{\cU}{{\mathcal U}}
\newcommand{\pdiff}[3]{\frac{\partial^{#3}#1}{\partial #2^{#3}}} 
\newcommand{\bp}{\begin{proposition}} 
\newcommand{\ep}{\end{proposition}} 
\newcommand{\bpf}{\begin{proof}} 
\newcommand{\epf}{\end{proof}} 
\title{Combinatorial comparison of general galled trees, \\ time-consistent galled trees, \\ and simplex time-consistent galled trees}
\author{Lily Agranat-Tamir\thanks{Department of Biology, Stanford University, Stanford, CA 94305 USA}\hspace{.15cm} \\
Michael Fuchs\thanks{Department of Mathematical Sciences, National Chengchi University, Taipei, Taiwan} \\
Bernhard Gittenberger\thanks{Department of Discrete Mathematics and Geometry, Technische Universit\"{a}t Wien, Austria} \\
Noah A.~Rosenberg$^*$ \\
Karthik V.~Seetharaman$^*$}
\date{December 16, 2025}
\begin{document}
\onehalfspacing
\maketitle

\noindent{\bf Abstract.} Rooted binary phylogenetic networks are extensions of rooted binary trees, adding reticulation nodes that are designed to represent evolutionary processes that involve hybridization events. Enumerative combinatorics studies have counted leaf-labeled phylogenetic networks in a variety of classes, finding that when the number of reticulations is fixed, the time-consistent galled trees are asymptotically less numerous than each of several network classes that had been previously examined. Here we provide enumerative results on two additional network classes: general galled trees and simplex time-consistent galled trees. We show that for a fixed number of galls, as the number of leaves goes to infinity, the asymptotic count of general galled trees is identical to that of time-consistent galled trees, whereas the count of simplex time-consistent galled trees is smaller. If the number of galls is not restricted, then the asymptotic approximations all differ: simplex time-consistent galled trees are less numerous than time-consistent galled trees, which are in turn less numerous than general galled trees. We also report a variety of additional results: recursions to count the studied networks with small numbers of leaves a fixed number of galls, as well as enumerative results for unlabeled networks in the classes that we investigate.

\vskip .2cm
\noindent{\bf Key words:} galled trees, leaf-labeled trees, phylogenetic networks, symbolic method, unlabeled trees

\vskip .2cm 
\noindent{\bf Mathematics subject classification (2020):} 05A16, 05A15, 05C05, 92D15

\clearpage
\section{Introduction}

Rooted binary phylogenetic networks are rooted binary trees with the addition of nodes of in-degree 2 and out-degree 1, \emph{reticulation} or \emph{hybrid nodes}, which produce \emph{reticulation cycles}. These networks are graph structures that are used to study evolutionary processes in which biological lineages that have previously diverged come back together (reviewed by \cite{KongEtAl22}).

Many classes of phylogenetic networks have had their enumerative properties studied mathematically~\citep{AgranatTamirEtAl24b, AgranatTamirEtAl25, AgranatTamirEtAl24a, BienvenuEtAl21, BouvelEtAl20, CardonaAndZhang20, ChangAndFuchs24, ChangEtAl24, FuchsAndGittenberger25, FuchsEtAl19, FuchsEtAl21, FuchsEtAl22b, FuchsEtAl25,  FuchsEtAl22a, FuchsEtAl21b, GUNAWAN2020644, Mansouri22, MathurAndRosenberg23, Zhang19}. In recent work, we have focused on one such class, the \emph{time-consistent galled trees}~\citep{AgranatTamirEtAl24b, AgranatTamirEtAl25,  AgranatTamirEtAl24a, MathurAndRosenberg23}. In \emph{galled trees}, each reticulation cycle shares no vertices or edges with other such cycles, and is known as a \emph{gall}~\citep{AgranatTamirEtAl24b, AgranatTamirEtAl25, MathurAndRosenberg23}. In \emph{time-consistent} galled trees, the two nodes that are immediately ancestral to a reticulation node---its \emph{hybridizing nodes}---do not have an ancestor--descendant relation, so that they can be viewed as contemporaneous if a sense of time is imposed on the network. \emph{Time-consistent} galled trees are equivalent to \emph{normal} galled trees.

A hallmark property in enumerative combinatorics, namely the asymptotic approximation of the size of a combinatorial class as a suitable variable grows in size, has been studied in several classes of phylogenetic networks: reticulation-visible networks, tree-child networks, normal networks, galled networks, galled tree-child networks, and time-consistent galled trees, as well as in the class of general phylogenetic networks. Specifically, the numbers of leaf-labeled general phylogenetic networks \citep{Mansouri22}, reticulation-visible networks \citep{ChangAndFuchs24}, tree-child networks \citep{FuchsEtAl22b}, normal networks \citep{FuchsEtAl22b, FuchsEtAl25}, galled networks \citep{ChangAndFuchs24}, and galled tree-child networks \citep{ChangEtAl24}, all with a fixed number $k$ of reticulation nodes, as the number of leaves $n$ goes to infinity, have the same asymptotic growth. Denoting the counts for these various classes by PN, RV, TC, N, GN, and GTC, respectively, as $n \rightarrow \infty$, the enumerations satisfy:
\begin{equation}
\label{eq:1}
PN_{n,k}\sim RV_{n,k} \sim TC_{n,k} \sim N_{n,k} \sim GN_{n,k} \sim GTC_{n,k} \sim \frac{2^{k-1}\sqrt{2}}{k!}n^{n+2k-1}\Big( \frac{2}{e} \Big)^n.
\end{equation}

We found, however, that the corresponding approximation for the number of leaf-labeled time-consistent galled trees with a fixed number of galls---equivalent to the number of reticulation nodes---differs. In particular, the subexponential part of the asymptotic expansion is smaller by a factor of $(2k)!/(2^k k!)$~\citep[eq.~(53)]{AgranatTamirEtAl25}. Denoting the count of time-consistent galled trees by C,
\begin{equation}
\label{eq:2}
    C_{n,k} \sim \frac{2^{2k-1}}{(2k)! \, \sqrt{\pi}} n^{2k-\frac{3}{2}} \Big( \frac{1}{2} \Big)^{-n} n! \sim \frac{2^{2k-1}\sqrt{2}}{(2k)!}n^{n+2k-1}\Big( \frac{2}{e} \Big)^n.
\end{equation}

In this study, we consider the enumerative properties for two additional classes of galled trees: one that contains the time-consistent galled trees as a proper subset, and one that is itself a proper subset of the time-consistent galled trees. For the galled trees in general, or \emph{general galled trees}---which contain the time-consistent galled trees---does the asymptotic enumeration follow the larger eq.~(\ref{eq:1}), like larger classes of phylogenetic networks, or the smaller eq.~(\ref{eq:2}), like the time-consistent galled trees? For the class known as the \emph{simplex time-consistent galled trees}, which are contained among the time-consistent galled trees, does the asymptotic enumeration follow the time-consistent galled trees as in eq.~(\ref{eq:2}), or is it still smaller?

In Section \ref{sec:ulggt}, we count unlabeled general galled trees, and in Section \ref{sec:llggt}, we count leaf-labeled general galled trees. In Section \ref{sec:ul1tcgt}, we count unlabeled simplex time-consistent galled trees, and in Section \ref{sec:ll1tcgt}, we count leaf-labeled simplex time-consistent galled trees. In particular, we analyze the asymptotic approximations of the counts of these networks as the number of leaves goes to infinity, either with a fixed number of galls (Sections \ref{sec:fixedulggt} and \ref{sec:fixedllggt} for general galled trees and Sections \ref{sec:fixedul1tcgt} and \ref{sec:fixedll1tcgt} for simplex time-consistent galled trees) or with any number of galls (Sections \ref{sec:allulggt} and \ref{sec:allllggt} for general galled trees and Sections \ref{sec:allul1tcgt} and \ref{sec:allll1tcgt} for simplex time-consistent galled trees). Section \ref{sec:allllggt}, covering all numbers of galls in leaf-labeled general galled trees, repeats analysis conducted by \cite{BouvelEtAl20} and \cite{FuchsAndGittenberger25}; the latter study in addition contains results for the unlabeled case of Section \ref{sec:allulggt}. Finally, in Section \ref{sec:exact}, we find recursions that we use to obtain the exact numbers of the various networks for small numbers of leaves. 

The analysis confirms that general galled trees with a fixed number of galls behave asymptotically in the same way as time-consistent galled trees with a fixed number of  galls. However, simplex time-consistent galled trees with a fixed number of galls behave differently. The three classes of networks differ asymptotically when no restrictions are imposed on the number of galls, that is, when looking at the union of networks with fixed numbers of galls.

\section{Definitions} \label{sec:defs}

The study builds from \cite{AgranatTamirEtAl25} and uses the same terms, where appropriate. Many definitions for phylogenetic networks were provided by \cite{KongEtAl22}. We consider``non-plane'' networks, in which changing the left--right order in which nodes are graphically depicted does not change the network.

A \emph{rooted binary phylogenetic network} is a simple directed rooted graph with the following four types of nodes: (1) a unique \emph{root} node with in-degree 0 and out-degree 2; (2) \emph{tree} nodes with in-degree 1 and out-degree 2; (3) \emph{leaf} nodes with in-degree 1 and out-degree 0; (4) \emph{reticulation} nodes with in-degree 2 and out-degree 1. As a simple graph, a phylogenetic network has no edges that connect a node to itself, and each pair of nodes connected by an edge are connected by precisely one edge. A rooted binary tree possesses the first three types of nodes; a phylogenetic network can also have the fourth type. We sometimes refer to the rooted binary phylogenetic networks as \emph{general phylogenetic networks}.

The direction of the network is from the root to the leaves. We sometimes refer to reticulation nodes as \emph{hybrid} nodes; the two parents of a hybrid node are called \emph{hybridizing} nodes. A \emph{reticulation cycle} is a pair of directed paths with common start and end nodes that are otherwise node-disjoint. 

\begin{figure}[tb]
    \centering
    \includegraphics[width=13cm]{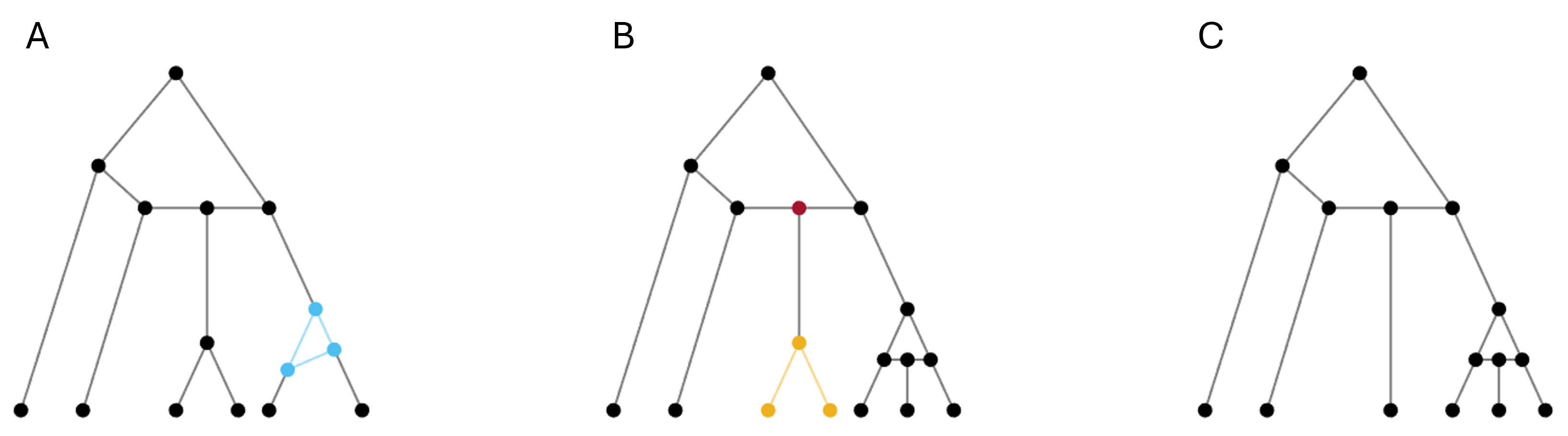}
    \vspace{-.2cm}
    \caption{Three classes of phylogenetic networks. (A) A general galled tree. This galled tree is not a time-consistent galled tree because it has a gall (light blue) whose top node is not separated from the reticulation node by at least two edges (it is a parent of the reticulation node). (B) A time-consistent galled tree. This tree is not a simplex galled tree because it has a reticulation node (red) that is the ancestor of a subtree that is  not a single leaf (orange). (C) A simplex time-consistent galled tree. }
    \label{fig:definitions}
\end{figure}

\emph{Galled trees} are rooted binary phylogenetic networks with the additional property that each node is found in at most one reticulation cycle (Figure \ref{fig:definitions}A). In galled trees, reticulation cycles are called \emph{galls}. A gall possesses an ancestor node (the ``top'' node), from which the two directed paths in the gall originate, and a hybrid node where these paths intersect.

Technically, galled trees that possess reticulation nodes are not actually trees in the mathematical sense; in accord with the term ``galled trees,'' we refer to the galled trees rooted at internal nodes of a galled tree as \emph{subtrees}; these subtrees also need not be trees in a technical sense. For clarity, we sometimes refer to galled trees as \emph{general galled trees}.

\emph{Time-consistent galled trees} are galled trees with the additional property that the ancestor node $r$ of a gall, from which the two disjoint paths reach the reticulation node $a_r$, is separated from $a_r$ by two or more edges  (Figure \ref{fig:definitions}B). This last condition encodes the requirement that we consider only \emph{normal} galled trees; \emph{normal} galled trees are equivalent to \emph{time-consistent} galled trees. The definition of time-consistent galled trees follows d \cite{AgranatTamirEtAl24b},  \cite{AgranatTamirEtAl24a}, and \cite{MathurAndRosenberg23}.

\emph{Simplex galled trees}, also known as \emph{simplicial galled trees} or \emph{1-component galled trees}, are galled trees for which the subtree of each reticulation node is a single leaf (Figure \ref{fig:definitions}C; \cite{FuchsAndGittenberger25, FuchsANDSteel25, Zhang2022}). Henceforth we refer to this class of trees with  the term \emph{simplex} galled trees. Simplex galled trees are suited to scenarios with recent hybridization events, after which insufficient time has passed for lineages descended from reticulation nodes to give rise to further reticulations. 

For time-consistent galled trees, viewing each gall as a representation of a biological merging event, for each gall we depict its hybridizing nodes and hybrid node on a horizontal line. This depiction represents the simultaneity of the hybridizing and hybrid nodes when a galled tree is viewed as a structure evolving in time. Such a depiction is possible in time-consistent networks but not in galled trees for which the two parents of a reticulation node have a parent--child relation (Figure \ref{fig:definitions}A). 

In the network classes that we study---general galled trees (G), time-consistent galled trees (C), and simplex time-consistent galled trees (SC)---we consider both unlabeled and leaf-labeled networks. We stated the definition of general phylogenetic networks (PN) above; although we do not need formal definitions of reticulation-visible networks (RV), tree-child networks (TC), normal networks (N), galled networks (GN), and galled tree-child networks (GTC), it is useful to specify inclusion relationships: the set of galled trees G is a subset of the classes GTC, GN, TC, RV, and PN. The set of time-consistent galled trees C is a subset of G and of N. The set of simplex time-consistent galled trees SC is a subset of C.

\section{Unlabeled general galled trees} 
\label{sec:ulggt}

Note that whereas the number of galls $g$ for time-consistent galled trees $n$ satisfies $0 \leq g \leq \lfloor \frac{n-1}{2} \rfloor$ (Section 2.4 of \cite{AgranatTamirEtAl24a}), for general galled trees, the number of galls satisfies $0 \leq g \leq n-1$. This result can be obtained by noting that the smallest general galled tree has a root gall in which two nodes descend from the root, one of which is a reticulation node, and from both of which a single leaf descends ($n=2$, $g=1$). Each subsequent gall replaces a leaf with a minimum of two leaves; a galled tree with $n$ leaves and $n-1$ galls can be constructed by sequentially replacing leaves with triangular galls from which two leaves descend, so that a general galled tree with $n$ leaves has at most $n-1$ galls.

\subsection{No galls} 
\label{sec:ggt0galls}

The unlabeled general galled trees with no galls are the unlabeled binary trees. The generating function $\mathcal{U}(t) = \sum_{n=0}^\infty U_n t^n$, whose term $U_n$ counts unlabeled binary trees with $n$ leaves, satisfies~\citep[][p.~55]{Harding71, Otter48}, 
$$\mathcal{U}(t) = t + \frac{1}{2}[\mathcal{U}(t^2) + \mathcal{U}(t)^2].$$

Asymptotically, as $t \rightarrow \rho^{-}$, the generating function has equivalence 
\begin{equation}
    \mathcal{U}(t) \sim 1 - \gamma \Big(1 - \frac{t}{\rho}\Big)^{\frac{1}{2}},
    \label{eq:utasymp}
\end{equation} 
where $\rho \approx 0.40270$ and $\gamma \approx 1.13003$~\citep[pp.~476-477]{FlajoletAndSedgewick09}. Further, as $n\rightarrow \infty$,
\begin{equation}
U_n = [t^n] \mathcal{U}(t) \sim \frac{\gamma}{2 \sqrt \pi} n^{-\frac{3}{2}} \rho^{-n}.
\label{eq:Wedderburn}
\end{equation}
The $U_n$ are the Wedderburn--Etherington numbers (OEIS A001190). The first terms appear in Table \ref{table:Etildng}. 

\subsection{One gall} 
\label{sec:ggt1gall}

In the case of one gall, if the root is not part of the gall, then as in the time-consistent case \citep{AgranatTamirEtAl25}, the root has two children, one of which has one gall and the other of which is a binary tree (component 1 in eq.~(\ref{eq:ulsmonegall}) and Figure \ref{fig:ggtonegall}). If the root is part of the gall, then two sequences of trees lead from the root to the reticulation node. Either both sequences of trees are nonempty, as in time-consistent galled trees (component 2), or one is nonempty and the other is empty (component 3). 

\begin{figure}[tb]
    \centering
    \includegraphics[width=6cm]{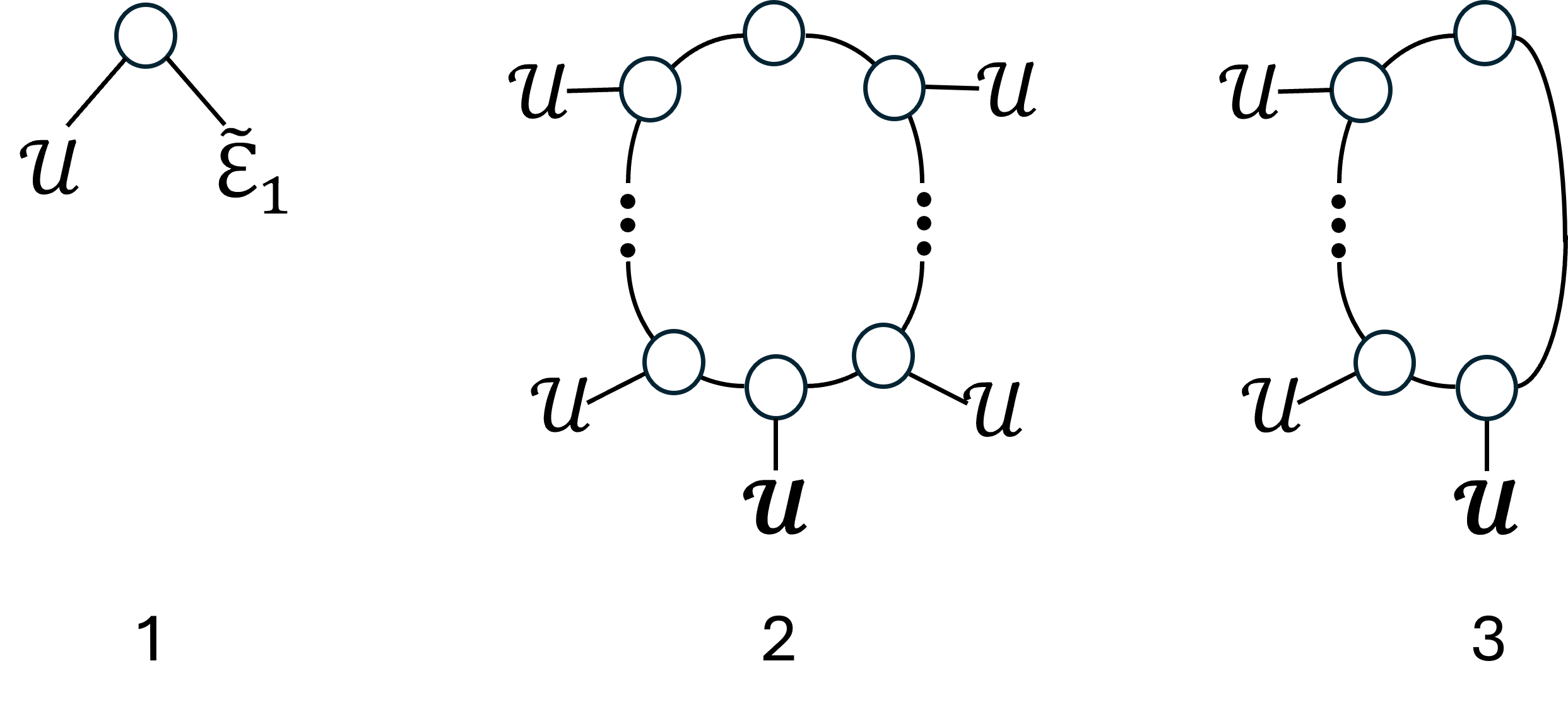}
    \vspace{-0.3cm}
    \caption{The three structures possible for a (non-plane) unlabeled general galled tree with exactly one gall.}
    \label{fig:ggtonegall}
\end{figure}

We follow the symbolic method for enumerating unlabeled structures, as used by \cite{AgranatTamirEtAl25} for unlabeled time-consistent galled trees. In symbolic notation~\citep[p.~93]{FlajoletAndSedgewick09}, 
\begin{equation}
\mathcal{\tilde{E}}_1 = \{\circ\} \times 
\big[\underbrace{\mathcal{U} \times \mathcal{\tilde{E}}_1}_{1} \textrm{   }
\dot{\cup} \textrm{   } \underbrace{\mathcal{U}\times\text{MSET}_2\big(\text{SEQ}^+(\mathcal{U})\big)}_{2}   
\textrm{   } \dot{\cup} \textrm{   } \underbrace{\mathcal{U} \times \text{SEQ}^+(\mathcal{U})}_{3}  \big].
\label{eq:ulsmonegall}
\end{equation}
Converting to a generating function $\mathcal{\tilde{E}}_1(t) = \sum_{n=0}^\infty \tilde{E}_{n,1} t^n$, 
$$\mathcal{\tilde{E}}_1(t) = \mathcal{U}(t) \, \mathcal{\tilde{E}}_1(t) + \frac{\mathcal{U}(t)}{2} \bigg[\bigg(\frac{\mathcal{U}(t)}{1 - \mathcal{U}(t)}\bigg)^2 + \frac{\mathcal{U}(t^2)}{1 - \mathcal{U}(t^2)}\bigg] + \frac{\mathcal{U}(t)^2}{1 - \mathcal{U}(t)}.$$
Solving for $\mathcal{\tilde{E}}_1(t)$, we obtain 
\begin{equation}
\mathcal{\tilde{E}}_1(t) = \frac{\mathcal{U}(t)^3}{2[1-\mathcal{U}(t)]^3} + \frac{\mathcal{U}(t) \, \mathcal{U}(t^2)}{2[1-\mathcal{U}(t)] \, [1-\mathcal{U}(t^2)]} + \frac{\mathcal{U}(t)^2}{[1 - \mathcal{U}(t)]^2}.
\end{equation}

In comparison with $\mathcal{E}_1(t) = \sum_{n=0}^\infty E_{n,1} t^n$, the generating function for the unlabeled time-consistent galled trees with one gall~\citep[eq.~(6)]{AgranatTamirEtAl25}, the generating function for the unlabeled general galled trees with one gall contains only the extra term $\mathcal{U}(t)^2 / [1 - \mathcal{U}(t)]^2$ from component 3 in eq.~(\ref{eq:ulsmonegall}). The extra term is not a term of highest order; asymptotically, from eq.~(\ref{eq:utasymp}), as $t \rightarrow \rho^-$, $\mathcal{\tilde{E}}_1(t) \sim \mathcal{E}_1(t)$. 

From eq.~(7) of \cite{AgranatTamirEtAl25}, we conclude, as $n\rightarrow \infty$,
\begin{equation}
\label{eq:En1}
    \tilde{E}_{n,1} =[t^n] \mathcal{\tilde{E}}_1(t) \sim \frac{1}{\gamma^3 \sqrt{\pi}} n^{\frac{1}{2}} \rho^{-n}.
\end{equation}
The first values of $\tilde{E}_{n,1}$ are calculated in Section \ref{sec:exactulggt}. They appear in Table \ref{table:Etildng}. 

\subsection{Two galls}
\label{sec:twogalls}

For two galls, we have six cases (Figure \ref{fig:ggttwogalls}). If the root node is not part of a gall, then its two children are either (1) a general galled tree with two galls and a general galled tree with zero galls, or (2) two general galled trees with one gall each. If the root node is part of a gall, then there are four cases: (3) both sequences from the root to the reticulation node are nonempty and the second gall is descended from the reticulation node; (4) both sequences from the root to the reticulation node are nonempty and the second gall is on one of these paths; (5) one sequence from the root to the reticulation node is nonempty and contains the second gall, and the other sequence is empty; (6) one sequence from the root to the reticulation node is nonempty, the other is empty, and the second gall is descended from the reticulation node. In symbolic notation,
\begin{align}
\mathcal{\tilde{E}}_2 & = \{\circ\} \times 
\big[\underbrace{\mathcal{U} \times \mathcal{\tilde{E}}_2}_{1} 
\textrm{   }\dot{\cup}\textrm{   } \underbrace{\text{MSET}_2(\mathcal{\tilde{E}}_1)}_{2} 
\textrm{   }\dot{\cup}\textrm{   } \underbrace{\mathcal{\tilde{E}}_1 \times \text{MSET}_2 \big(\text{SEQ}^+(\mathcal{U}) \big)}_{3}
\textrm{   }\dot{\cup}\textrm{   } \underbrace{\mathcal{U} \times \big(\text{SEQ}(\mathcal{U}) \times \mathcal{\tilde{E}}_1 \times \text{SEQ}(\mathcal{U}) \big) \times \text{SEQ}^+(\mathcal{U})}_{4} \nonumber \\ &  
\quad \textrm{   }\dot{\cup}\textrm{   } \underbrace{\mathcal{U} \times \text{SEQ}(\mathcal{U}) \times \mathcal{\tilde{E}}_1 \times \text{SEQ}(\mathcal{U})}_{5}
\textrm{   }\dot{\cup}\textrm{   } \underbrace{\mathcal{\tilde{E}}_1 \times \text{SEQ}^+(\mathcal{U})}_{6}\big].
\label{eq:ulsmtwogalls}
\end{align}

\begin{figure}[tb]
    \centering
    \includegraphics[width=12cm]{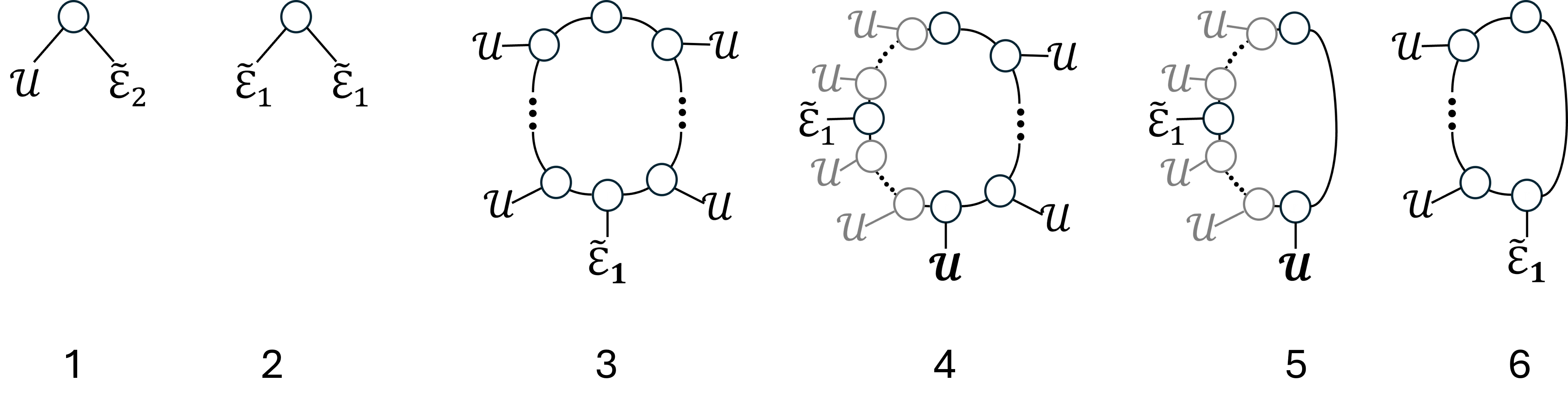}
    \vspace{-0.4cm}
    \caption{The six structures possible for a (non-plane) unlabeled general galled tree with exactly two galls.}
    \label{fig:ggttwogalls}
\end{figure}

Converting to a generating function $\mathcal{\tilde{E}}_2(t) = \sum_{n=0}^\infty \tilde{E}_{n,2} t^n$,
$$\mathcal{\tilde{E}}_2(t) = \mathcal{U}(t) \, \mathcal{\tilde{E}}_2(t) 
+ \frac{1}{2}[\mathcal{\tilde{E}}_1(t)^2 + \mathcal{\tilde{E}}_1(t^2)] 
+ \frac{\mathcal{\tilde{E}}_1(t)}{2} \bigg(\frac{\mathcal{U}(t)^2}{[1 - \mathcal{U}(t)]^2} + \frac{\mathcal{U}(t^2)}{1 - \mathcal{U}(t^2)}\bigg) 
+ \frac{\mathcal{U}(t)^2 \, \mathcal{\tilde{E}}_1(t)}{[1 - \mathcal{U}(t)]^3} 
+ \frac{\mathcal{U}(t) \, \mathcal{\tilde{E}}_1(t)}{[1 - \mathcal{U}(t)]^2} 
+ \frac{\mathcal{U}(t) \, \mathcal{\tilde{E}}_1(t)}{1 - \mathcal{U}(t)}.$$
Solving for $\mathcal{\tilde{E}}_2(t)$, we get:
\small
\begin{equation}
\mathcal{\tilde{E}}_2(t) = \frac{\mathcal{\tilde{E}}_1(t)^2}{2[1 - \mathcal{U}(t)]} 
+ \frac{\mathcal{\tilde{E}}_1(t^2)}{2[1 - \mathcal{U}(t)]} 
+ \frac{\mathcal{U}(t)^2 \, \mathcal{\tilde{E}}_1(t)}{2[1 - \mathcal{U}(t)]^3} 
+ \frac{\mathcal{U}(t^2) \, \mathcal{\tilde{E}}_1(t)}{2[1 - \mathcal{U}(t)] \,[1 - \mathcal{U}(t^2)]} 
+ \frac{\mathcal{U}(t)^2 \, \mathcal{\tilde{E}}_1(t)}{[1 - \mathcal{U}(t)]^4}
+ \frac{\mathcal{U}(t) \, \mathcal{\tilde{E}}_1(t)}{[1 - \mathcal{U}(t)]^3} + \frac{\mathcal{U}(t) \, \mathcal{\tilde{E}}_1(t)}{[1 - \mathcal{U}(t)]^2}.    
\end{equation}
\normalsize

In comparison with $\mathcal{E}_2(t) = \sum_{n=0}^\infty E_{n,2} t^n$, the generating function for the unlabeled time-consistent galled trees with two galls~\citep[eq.~(9)]{AgranatTamirEtAl25}, the generating function for the unlabeled general galled trees with two galls contains two terms that arise from cases 5 and 6 in Figure \ref{fig:ggttwogalls}. Neither is a term of highest degree; asymptotically, from eq.~(\ref{eq:utasymp}), as $t \rightarrow \rho^-$, $\mathcal{\tilde{E}}_2(t) \sim \mathcal{E}_2(t)$. 

From eq.~(10) of \cite{AgranatTamirEtAl25}, we conclude, as $n\rightarrow \infty$,
\begin{equation}
\label{eq:En2}
      \tilde{E}_{n,2} =[t^n] \mathcal{\tilde{E}}_2(t) \sim \frac{1}{3\gamma^7 \sqrt{\pi}} n^{\frac{5}{2}} \rho^{-n}.
\end{equation}
The first values of $\tilde{E}_{n,2}$ are calculated in Section \ref{sec:exactulggt}, and they appear in Table \ref{table:Etildng}. 

\subsection{Bivariate generating function}

As can be deduced from the cases of one and two galls, the generating function for unlabeled general galled trees with any specific number of galls $g \geq 1$, $\mathcal{\tilde{E}}_g(t)=\sum_{n=0}^\infty \tilde{E}_{n,g}t^n$, is dependent on the corresponding generating functions for galled trees with fewer galls.

Therefore, to find $\mathcal{\tilde{E}}_g(t)$, we first find the bivariate generating function, $\mathcal{\tilde{G}}(t,u)=\sum_{n=0}^\infty \sum_{m=0}^\infty \tilde{E}_{n,m}t^nu^m$, counting both the number of leaves and the number of galls. We obtain the generating function $\tilde{E}_g(t)$ for a fixed number of galls $g$ by noting $\cU(t)= \mathcal{\tilde{E}}_0(t) =  \mathcal{\tilde{G}}(t,0)$, and for $g \geq 1$,
\[
\mathcal{\tilde{E}}_g(t)=\rcp{g!}\bigg(\pdiff{}{u}{g}\tilde{\cG}\bigg)(t,0).
\]

In the symbolic method, general galled trees include the same components as  time-consistent galled trees \citep[eq.~(14)]{AgranatTamirEtAl25}, with the addition of a case that has a root gall and an empty sequence on one side of the reticulation node (Figure \ref{fig:ggtbv}). The symbolic method gives us 
\begin{equation}
   \mathcal{\tilde{G}} = \underbrace{\{ \square \}}_{1} \textrm{   } {\dot{\cup}} \textrm{   }
   \{ \circ \} \times \bigg[
   \underbrace{\textrm{MSET}_2(\mathcal{\tilde{G}})}_{2} \textrm{   } \dot{\cup} \textrm{   }
   \underbrace{\mu \times \mathcal{\tilde{G}} \times \textrm{MSET}_2\Big(\textrm{SEQ}^+(\mathcal{\tilde{G}})\Big)}_{3} \textrm{   } \dot{\cup} \textrm{   }
   \underbrace{\mu \times \mathcal{\tilde{G}} \times \textrm{SEQ}^+(\mathcal{\tilde{G}})}_{4}
   \bigg].
   \label{eq:bivariatesymbolic}
\end{equation}
The last component (4) is the one that appears in general galled trees but not in time-consistent galled trees. The generating function satisfies
\begin{equation}
      \mathcal{\tilde{G}}(t,u) = t
      +\frac{1}{2}[\mathcal{\tilde{G}}(t,u)^2+\mathcal{\tilde{G}}(t^2,u^2)]
      +\frac{u\mathcal{\tilde{G}}(t,u)}{2}\bigg[\bigg(\frac{\mathcal{\tilde{G}}(t,u)}{1-\mathcal{\tilde{G}}(t,u)}\bigg)^2+\frac{\mathcal{\tilde{G}}(t^2,u^2)}{1-\mathcal{\tilde{G}}(t^2,u^2)}\bigg] + \frac{u\mathcal{\tilde{G}}(t,u)^2}{1-\mathcal{\tilde{G}}(t,u)}. 
\label{eq:bivariategf}
\end{equation}

\begin{figure}[tb]
    \centering
    \includegraphics[width=6.8cm]{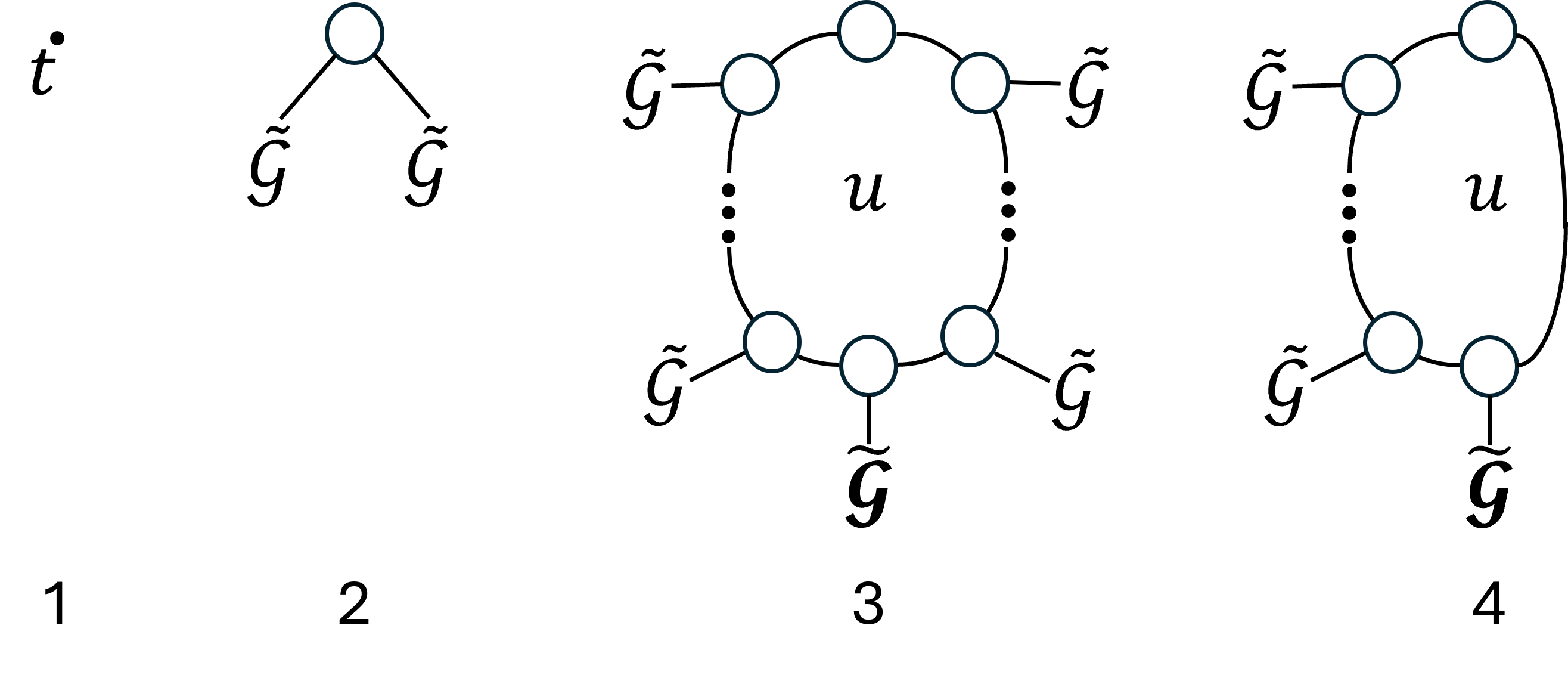}
    \vspace{-0.4cm}
    \caption{The four structures possible for a (non-plane) unlabeled general galled tree.}
    \label{fig:ggtbv}
\end{figure}

\subsection{Fixed number of galls} 
\label{sec:fixedulggt}

To find the univariate generating function for unlabeled general galled trees with $g$ galls, we must add the $g$th derivative at $u=0$ of the last term, ${u\mathcal{\tilde{G}}(t,u)^2}/[1-\mathcal{\tilde{G}}(t,u)]$, to the univariate generating function for unlabeled time-consistent galled trees~\citep[eqs.~(26)-(29)]{AgranatTamirEtAl25}. To take the required derivative, writing $\Du$ for $\frac{\partial}{\partial u}$, we use Leibniz's product rule: 
\begin{equation}
\Du^n(fh) = \sum_{k=0}^n {n \choose k} (\Du^k f)(\Du^{n-k} h).
\label{eq:leibniz}
\end{equation}
We also use Faà di Bruno's formula for derivatives of a composition:
\begin{equation}
\Du^m(f \circ h)=\sum_{k_1+2k_2+\ldots +mk_m=m} \frac{m!}{k_1! \, k_2! \, \cdots \, k_m!} \bigg[ \Dx^{k_1+k_2+\ldots
+k_m}\Big(f(x)\Big)\Big|_{x=h(u)} \bigg] \bigg[ \prod_{\ell=1}^m \bigg(\frac{\Du^{\ell} h}{\ell!}\bigg)^{k_\ell}\bigg].
\label{eq:fdb}
\end{equation}

We use Leibniz's product rule with $f(t,u)=u$, $h(t,u)=\mathcal{\tilde{G}}(t,u)^2 /[1-\mathcal{\tilde{G}}(t,u)]$, using $g$ in the role of $n$ (taking the $g$th derivative); at the value $u=0$, we notice that the only term that is not zero has $k=1$. We use Faà di Bruno's formula
with $f(x)=x^2/(1-x)$ and $h(u)=\tilde{\cG}(t,u)$, so that when $u=0$, $h({0})=\mathcal{U}(t)$:
\begin{align*} 
\rcp{g!} \Du^g\Bigg(u \frac{\tilde{\cG}(t,u)^2}{1-\tilde{\cG}(t,u)}\Bigg)\Bigg|_{u=0}&=
\rcp{(g-1)!} \Du^{g-1}\Bigg(\frac{\tilde{\cG}(t,u)^2}{1-\tilde{\cG}(t,u)}\Bigg)\Bigg|_{u=0} \nonumber \\
&=\sum_{k_1+2k_2+\ldots \atop +(g-1)k_{g-1}=g-1} \rcp{k_1!\, k_2!\cdots k_{g-1}!} 
\Dx^{k_1+k_2+\ldots+k_{g-1}}\bigg(\frac{x^2}{1-x}\bigg)\bigg|_{x=\cU(t)} \prod_{\ell=1}^{g-1} \tilde{\cE}_\ell(t)^{k_\ell} 
\end{align*} 

We then insert 
\begin{equation*} 
\rcp{k!}\Dx^k \bigg(\frac{x^2}{1-x}\bigg) = 
\begin{cases}
\frac{2x-x^2}{(1-x)^2} = \frac{1}{(1-x)^2}-1 & \text{ if } k=1,\\
\frac{1}{(1-x)^{k+1}} & \text{ if } k \geq 2.
\end{cases}
\end{equation*} 
The extra term in $\mathcal{\tilde{E}}_g(t)$ is 
\begin{equation}
    \sum_{k_1+2k_2+\ldots +(g-1)k_{g-1}=g-1} \binom{k_1+k_2+ \ldots +k_{g-1}}{k_1,k_2,\ldots,k_{g-1}} 
\Bigg(\frac{1}{\big[1-\cU(t)\big]^{(\sum_{i=1}^{g-1}k_i)+1}}-\delta_{1,\,\sum_{i=1}^{g-1}k_i}\Bigg)
 \prod_{m=1}^{g-1} \tilde{\cE}_m(t)^{k_m}.  \nonumber \\
\label{eq:extratermggt}
\end{equation}

Adding the extra term to the expression in eqs.~(26)-(29) of \cite{AgranatTamirEtAl25}, we get 
\small
\begin{align}
\tilde{\cE}_g(t) &= \frac{1}{1-\cU(t)}\bigg[\Big( \rcp2\sum_{\ell=1}^{g-1} \tilde{\cE}_\ell(t) \, \tilde{\cE}_{g-\ell}(t) \Big) +\delta_{0,g \textrm{ mod }2}\rcp2\tilde{\cE}_{\frac g2}(t^2) \nonumber \\
& \quad +\rcp2\sum_{k_1+2k_2+\ldots +(g-1)k_{g-1}=g-1} \binom{k_1+k_2+ \ldots +k_{g-1}}{k_1,k_2,\ldots,k_{g-1}} 
\Bigg(\frac{3\cU(t)+(\sum_{i=1}^{g-1}k_i)-2}{\big[1-\cU(t)\big]^{(\sum_{i=1}^{g-1}k_i)+2}}+\delta_{1,\,\sum_{i=1}^{g-1}k_i}\Bigg) 
\prod_{m=1}^{g-1} \tilde{\cE}_m(t)^{k_m} \nonumber \\ 
& \quad +\rcp2\sum_{b=1}^{\big\lfloor\frac{ g-1}{2} \big\rfloor} \tilde{\cE}_{(g-2b)-1}(t) \sum_{2r_2+4r_4+\ldots +(2b)r_{2b}=2b} \binom{r_1+r_2+\ldots +r_{2b}}{r_1,r_2,\ldots,r_{2b}} 
\rcp{\big[1-\cU(t^2)\big]^{(\sum_{i=1}^{b}r_{2i})+1}}  
\prod_{m=1}^{ b}
\tilde{\cE}_m(t^2)^{r_{2m}}  \nonumber \\ 
& \quad +\rcp2 \tilde{\cE}_{g-1}(t)\frac{\cU(t^2)}{1-\cU(t^2)} \nonumber \\
& \quad +\sum_{k_1+2k_2+\ldots +(g-1)k_{g-1}=g-1} \binom{k_1+k_2+ \ldots +k_{g-1}}{k_1,k_2,\ldots,k_{g-1}} 
\Bigg(\frac{1}{\big[1-\cU(t)\big]^{(\sum_{i=1}^{g-1}k_i)+1}}-\delta_{1,\,\sum_{i=1}^{g-1}k_i}\Bigg)
\prod_{m=1}^{g-1} \tilde{\cE}_m(t)^{k_m}\bigg], \nonumber
\end{align}
\normalsize
or 
\small
\begin{align}
\tilde{\cE}_g(t) &= \frac{1}{1-\cU(t)}\bigg[\Big( \rcp2\sum_{\ell=1}^{g-1} \tilde{\cE}_\ell(t) \, \tilde{\cE}_{g-\ell}(t) \Big) +\delta_{0,g \textrm{ mod }2}\rcp2\tilde{\cE}_{\frac g2}(t^2) 
\nonumber  \\
& \quad +\sum_{k_1+2k_2+\ldots +(g-1)k_{g-1}=g-1} \binom{k_1+k_2+ \ldots +k_{g-1}}{k_1,k_2,\ldots,k_{g-1}} \nonumber \\
& \qquad \times \Bigg(\frac{3\cU(t)+(\sum_{i=1}^{g-1}k_i)-2}{2\big[1-\cU(t)\big]^{(\sum_{i=1}^{g-1}k_i)+2}}
+\frac{1}{\big[1-\cU(t)\big]^{(\sum_{i=1}^{g-1}k_i)+1}}
-\rcp{2}\delta_{1,\,\sum_{i=1}^{g-1}k_i}\Bigg) \prod_{m=1}^{g-1} \tilde{\cE}_m(t)^{k_m} \nonumber \\
& \quad +\rcp2\sum_{b=1}^{\big\lfloor\frac{ g-1}{2} \big\rfloor} \tilde{\cE}_{(g-2b)-1}(t) \sum_{2r_2+4r_4+\ldots +(2b)r_{2b}=2b} \binom{r_1+r_2+\ldots +r_{2b}}{r_1,r_2,\ldots,r_{2b}} 
\rcp{\big[1-\cU(t^2)\big]^{(\sum_{i=1}^{b}r_{2i})+1}} \prod_{m=1}^{ b}
\tilde{\cE}_m(t^2)^{r_{2m}} \label{eq:eg3a} \nonumber \\ 
& \quad +\rcp2 \tilde{\cE}_{g-1}(t)\frac{\cU(t^2)}{1-\cU(t^2)}.
\end{align}
\normalsize

Writing $\ell=\sum_{i=1}^{g-1}k_i$ and using eq.~(\ref{eq:utasymp}), we notice that $[3\cU(t)+\ell-2]/\big[2\big(1-\cU(t)\big)^{\ell+2}\big]\sim (\ell+1)/[{2}\gamma^{\ell+2}(1-t/\rho)^{\ell/2+1}]$ and $1/[1-\cU(t)]^{\ell+1} \sim 1 /[\gamma^{\ell+1}(1-t/\rho)^{{(\ell+1)/2}}]$. The term $\delta_{1,\ell}$ is a constant with respect to $t$ and $\ell>0$. In addition, as shown in Section \ref{sec:ggt1gall}, $\mathcal{\tilde{E}}_1(t) \sim \mathcal{E}_1(t)$, and inductively we can conclude that for all $m$, $0 \leq m < g$, $\mathcal{\tilde{E}}_m(t) \sim \mathcal{E}_m(t)$. Therefore, the term unique to the generating function of unlabeled general galled trees does not contribute to the asymptotics of the function. Hence, the asymptotic analysis is identical to that of unlabeled time-consistent galled trees \citep[Theorem 10]{AgranatTamirEtAl24b}, so that as $n \rightarrow \infty$, 
\begin{equation}
\label{eq:Eng}
\tilde{E}_{n,g} =[t^n] \mathcal{\tilde{E}}_g(t) \sim \frac{2^{2g-1}}{(2g)! \, \gamma^{4g-1} \sqrt{\pi} } n^{2g-\frac{3}{2} } \rho^{-n}. 
\end{equation}
Values of $\tilde{E}_{n,g}$ for small $n$ and $g$ are calculated in Section \ref{sec:exactulggt}, and they appear in Table \ref{table:Etildng}. 

\subsection{Arbitrary number of galls} 
\label{sec:allulggt}

Fuchs \& Gittenberger \cite{FuchsAndGittenberger25} analyzed unlabeled general galled trees with all possible numbers of galls; we derive the generating function for completion. There are four possible structures (Figure \ref{fig:ggtbv}): (1) the tree is a single node; (2) it has no root gall and two galled subtrees; (3) it has a root gall in which the sequences leading to the reticulation node are both nonempty, and from all nodes in the root gall, except the root, galled trees descend; (4) it has a root gall in which one of the sequences leading to the reticulation node is empty, and from all nodes in the root gall, except the root, galled trees descend. In symbols,
\begin{equation}
    \label{eq:atul}
    \mathcal{\tilde{A}} = \underbrace{\{\square\}}_{1} 
\textrm{   }\dot{\cup}\textrm{   } \{\circ\} \, \times \big[\underbrace{\text{MSET}_2(\mathcal{\tilde{A}})}_{2} 
\textrm{   } \dot{\cup} \textrm{   } \underbrace{\mathcal{\tilde{A}}\times\text{MSET}_2 \big(\text{SEQ}^+(\mathcal{\tilde{A}}) \big)}_{3}   
\textrm{   } \dot{\cup} \textrm{   } \underbrace{\mathcal{\tilde{A}}\times\text{SEQ}^+(\mathcal{\tilde{A}})}_{4}  \big].
\end{equation}

Converting to a generating function $\mathcal{\tilde{A}} = \sum_{n=0}^\infty \tilde{A}_n t^n$,
\begin{equation}
\mathcal{\tilde{A}}(t) = t + \frac{1}{2}[\mathcal{\tilde{A}}(t)^2 + \mathcal{\tilde{A}}(t^2)] + \frac{\mathcal{\tilde{A}}(t)}{2}\bigg[\bigg(\frac{\mathcal{\tilde{A}}(t)}{1 - \mathcal{\tilde{A}}(t)}\bigg)^2 + \frac{\mathcal{\tilde{A}}(t^2)}{1 - \mathcal{\tilde{A}}(t^2)}\bigg] + \frac{\mathcal{\tilde{A}}(t)^2}{1 - \mathcal{\tilde{A}}(t)}.
\label{eq:atulgf}
\end{equation}
Eq.~(\ref{eq:atulgf}) matches eq.~(28) of \cite{FuchsAndGittenberger25}, which asymptotically showed
\begin{equation}
\tilde{A}_n \sim \frac{0.19659...}{2\sqrt{\pi}} n^{-\frac{3}{2}} (0.11647...)^{-n}.
\label{eq:fg}
\end{equation}

The first values of $\tilde{A}_n$ are calculated in Section \ref{sec:exactulggt} as the sum $\sum_{g=0}^{n-1}\tilde{E}_{n,g}$. They appear in Table {\ref{table:Etildng}}.

\section{Leaf-labeled general galled trees} 
\label{sec:llggt}

The derivations of generating functions for leaf-labeled general galled trees are similar to those of unlabeled general galled trees. The general galled trees have an additional component, compared to time-consistent galled trees, which does not affect the asymptotics.

\subsection{No galls}
\label{sec:nogalls}

Leaf-labeled general galled trees with no galls are the same as leaf-labeled time-consistent galled trees with no galls. Both are leaf-labeled rooted binary trees. The exponential generating function satisfies~\citep[p.~129]{FlajoletAndSedgewick09}:
\begin{equation}
    \mathfrak{U}(t) = \sum_{n=0}^\infty \frac{u_n}{n!}t^n = t + \frac{1}{2}\mathfrak{U}(t)^2,
    \label{eq:utll}
\end{equation}
where the $u_n$ count leaf-labeled trees with $n$ leaves. The $\{u_n\}_{n\geq2}$ satisfy $u_n=(2n-3)!!$ and $u_n=1$ \citep{EdwardsAndCSforza64, Felsenstein78} The factor of $\frac{1}{2}$ in eq.~(\ref{eq:utll}) arises because the trees are non-plane. The exponential generating function is
\begin{equation} \label{eq:U-labeled}
\mathfrak{U}(t) = 1-\sqrt{1-2t},
\end{equation}
so that $\frac{1}{2}$ is the radius of convergence. The equation has the same form as the corresponding generating function for unlabeled rooted binary trees as $t \rightarrow \rho^-$ (eq.~(\ref{eq:utasymp})), substituting $\rho=\frac{1}{2}$ and $\gamma=1$.

To obtain the asymptotic approximation to $u_n$, as $n \rightarrow \infty$, we apply Figure VI.5 of \cite{FlajoletAndSedgewick09}, producing
\begin{equation}
u_n \sim \frac{1}{2\sqrt{\pi}} n^{-\frac{3}{2}} \Big(\frac{1}{2}\Big)^{-n}n!\sim \frac{\sqrt{2}}{2}n^{n-1}\Big(\frac{2}{e}\Big)^n,
\end{equation}
where the last approximation follows by Stirling's approximation to $n!$. The $u_n$ follow OEIS A001147. The first values of $u_n$ appear in Table \ref{table:etildng}.

\subsection{One gall} 
\label{sec:llggt1g}

The structure of leaf-labeled general galled trees follows that of unlabeled general galled trees. From Figure \ref{fig:ggtonegall}, in the notation for the symbolic method for labeled classes \cite[p.~148]{FlajoletAndSedgewick09}:
\begin{equation}
\mathfrak{\tilde{E}}_1 =  \textrm{   }  \textrm{   } 
    \{\circ\} \times    \bigg[\underbrace{\mathfrak{U}\star\mathfrak{\tilde{E}}_1}_{1} \textrm{   } \dot{\cup} \textrm{   } \underbrace{\mathfrak{\tilde{U}}\star \textrm{SET}_2\big(\textrm{SEQ}^+(\mathfrak{\tilde{U}})\big)}_{2}\textrm{   } \dot{\cup}\textrm{   }
   \underbrace{\mathfrak{\tilde{U}} \star \textrm{SEQ}^+(\mathfrak{\tilde{U}})}_{3}
    \bigg].
\end{equation}
The exponential generating function $\mathfrak{\tilde{E}}_1(t) = \sum_{n=0}^\infty (\tilde{e}_{n,1} /n! ) t^n$ is
\begin{align}
    \mathfrak{\tilde{E}}_1(t) &= \mathfrak{\tilde{U}}(t) \, \mathfrak{\tilde{E}}_1(t) + \frac{\mathfrak{\tilde{U}}(t)}{2} \bigg ( \frac{\mathfrak{\tilde{U}}(t)}{1-\mathfrak{\tilde{U}}(t)} \bigg )^2 + \frac{\mathfrak{\tilde{U}}(t)^2}{1-\mathfrak{\tilde{U}}(t)} \nonumber \\
     &= \frac{\mathfrak{\tilde{U}}(t)^3}{2[1-\mathfrak{\tilde{U}}(t)]^3} + \frac{\mathfrak{\tilde{U}}(t)^2}{[1-\mathfrak{\tilde{U}}(t)]^2}.
     \label{eq:llggtonegall}
\end{align}

Comparing eq.~(\ref{eq:llggtonegall}) to $\mathfrak{E}_1(t)=\sum_{n=0}^{\infty} (e_{n,1}/n! ) t^n$, the exponential generating function of leaf-labeled time-consistent galled trees with one gall (eq.~(37) in \cite{AgranatTamirEtAl25}), eq.~(\ref{eq:llggtonegall}) has the additional term ${\mathfrak{\tilde{U}}(t)^2}/{[1-\mathfrak{\tilde{U}}(t)]^2}$. As in the unlabeled case (Section \ref{sec:ggt1gall}), the extra term is not of highest order and hence, as $t \rightarrow \frac{1}{2}^-$, $\mathfrak{\tilde{E}}_1(t) \sim \mathfrak{E}_1(t)$. 

From eq.~(40) in \cite{AgranatTamirEtAl25}, as $n\rightarrow \infty$,
\begin{equation}
    \tilde{e}_{n,1}  = [t^n]\mathfrak{\tilde{E}}_1(t) \, n! \sim  \frac{1}{\sqrt{\pi}} n^{\frac{1}{2}} \Big(\frac{1}{2}\Big)^{-n}n!
    \sim \sqrt{2}n^{n+1}\Big(\frac{2}{e}\Big)^n. \label{eq:en1asy}
\end{equation}
The first values of $\tilde{e}_{n,1}$ are calculated in Section \ref{sec:exactllggt}. They appear in Table \ref{table:etildng}.

\subsection{Two galls} 
\label{sec:llggt2g}

As in the case of one gall, the structure of a leaf-labeled general galled tree with two galls follows that of a corresponding unlabeled general galled tree. From Figure \ref{fig:ggttwogalls}, we get in the leaf-labeled case:
\begin{align}
\mathcal{\tilde{\mathfrak{E}}}_2 & = \{\circ\} \times 
\big[\underbrace{\mathfrak{U} \star \mathfrak{\tilde{E}}_2}_{1} 
\textrm{   }\dot{\cup}\textrm{   }
\underbrace{\text{SET}_2(\mathfrak{\tilde{E}}_1)}_{2} 
\textrm{   }\dot{\cup}\textrm{   }
\underbrace{\mathfrak{U} \star \big(\text{SEQ}(\mathfrak{U}) \star \mathfrak{\tilde{E}}_1 \star \text{SEQ}(\mathfrak{U})\big) \star \text{SEQ}^+(\mathfrak{U})}_{3}
\textrm{   }\dot{\cup} \textrm{   }
\underbrace{\mathfrak{\tilde{E}}_1 \star \text{SET}_2 \big(\text{SEQ}^+(\mathfrak{U}) \big)}_{4} \nonumber \\ &  
\quad \textrm{   }\dot{\cup} \textrm{   }\underbrace{\mathfrak{U} \star \text{SEQ}(\mathfrak{U}) \star \mathfrak{\tilde{E}}_1 \star \text{SEQ}(\mathfrak{U})}_{5}
\textrm{   }\dot{\cup} \textrm{   }
\underbrace{\mathfrak{\tilde{E}}_1 \star \text{SEQ}^+(\mathfrak{U})}_{6}\big].
\label{eq:llsmtwogalls}
\end{align}

Converting to an exponential generating function $\tilde{\mathfrak{E}}_2(t) = \sum_{n=0}^\infty (\tilde{e}_{n,2}/n!)t^n$,
$$\mathfrak{\tilde{E}}_2(t) = \mathfrak{U}(t) \, \mathfrak{\tilde{E}}_2(t) 
+ \frac{1}{2}\mathfrak{\tilde{E}}_1(t)^2 
+ \frac{\mathfrak{U}(t)^2 \, \mathfrak{\tilde{E}}_1(t)}{[1 - \mathfrak{U}(t)]^3} 
+ \frac{\mathfrak{\tilde{E}}_1(t)}{2}\bigg(\frac{\mathfrak{U}(t)}{1 - \mathfrak{U}(t)}\bigg)^2 
+ \frac{\mathfrak{U}(t) \, \mathfrak{\tilde{E}}_1(t)}{[1 - \mathfrak{U}(t)]^2} 
+ \frac{\mathfrak{U}(t) \, \mathfrak{\tilde{E}}_1(t)}{1 - \mathfrak{U}(t)}.$$
Solving for $\mathfrak{\tilde{E}}_2(t)$, we have
\begin{equation}
\mathfrak{\tilde{E}}_2(t) = \frac{\mathfrak{\tilde{E}}_1(t)^2}{2[1 - \mathfrak{U}(t)]} 
+ \frac{\mathfrak{U}(t)^2 \, \mathfrak{\tilde{E}}_1(t)}{[1 - \mathfrak{U}(t)]^4}
+ \frac{\mathfrak{U}(t)^2 \, \mathfrak{\tilde{E}}_1(t)}{2[1 - \mathfrak{U}(t)]^3} 
+ \frac{\mathfrak{U}(t) \, \mathfrak{\tilde{E}}_1(t)}{[1 - \mathfrak{U}(t)]^3}
+ \frac{\mathfrak{U}(t) \, \mathfrak{\tilde{E}}_1(t)}{[1 - \mathcal{U}(t)]^2}.    
\end{equation}

In comparison with $\mathfrak{E}_2(t) = \sum_{n=0}^\infty (e_{n,2}/n!) t^n$, the exponential generating function for the leaf-labeled time-consistent galled trees with two galls~\citep[eq.~(43)]{AgranatTamirEtAl25}, the generating function for the leaf-labeled general galled trees with two galls contains two terms that arise from cases 5 and 6 in Figure \ref{fig:ggttwogalls}. In the same manner as in the unlabeled case (Section \ref{sec:twogalls}), neither is a term of highest degree. Therefore, asymptotically, as $t \rightarrow \frac{1}{2}^-$, $\mathfrak{\tilde{E}}_2(t) \sim \mathfrak{E}_2(t)$. 

From eq.~(44) of \cite{AgranatTamirEtAl25}, as $n\rightarrow \infty$,
\begin{equation}
    \tilde{e}_{n,2} = [t^n]\mathfrak{\tilde{E}_2(t)} \, n! \sim  \frac{1}{3\sqrt{\pi}} n^{\frac{5}{2}} \Big(\frac{1}{2}\Big)^{-n}n! \sim \frac{\sqrt{2}}{3}n^{n+3}\Big(\frac{2}{e}\Big)^n. \label{eq:en2asy}
\end{equation}
The first values of $\tilde{e}_{n,2}$ are calculated in Section \ref{sec:exactllggt} and they appear in Table \ref{table:etildng}.

\subsection{Bivariate exponential generating function}

We follow the same procedure we used in the unlabeled case and arrive at the exponential generating function that counts leaf-labeled general galled trees with specific numbers of galls. We first find the bivariate exponential generating function $\mathfrak{\tilde{G}}(t,u) = \sum_{n=0}^\infty \sum_{m=0}^\infty (\tilde{e}_{n,g}/n! ) t^n u^m$, counting both leaves and galls. From the structure in Figure \ref{fig:ggtbv}, the symbolic method yields
\begin{equation}
\mathfrak{\tilde{G}} = \underbrace{\{\square\}}_{1} 
\textrm{   } \dot{\cup} \textrm{   } 
    \{\circ\} \times  \bigg[\underbrace{\textrm{SET}_2(\mathfrak{\tilde{G}})}_{2} 
    \textrm{   } \dot{\cup} \textrm{   } \underbrace{\mu\times\mathfrak{\tilde{G}}\star \textrm{SET}_2\big(\textrm{SEQ}^+(\mathfrak{\tilde{G}})\big)}_{3}
    \textrm{   } \dot{\cup} \textrm{   }
   \underbrace{\mu \times \mathfrak{\tilde{G}} \star \textrm{SEQ}^+(\mathfrak{\tilde{G}})}_{4}
    \bigg].
\end{equation}
The additional term (component 4) compared to leaf-labeled time-consistent galled trees (eq.~(48) of \cite{AgranatTamirEtAl25}) is similar to that in the unlabeled case (eq.~(\ref{eq:bivariatesymbolic})).
We therefore have 
\begin{equation} \label{eq:llggtbvf}
      \mathfrak{\tilde{G}}(t,u) = t
      +\frac{1}{2}\mathfrak{\tilde{G}}(t,u)^2
      +\frac{{u}\mathfrak{\tilde{G}}(t,u)}{2}\bigg(\frac{\mathfrak{\tilde{G}}(t,u)}{1-\mathfrak{\tilde{G}}(t,u)}\bigg)^2 
      + \frac{u\mathfrak{\tilde{G}}(t,u)^2}{1-\mathfrak{\tilde{G}}(t,u)}.
\end{equation}

\subsection{Fixed number of galls} 
\label{sec:fixedllggt}

The exponential generating function $\mathfrak{\tilde{E}}_g(t)=\sum_{n=0}^{\infty} (\tilde{e}_{n,g}/n!)t^n$ that counts leaf-labeled general galled trees is the sum of the generating function of leaf-labeled time consistent galled trees $\mathfrak{E}_g(t)$ and the last term in eq.~(\ref{eq:llggtbvf}), ${u\mathfrak{\tilde{G}}(t,u)^2}/{[1-\mathfrak{\tilde{G}}(t,u)]}$. We have already calculated the $g$th derivative of this extra term at $u=0$ (Section \ref{sec:fixedulggt}); hence, combining that calculation (eq.~(\ref{eq:extratermggt})) with eq.~(51) of \cite{AgranatTamirEtAl25}, we have
\small
\begin{align}
  \mathfrak{\tilde{E}}_g(t) &= \frac{1}{1-\mathfrak{U}(t)}\bigg[\frac{1}{2}\sum_{\ell=1}^{g-1} \mathfrak{\tilde{E}}_\ell(t) \, \mathfrak{\tilde{E}}_{g-\ell}(t) \nonumber\\  
  & 
\quad + \frac{1}{2}\sum_{k_1+2k_2+\ldots +(g-1)k_{g-1}=g-1} \binom{k_1+k_2+\ldots +k_{g-1}}{k_1,k_2,\ldots,k_{g-1}} \bigg(\frac{3\mathfrak{U}(t)+(\sum_{i=1}^{g-1}k_i)-2}{[1-\mathfrak{U}(t)]^{(\sum_{i=1}^{g-1}k_i)+2}}+\delta_{1,\,\sum_{i=1}^{g-1}k_i}\Bigg)
\prod_{\ell=1}^{g-1} \mathfrak{\tilde{E}}_\ell(t)^{k_\ell} \nonumber \\
&  \quad + \sum_{k_1+2k_2+\ldots +(g-1)k_{g-1}=g-1} \binom{k_1+k_2+ \ldots +k_{g-1}}{k_1,k_2,\ldots,k_{g-1}} 
\Bigg(\frac{1}{\big[1-\mathfrak{U}(t)\big]^{(\sum_{i=1}^{g-1}k_i)+1}}-\delta_{1,\,\sum_{i=1}^{g-1}k_i}\Bigg)
\prod_{m=1}^{g-1} \mathfrak{\tilde{E}}_m(t)^{k_m}  \bigg], \nonumber
\end{align}
\normalsize
or 
\begin{align}
    \mathfrak{\tilde{E}}_g(t) &= \frac{1}{1-\mathfrak{U}(t)}\bigg[\bigg( \rcp2\sum_{\ell=1}^{g-1} \mathfrak{\tilde{E}}_\ell(t) \, \mathfrak{\tilde{E}}_{g-\ell}(t) \bigg) \nonumber \\
& \quad +\sum_{k_1+2k_2+\ldots +(g-1)k_{g-1}=g-1} \binom{k_1+k_2+ \ldots +k_{g-1}}{k_1,k_2,\ldots,k_{g-1}} \nonumber \\
& \quad \times \Bigg(\frac{3\cU(t)+(\sum_{i=1}^{g-1}k_i)-2}{2\big[1-\cU(t)\big]^{(\sum_{i=1}^{g-1}k_i)+2}}
+\frac{1}{\big[1-\cU(t)\big]^{(\sum_{i=1}^{g-1}k_i)+1}}
-\rcp{2}\delta_{1,\,\sum_{i=1}^{g-1}k_i}\Bigg) \prod_{m=1}^{g-1} \mathfrak{\tilde{E}}_m(t)^{k_m}. 
\end{align}

As in the unlabeled case (Section \ref{sec:fixedulggt}), we inductively obtain that for all $m$, $0 \leq m < g$,  $\mathfrak{\tilde{E}}_m (t) \sim \mathfrak{E}_m (t)$; the extra term in the exponential generating function for leaf-labeled general galled trees compared to leaf-labeled time-consistent galled trees does not contribute to the asymptotics. The asymptotic analysis of $\mathfrak{\tilde{E}}_g(t)$ is identical to that of $\mathfrak{E}_g(t)$ \citep[eqs.~(52)-(53)]{AgranatTamirEtAl25}: as $n\rightarrow \infty$, 
\begin{align}
\label{eq:eng}
     \mathfrak{E}_g(t) &\sim \frac{(4g-3)!!}{(2g)! \, (1-2t)^{2g-\frac{1}{2}}}, \\
    \tilde{e}_{n,g} &\sim \frac{2^{2g-1}}{(2g)!\sqrt{\pi}}n^{2g-\frac{3}{2}}\Big(\frac{1}{2}\Big)^{-n}n! \sim \frac{2^{2g-1}\sqrt{2}}{(2g)!}\Big(\frac{2}{e}\Big)^n n^{n+2g-1}. \label{eq:identical}
\end{align}
Values of $\tilde{e}_{n,g}$ for small $n$ and $g$ are calculated in Section \ref{sec:exactllggt}. They appear in Table \ref{table:etildng}.

\subsection{Arbitrary number of galls} 
\label{sec:allllggt}

As in the case of unlabeled general galled trees (Section \ref{sec:allulggt}), the analysis of leaf-labeled general galled trees with any possible number of galls has been considered by  \cite{FuchsAndGittenberger25} and earlier by \cite{BouvelEtAl20}. The tree structure in the leaf-labeled case follows the unlabeled case (Figure \ref{fig:ggtbv}), and the symbolic method gives
\begin{equation}
    \mathfrak{\tilde{A}} = \underbrace{\{\square\}}_{1} 
\textrm{   }\dot{\cup}\textrm{   } \{\circ\} \times \big[\underbrace{\text{SET}_2(\mathfrak{\tilde{A}})}_{2} 
\textrm{   }\dot{\cup}\textrm{   } \underbrace{\mathfrak{\tilde{A}}\star\text{SET}_2 \big(\text{SEQ}^+(\mathfrak{\tilde{A}}) \big)}_{3}   
\textrm{   }\dot{\cup}\textrm{   } \underbrace{\mathfrak{\tilde{A}}\star\text{SEQ}^+(\mathfrak{\tilde{A}})}_{4}  \big].
\label{eq:atll}
\end{equation}
Converting to an exponential generating function $\mathfrak{\tilde{A}}(t) = \sum_{n=0}^\infty (\tilde{a}_n/n!)t^n$, we have
\begin{equation}
\mathfrak{\tilde{A}}(t) = t + \frac{1}{2}\mathfrak{\tilde{A}}(t)^2 + \frac{\mathfrak{\tilde{A}}(t)}{2} \bigg(\frac{\mathfrak{\tilde{A}}(t)}{1 - \mathfrak{\tilde{A}}(t)}\bigg)^2 + \frac{\mathfrak{\tilde{A}}(t)^2}{1 - \mathfrak{\tilde{A}}(t)}.
\label{eq:atllgf}
\end{equation}
Eq.~(\ref{eq:atllgf}) is the same as eq.~(2) in \cite{FuchsAndGittenberger25}.

From Proposition 6 of \cite{BouvelEtAl20} and eq.~(3) of \cite{FuchsAndGittenberger25}, as $n\rightarrow \infty$, 
\begin{equation}
    \label{eq:sqrt17}
\tilde{a}_n \sim \frac{(17-\sqrt{17})\sqrt{2}}{136}n^{n-1} \Big(\frac{8}{e}\Big)^n.
\end{equation}
The first values of $\tilde{a}_n$ are calculated in Section \ref{sec:exactllggt} as the sum $\sum_{g=0}^{n-1}\tilde{e}_{n,g}$. They appear in Table {\ref{table:etildng}}.

\section{Unlabeled simplex time-consistent galled trees} \label{sec:ul1tcgt}

We study the same series of problems for the unlabeled simplex time-consistent galled trees that we examined for unlabeled general galled trees in Section \ref{sec:ulggt}. Let $\mathcal{E}_g^1(t)=\sum_{n=0}^\infty E^1_{n,g}t^n$ denote the generating function counting the number of unlabeled simplex galled trees with exactly $g$ galls, with $n$ leaves ($E^1_{n,g}$). As in the case of general galled trees, we examine the special cases of $g=0$, 1, and 2. We derive a bivariate generating function $\mathcal{G}^1(t,u) = \sum_{n=0}^\infty \sum_{m=0}^\infty E^1_{n,m} t^n u^m$, from which we analyze the growth of sequences with any fixed number of galls $g$. We also derive a generating function 
$\mathcal{A}^1(t) = \sum_{n=0}^\infty A_n^1 t^n$, counting unlabeled simplex time-consistent galled trees with $n$ leaves ($A_n^1$), summing over all possible numbers of galls.

\subsection{No galls}
\label{sec:1compnogalls}

The unlabeled simplex time-consistent galled trees with no galls, are, like the unlabeled time-consistent galled trees with no galls and the unlabeled general galled trees with no galls, simply the unlabeled rooted binary trees studied in Section \ref{sec:ggt0galls}. 

\subsection{One gall} 
\label{sec:1tcgt1g}

If an unlabeled simplex time-consistent galled tree contains exactly one gall, then as in Section \ref{sec:ggt1gall}, we split into cases depending on whether or not the root node is part of the gall (Figure \ref{fig:1tcgtonegall}). If the root is not part of the gall, then it has two subtrees, a tree with no galls and a simplex time-consistent galled tree with one gall (component 1 in eq.~(\ref{eq:e11tsm}) and Figure \ref{fig:1tcgtonegall}). If the root is part of the gall (component 2), then two non-empty sequences of trees with no galls lead from the root to the reticulation node, from which a single leaf descends, due to the simplex condition. The sequences are non-empty as a result of the time-consistency condition.

\begin{figure}[tb]
    \centering
    \includegraphics[width=4.5cm]{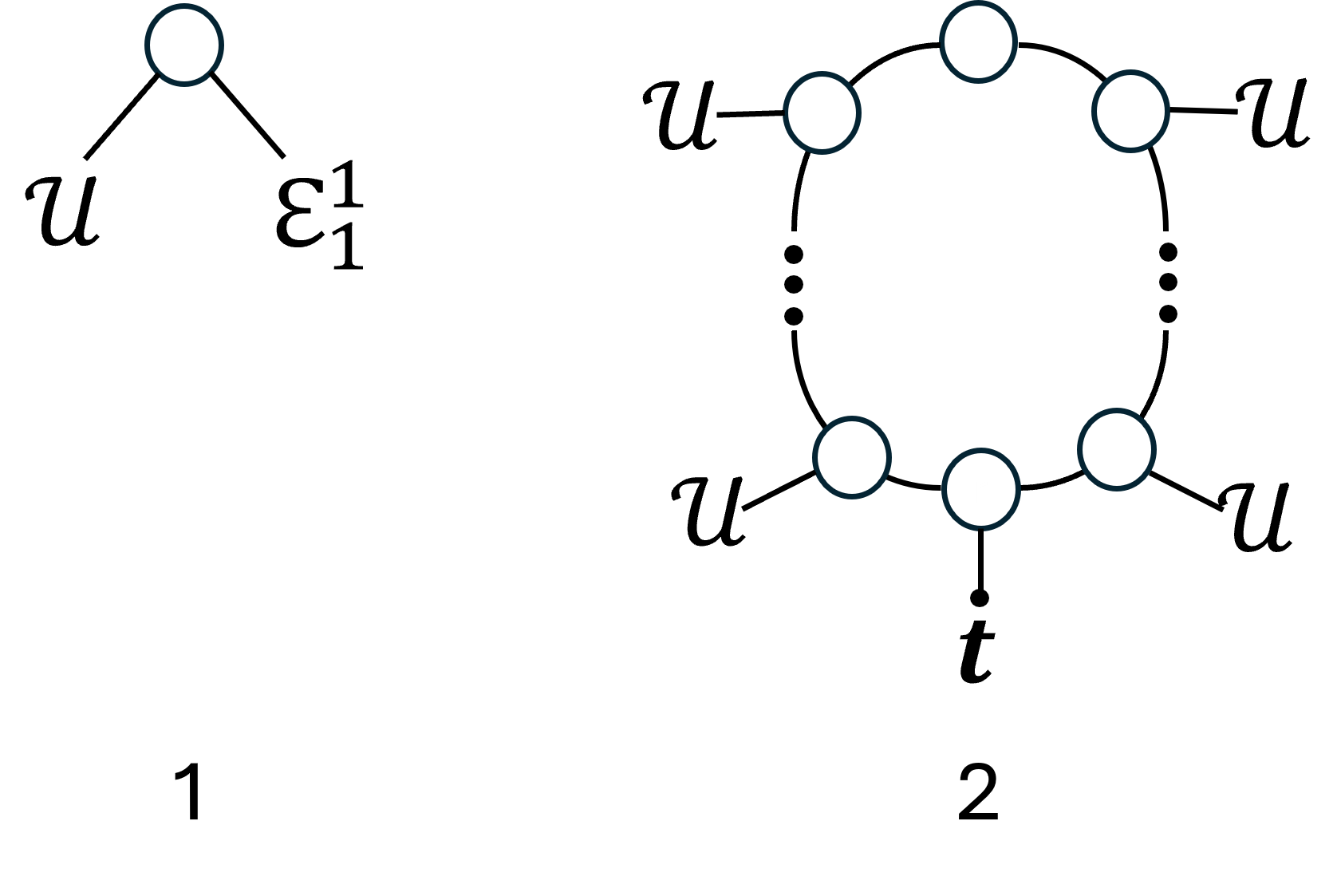}
    \vspace{-0.3cm}
    \caption{The two structures possible for a (non-plane) unlabeled simplex time-consistent galled tree with exactly one gall.}
    \label{fig:1tcgtonegall}
\end{figure}

In symbolic notation, we get 
\begin{equation}
\mathcal{E}_1^1 = \{\circ\} \times \Big[\underbrace{\mathcal{U} \times \mathcal{E}_1^1}_{1} 
\textrm{  }\dot{\cup}\textrm{  }
\underbrace{\{\square\} \times \text{MSET}_2 \big(\text{SEQ}^+(\mathcal{U})\big)}_{2} \Big].
    \label{eq:e11tsm}
\end{equation}
Converting to a generating function $\mathcal{E}_1^1(t) = \sum_{n=0}^\infty E_{n,1}^1 t^n$  yields 
$$\mathcal{E}_1^1(t) = \mathcal{U}(t) \, \mathcal{E}_1^1(t) + \frac{t}{2} \bigg[ \bigg( \frac{\mathcal{U}(t)}{1 - \mathcal{U}(t)}\bigg)^2 + \frac{\mathcal{U}(t^2)}{1 - \mathcal{U}(t^2)}\bigg].$$ 
Isolating $\mathcal{E}_1^1(t)$ gives 
\begin{equation}
\mathcal{E}_1^1(t) = \frac{t \, \mathcal{U}(t)^2}{2[1 - \mathcal{U}(t)]^3} + \frac{t \, \mathcal{U}(t^2)}{2[1 - \mathcal{U}(t)] \, [1 - \mathcal{U}(t^2)]}.
\end{equation}

The $t \, \mathcal{U}(t)^2 / \big( 2[1-\mathcal{U}(t)]^3\big)$ term dominates the asymptotic expression for the generating function $\mathcal{E}_1^1(t)$, as $t \rightarrow \rho^{-}$. As $t \rightarrow \rho^{-}$, we have $1 - \mathcal{U}(t) \sim \gamma (1 - t/\rho)^{\frac{1}{2}}$  (eq.~(\ref{eq:utasymp})) and $\mathcal{U}(t) \rightarrow 1$, so that as $t \rightarrow \rho^{-}$, 
\begin{equation}
\label{eq:e11tul}
    \mathcal{E}^1_1(t) \sim \frac{\rho}{2\gamma^3(1 - \frac{t}{\rho})^{\frac{3}{2}}}.
\end{equation}

To derive the growth of the coefficients $E_{n,1}^1$, we use the transfer theorems summarized in eq.~(1) in Section VII.1 of \cite{FlajoletAndSedgewick09} (p.~441; see also Corollary VI.1, p.~392): if $f(t) = \sum_{n=0}^\infty f_nt^n$ for a function $f$ that is $\Delta$-analytic (a technical condition that holds in all our applications), $f(t) \sim (1-t/\nu)^{-\alpha}$ as $t \rightarrow \nu$ for $t \in \Delta$, and $\alpha$ is not a non-positive integer, then
\begin{equation}
f_n = [t^n] f(t) \sim \frac{1}{\Gamma(\alpha)}n^{\alpha-1}\nu^{-n}.
    \label{eq:transferth}
\end{equation}
Using eq.~(\ref{eq:transferth}) with eq.~(\ref{eq:e11tul}), as $n \rightarrow \infty$, we get
\begin{equation}
\label{eq:e1nonegall}
 E_{n,1}^1 = [t^n] \mathcal{E}^1_1(t) \sim \frac{\rho}{2\gamma^3}\frac{n^{\frac{1}{2}}\rho^{-n}}{\Gamma(\frac{3}{2})} = \frac{1} {\gamma^3\sqrt{\pi}}n^{\frac{1}{2}}\rho^{-n+1},
\end{equation}
where we note that $\Gamma(\frac{3}{2}) = \sqrt{\pi}/{2}$. Via eq.~(\ref{eq:En1}), the asymptotic expression for $E_{n,1}^1$ satisfies
$$E_{n,1}^1 \sim \rho E_{n,1} \sim \rho \tilde{E}_{n,1}.$$

The first terms of $E_{n,1}^1$ are calculated in Section \ref{sec:exactul1tcgt}. They appear in Table \ref{table:E1ng}.

\subsection{Two galls} 
\label{sec:1tcgt2g}

For two galls, we have three cases (Figure \ref{fig:1tcgttwogalls}). The first two are analogous to the first two cases for general galled trees (Figure \ref{fig:ggttwogalls}). If there is no root gall, then as in general galled trees, both galls are contained within the two subtrees of the root node. Either one of the subtrees contains both galls (component 1 in eq.~(\ref{eq:1tcgt2gsm}) and Figure \ref{fig:1tcgttwogalls}), or each subtree contains one gall (component 2). 

\begin{figure}[tb]
    \centering
    \includegraphics[width=6.3cm]{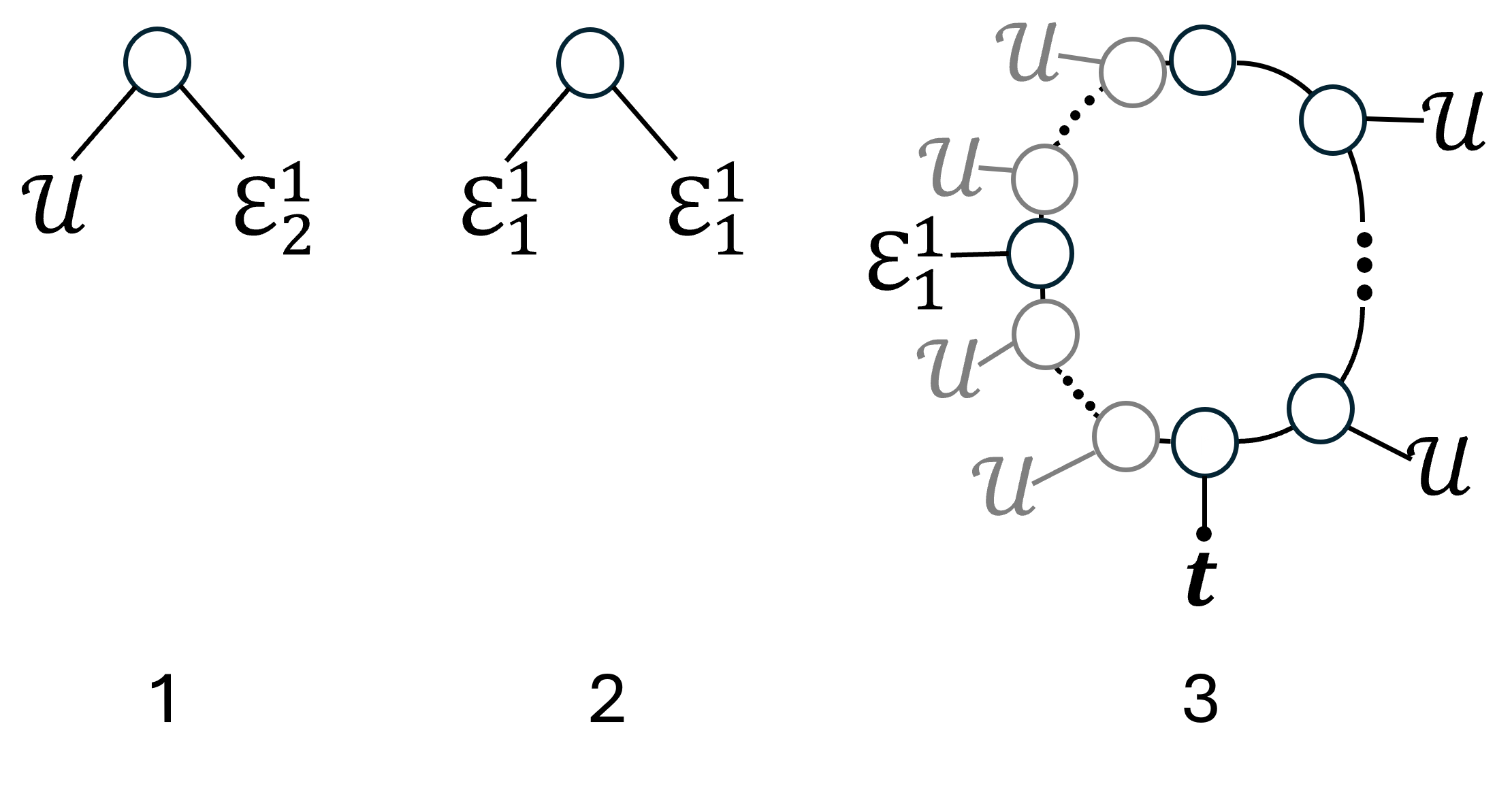}
    \vspace{-0.3cm}
    \caption{The three structures possible for a (non-plane) unlabeled simplex time-consistent galled tree with exactly two galls.}
    \label{fig:1tcgttwogalls}
\end{figure}

If there is a root gall (component 3 in eq.~(\ref{eq:1tcgt2gsm}) and Figure \ref{fig:1tcgttwogalls}), then by the time-consistency property, two non-empty sequences of nodes lead from the root node to the reticulation node (from which a leaf descends). A simplex time-consistent galled tree descends from one of the nodes in the sequence, and trees without galls descend from all the remaining nodes. Hence, one of the sequences is a nonempty sequence of trees with no galls and the other is a simplex time-consistent galled tree with one gall, flanked by two (potentially empty) sequences of trees with no galls. In symbolic notation,
\begin{equation}
    \mathcal{E}_2^1 = \{\circ\} \times \Big[\underbrace{ \mathcal{U}\times \mathcal{E}_2^1 }_{1} 
    \textrm{  }\dot{\cup}\textrm{  } 
    \underbrace{\text{MSET}_2(\mathcal{E}_1^1)}_{2} 
    \textrm{  }\dot{\cup}\textrm{  } 
    \underbrace{\{\square\}\times \big(\text{SEQ}(\mathcal{U}) \times \mathcal{E}_1^1 \times \text{SEQ}(\mathcal{U}) \big) \times \text{SEQ}^{+}(\mathcal{U})}_{3} \Big].
    \label{eq:1tcgt2gsm}
\end{equation}

Translating to a generating function $\mathcal{E}_2^1(t) = \sum_{n=0}^\infty E_{n,2}^1 t^n$, we have
$$\mathcal{E}_2^1(t) = \mathcal{U}(t) \, \mathcal{E}_2^1(t) + \frac{1}{2}[\mathcal{E}_1^1(t)^2 + \mathcal{E}_1^1(t^2)] + t \bigg(\frac{\mathcal{E}_1^1(t)}{[1 - \mathcal{U}(t)]^2}\bigg) \bigg(\frac{\mathcal{U}(t)}{1 - \mathcal{U}(t)}\bigg).$$ 
Isolating $\mathcal{E}_2^1{(t)}$ gives 
\begin{equation}
\label{eq:E21}
\mathcal{E}_2^1(t) = \frac{\mathcal{E}_1^1(t)^2}{2[1 - \mathcal{U}(t)]} + \frac{\mathcal{E}_1^1(t^2)}{2[1-\mathcal{U}(t)]} + \frac{t \, \mathcal{E}_1^1(t) \, \mathcal{U}(t)}{[1 - \mathcal{U}(t)]^4}.
\end{equation}

To derive asymptotics of $\mathcal{E}^1_2(t)$, note that as $t \rightarrow \rho^{-}$, $\mathcal{E}_1^{{1}} (t)$ has order $(1 - t/\rho)^{-\frac{3}{2}}$ (eq.~(\ref{eq:e11tul})) and $1 - \mathcal{U}(t)$ has order $(1 - t/\rho)^{\frac{1}{2}}$ (eq.~(\ref{eq:utasymp})). Hence, the first and third terms in eq.~(\ref{eq:E21}) have the same order---in particular, $(1 - t/\rho)^{-\frac{7}{2}}$---dominating the second term. From the eqs.~(\ref{eq:utasymp}) and (\ref{eq:e11tul}), as $t \rightarrow \rho^{-}$, we get
\begin{equation}
\label{eq:e12tul}
    \mathcal{E}_2^1(t) \sim \frac{\frac{\rho^2}{4\gamma^6(1 - \frac{t}{\rho})^3}}{2\gamma(1 - \frac{t}{\rho})^{\frac{1}{2}}} + \frac{\rho \frac{\rho}{2\gamma^3(1 - \frac{t}{\rho})^{\frac{3}{2}}}}{\gamma^4(1 - \frac{t}{\rho})^2} = 
    \frac{5\rho^2}{8\gamma^7(1 - \frac{t}{\rho})^{\frac{7}{2}}}.
\end{equation}

Using eq.~(\ref{eq:transferth}) with $\alpha = \frac{7}{2}$ this time, as $n\rightarrow \infty$,
\begin{equation}
\label{eq:e1ntwogalls}
E_{n,2}^1 = [t^n] \mathcal{E}_2^1(t) \sim \frac{5\rho^2}{8\gamma^7} \frac{n^{\frac{5}{2}}\rho^{-n}}{\Gamma(\frac{7}{2})} = \frac{1}{3\gamma^7\sqrt{\pi}}n^{\frac{5}{2}}\rho^{-n+2}    
\end{equation}
where we note that $\Gamma(\frac{7}{2}) = \frac{15}{8}\sqrt{\pi}$. Via eq.~(\ref{eq:En2}), the asymptotic expression for $E_{n,2}^1$ satisfies
$$E_{n,2}^1 \sim \rho^2 E_{n,2} \sim \rho^2 \tilde{E}_{n,2}.$$

The first terms of $E_{n,2}^1$ are calculated in Section \ref{sec:exactul1tcgt}, and they appear in Table \ref{table:E1ng}.

\subsection{Bivariate generating function} 
\label{sec:1tcgtbgf}

To derive the generating function $\mathcal{E}^1_g(t) = \sum_{n=0}^{\infty} E_{n,g}^1t^n$ for any fixed $g \geq 1$, we first turn our attention to the bivariate generating function $\mathcal{G}^1(t,u) = \sum_{n=0}^\infty \sum_{m=0}^\infty E^1_{n,m} t^n u^m$, where the coefficient of $t^n u^m$ denotes the number of unlabeled simplex time-consistent galled trees with $n$ leaves and $m$ galls. 

\begin{figure}[tb]
    \centering
    \includegraphics[width=6.3cm]{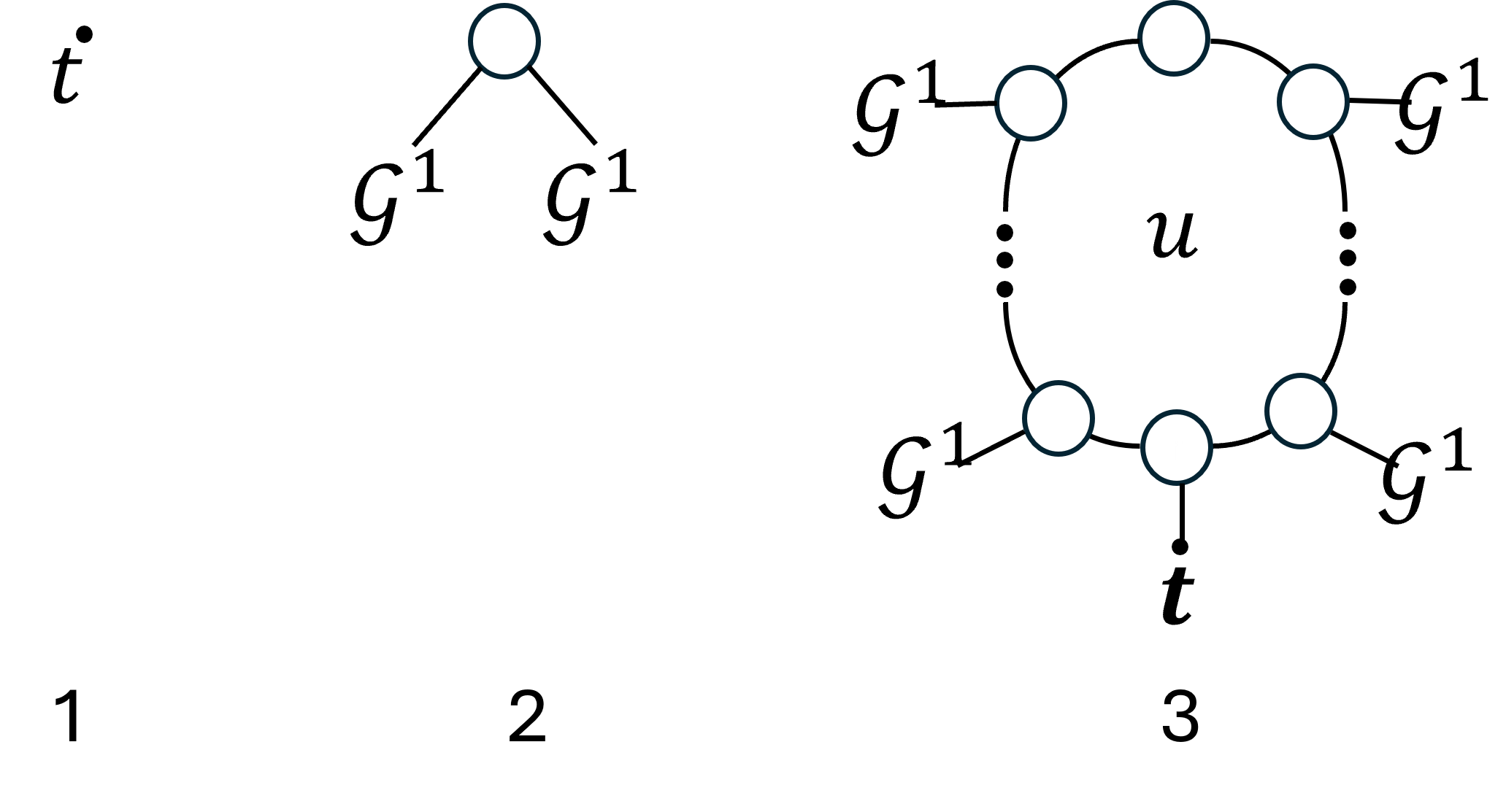}
    \vspace{-0.4cm}
    \caption{The three structures possible for a (non-plane) unlabeled simplex time-consistent galled tree.}
    \label{fig:1tcgtbivariate}
\end{figure}

A simplex time-consistent galled tree has three possibilities (Figure \ref{fig:1tcgtbivariate}). First, it can be one leaf (component 1 in eq.~(\ref{eq:g1tuul}) and Figure \ref{fig:1tcgtbivariate}) or not. Otherwise, if there is no root gall, then the root has two subtrees, each of which is a simplex time-consistent galled tree (component 2). If there is a root gall, then there are two nonempty sequences of nodes leading from the root to the reticulation node, from each of which a simplex time-consistent galled tree can descend (component 3). In the symbolic method construction, we take care to include the leaf descended from the reticulation node and the root gall itself: 
\begin{equation}
    \mathcal{G}^1 = \underbrace{\{\square\}}_{1} 
    \textrm{  }\dot{\cup}\textrm{  } 
    \{\circ\} \times  \Big[ {\underbrace{\text{MSET}_2(\mathcal{G}^1)}_{2}} 
    \textrm{  }\dot{\cup}\textrm{  }
    \underbrace{\mu \times \{\square\} \times \text{MSET}_2 \big(\text{SEQ}^+(\mathcal{G}^1)\big)}_{3} \Big].
    \label{eq:g1tuul}
\end{equation}
Converting to a generating function, we get 
\begin{equation}
\mathcal{G}^1(t, u) = t + \frac{1}{2}[\mathcal{G}^1(t, u)^2 + \mathcal{G}^1(t^2, u^2)] + \frac{ut}{2}\bigg[\bigg(\frac{\mathcal{G}^1(t, u)}{1 - \mathcal{G}^1(t,u)}\bigg)^2 + \frac{\mathcal{G}^1(t^2, u^2)}{1 - \mathcal{G}^1(t^2, u^2)}\bigg].
\label{eq:g1tubivgf}
\end{equation}

\subsection{Fixed number of galls} 
\label{sec:fixedul1tcgt}

From the bivariate generating function, we derive the generating function $\mathcal{E}_g^1(t)$ counting unlabeled simplex time-consistent galled trees with $g$ galls. As in the case of general galled trees, for each $g \geq 1$, 
\begin{equation}
\mathcal{E}^1_g(t) = \frac{1}{g!} \bigg(\frac{\partial^g}{\partial u^g} \mathcal{G}^1\bigg)(t, 0).
\label{eq:gderiv}
\end{equation}
To calculate this derivative, we make use of Leibniz's product rule and Fa\`a di Bruno's formula, as in the case of general galled trees (Section \ref{sec:fixedulggt}).

We sum the appropriate derivatives of the terms in eq.~(\ref{eq:g1tubivgf}). Derivatives with respect to $u$ of the initial term, $t$, vanish. For the next two terms, from eqs.~(17) and (18) of \cite{AgranatTamirEtAl25}, 
\begin{align}
\label{eq:term1}
\frac{1}{2g!} D_u^g[\mathcal{G}(t, u)^2] \big|_{u=0} &=\frac{1}{2}\sum_{\ell=0}^g \mathcal{E}^1_{\ell}(t) \, \mathcal{E}^1_{g-\ell}(t) \\
\label{eq:term2}
\frac{1}{2g!} D_u^g[\mathcal{G}_1(t^2, u^2)] \big|_{u=0} &= \frac{1}{2}\mathcal{E}^1_{\frac{g}{2}}(t^2),
\end{align}
understanding $\mathcal{E}^1_{\frac{g}{2}}(t^2) = 0$ if $g$ is odd.

The derivative of the next term, $({ut}/{2}) {\mathcal{G}^1(t,u)^2}/{[1 - \mathcal{G}^1(t,u)]^2}$, is more involved. First, in Leibniz's product rule (eq.~(\ref{eq:leibniz})), at $u=0$, the only term whose $g$th derivative does not vanish places one derivative on $\frac{ut}{2}$ and the other $g-1$ derivatives on ${\mathcal{G}^1(t,u)^2}/{[1 - \mathcal{G}^1(t, u)]^2}$. Hence, we have 
$$\frac{1}{g!} D_u^g \bigg( \frac{ut}{2} \frac{\mathcal{G}^1(t,u)^2}{[1 - \mathcal{G}^1(t,u)]^2} \bigg) \bigg\rvert_{u=0} = \frac{t}{2(g-1)!} D_u^{g-1} \bigg(\frac{\mathcal{G}^1(t,u)^2}{[1 - \mathcal{G}^1(t,u)]^2}\bigg) \bigg\rvert_{u=0}.$$ 

Using Fa\`a di Bruno's formula (eq.~(\ref{eq:fdb})) with $f(x) = {x^2}/{(1-x)^2}$ and $h(u) = \mathcal{G}^1(t,u)$ (with $u=0$ and $h(0) = \mathcal{U}(t)$) and noting that for any $\ell \geq 1$, $\frac{1}{\ell!} {D_u^{\ell} h(u)} |_{u=0} = \mathcal{E}^1_{\ell}(t)$ (eq.~(\ref{eq:gderiv})), we can write 
$$\frac{1}{g!} D_u^g \left(\frac{ut}{2} \frac{\mathcal{G}^1(t,u)^2}{[1 - \mathcal{G}^1(t,u)]^2}\right){\bigg\vert_{u=0}} = \frac{t}{2} \sum_{k_1 + 2k_2 + \ldots \atop + (g-1)k_{g-1} = g-1} \frac{1}{k_1! \, k_2! \cdots k_{g-1}!} D_x^{k_1 + k_2 + \ldots + k_{g-1}} \left(\frac{x^2}{(1-x)^2}\right) \bigg\rvert_{x = \mathcal{U}(t)} \prod_{\ell=1}^{g-1} \mathcal{E}^1_{\ell}(t)^{k_{\ell}}.$$

It is straightforward to show by induction that for any $k \geq 1$, 
$$\frac{1}{k!} D_x^k \left[\frac{x^2}{(1-x)^2}\right] = \frac{2x + (k-1)}{(1-x)^{k+2}}.$$ 
We then have 
\begin{equation}\frac{1}{g!} D_u^g \bigg(\frac{ut}{2} \frac{\mathcal{G}^1(t,u)^2}{[1 - \mathcal{G}^1(t,u)]^2} \bigg) {\bigg\vert_{u=0}}  = \frac{t}{2} \sum_{k_1 + 2k_2 + \ldots \atop + (g-1)k_{g-1} = g-1} \frac{\left(\sum_{\ell=1}^{g-1} k_{\ell}\right)!}{k_1! \, k_2! \cdots k_{g-1}!} \frac{2\mathcal{U}(t) + (\sum_{\ell=1}^{g-1} k_{\ell}) - 1}{[1 - \mathcal{U}(t)]^{(\sum_{\ell=1}^{g-1} k_{\ell}) + 2}} \prod_{\ell=1}^{g-1} \mathcal{E}^1_{\ell}(t)^{k_{\ell}}.
\label{eq:term3}
\end{equation}

For the last term of $G^1(t,u)$, we follow a similar approach. In  Leibniz's product rule, at $u=0$, all terms disappear except the term with one derivative of $\frac{ut}{2}$ and the other $g-1$ of $\mathcal{G}^1(t^2, u^2)/[1 - \mathcal{G}^1(t^2, u^2)]$. Hence,  
$$\frac{1}{g!} D_u^g \left(\frac{ut}{2} \frac{\mathcal{G}^1(t^2, u^2)}{1 - \mathcal{G}^1(t^2. u^2)}\right) \bigg\rvert_{u=0} = \frac{t}{2(g-1)!} D_u^{g-1} \left(\frac{\mathcal{G}^1(t^2, u^2)}{1 - \mathcal{G}^1(t^2. u^2)}\right) \bigg\rvert_{u=0}.$$
By eq.~(22) of \cite{AgranatTamirEtAl25}, $D_u^{g-1} \big({\mathcal{G}^1(t^2, u^2)} /[1 - \mathcal{G}^1(t^2, u^2)] \big) \big|_{u=0} \neq 0$ only if $g-1$ is even, and  
$$D_u^{g-1} \left(\frac{\mathcal{G}^1(t^2, u^2)}{1 - \mathcal{G}^1(t^2, u^2)}\right)\bigg\rvert_{u=0} = \sum_{2r_2 + 4r_4 + \ldots \atop + (g-1)r_{g-1} = g-1} \frac{(g-1)!}{r_2! \, r_4! \cdots r_{g-1}!} D_x^{r_1 + r_2 + \ldots + r_{g-1}} \left(\frac{x}{1-x}\right) \bigg\rvert_{x = \mathcal{U}(t^2)} \prod_{\ell=1}^{g-1} \frac{1}{\ell!} D_u^{\ell} \mathcal{G}^1(t^2, u^2) \big|_{u=0}.$$

Now, note that, as we can prove by induction, for each $k \geq 1$, $$D_x^k \left(\frac{x}{1-x}\right) = \frac{k!}{(1-x)^{k+1}}.$$ 
In addition, for even $m$, 
$$D_u^m [\mathcal{G}_1(t^2, u^2) \big] |_{u=0} = m! \, \mathcal{E}^1_{\frac{m}{2}}(t^2).$$
Inserting these, we get, for odd $g$ (and even $g-1$), 
\begin{equation}
D_u^{g-1} \bigg(\frac{\mathcal{G}^1(t^2, u^2)}{1 - \mathcal{G}^1(t^2, u^2)}\bigg) \bigg\rvert_{u=0} = \sum_{2r_2 + 4r_4 + \ldots \atop + (g-1)r_{g-1} = g-1} \frac{(g-1)!}{r_2! \, r_4! \cdots r_{g-1}!} \Bigg(\frac{\Big(\sum_{\ell=1}^{\frac{g-1}{2}} r_{2\ell}\Big)!}{[1 - \mathcal{U}(t^2)]^{(\sum_{\ell=1}^{(g-1)/2} r_{2\ell}) + 1}}\Bigg) \prod_{\ell=1}^{\frac{g-1}{2}} \mathcal{E}_1^{\ell}(t^2)^{k_{\ell}}.
\label{eq:term4}
\end{equation}

Summing the derivatives of individual terms in eqs.~(\ref{eq:term1}), (\ref{eq:term2}), (\ref{eq:term3}), and (\ref{eq:term4}), we get
\begin{align}
\mathcal{E}_g^1(t) & = \frac{1}{2[1-\mathcal{U}(t)]}\Bigg[ \bigg( \sum_{\ell=1}^{g-1} \mathcal{E}^1_{\ell}(t) \, \mathcal{E}^1_{g-\ell}(t) \bigg) + \delta_{0,g \textrm{ mod } 2}\cdot\mathcal{E}^1_{\frac{g}{2}}(t^2) \nonumber \\ 
& \quad + t \sum_{k_1 + 2k_2 + \ldots \atop + (g-1)k_{g-1} = g-1} 
\binom{k_1+k_2+\ldots+k_{g-1}}{k_1,k_2, \ldots ,k_{g-1}}
\frac{2\mathcal{U}(t) + (\sum_{\ell=1}^{g-1} k_{\ell}) - 1}{[1 - \mathcal{U}(t)]^{(\sum_{\ell=1}^{g-1} k_{\ell}) + 2}} \prod_{\ell=1}^{g-1} \mathcal{E}^1_{\ell}(t)^{k_{\ell}} \nonumber \\ 
& \quad + \delta_{1,g \textrm{ mod } 2}\Bigg(t \sum_{2r_2 + 4r_4 + \ldots \atop + (g-1)r_{g-1} = g-1} 
\binom{r_2+r_4+\ldots+r_{g-1}}{r_2,r_4,\ldots,r_{g-1}}
\bigg(\frac{1}{[1 - \mathcal{U}(t^2)]^{(\sum_{\ell=1}^{(g-1)/2} r_{2\ell}) + 1}}\bigg) \prod_{\ell=1}^{\frac{g-1}{2}} \mathcal{E}^1_{\ell}(t^2)^{r_{\ell}}\Bigg)\Bigg].
\label{eq:gf3}
\end{align}

Using the generating function in eq.~(\ref{eq:gf3}), we can derive the following result. The asymptotic analysis of $\mathcal{E}_g^1(t)$ largely follows that of $\mathcal{E}_g(t)$ in \cite{AgranatTamirEtAl24b}, and by the asymptotic equivalence of $\mathcal{E}_g(t)$ and $\tilde{\mathcal{E}}_g(t)$ in Section \ref{sec:fixedulggt}, that of $\tilde{\mathcal{E}}_g(t)$. However, an extra factor of $\rho^g$ appears in the analysis of $\mathcal{E}_g^1(t)$. 
\begin{theorem}
\label{thm:1}
As $t\rightarrow \rho^-$, the generating function $\mathcal{E}_g^1(t)$ for the number of unlabeled simplex time-consistent galled trees with $g$ galls, $g \geq 1$, satisfies \begin{equation}
    \mathcal{E}^1_g(t) \sim \beta_g \frac{\rho^g}{\gamma^{4g-1}(1 - \frac{t}{\rho})^{2g - \frac{1}{2}}}
    \label{eq:Eg1asymp}
\end{equation}
for a constant $\beta_g$. Furthermore, for $g \geq 2$, the constants $\beta_g$ obey the recurrence relation 
\begin{equation}
\label{eq:Bgrecurr}
\beta_g = \bigg( \frac{1}{2} \sum_{\ell=1}^{g-1} \beta_{\ell}\beta_{g-\ell} \bigg) 
+ \frac{1}{2} \sum_{k_1 + 2k_2 + \ldots \atop + (g-1)k_{g-1} = g-1} 
\binom{k_1+ k_2 +\ldots+ k_{g-1}}{k_1, k_2 ,\ldots, k_{g-1}}
{\bigg[\bigg(\sum_{\ell=1}^{g-1}k_{\ell} \bigg) + 1\bigg]} \prod_{\ell=1}^{g-1} \beta_{\ell}^{k_{\ell}},
\end{equation}
with $\beta_1 = \frac{1}{2}$.
\end{theorem}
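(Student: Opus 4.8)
The plan is to prove both assertions simultaneously by induction on $g$, reading the singular behaviour directly off the closed form in eq.~(\ref{eq:gf3}). For the base case $g=1$, eq.~(\ref{eq:e11tul}) already gives $\mathcal{E}_1^1(t)\sim \rho/[2\gamma^{3}(1-t/\rho)^{3/2}]$; matching this against $\beta_1\rho^{1}/[\gamma^{4\cdot 1-1}(1-t/\rho)^{2\cdot 1-1/2}]=\beta_1\rho/[\gamma^{3}(1-t/\rho)^{3/2}]$ forces $\beta_1=\tfrac12$, which is the stated initial value. (As a sanity check, the recurrence at $g=2$ returns $\beta_2=\tfrac12\beta_1^2+\tfrac12\cdot 1\cdot 2\cdot\beta_1=\tfrac18+\tfrac12=\tfrac58$, in agreement with eq.~(\ref{eq:e12tul}).)

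For the inductive step, assume $\mathcal{E}_m^1(t)\sim \beta_m\rho^{m}/[\gamma^{4m-1}(1-t/\rho)^{2m-1/2}]$ for all $1\le m<g$. I would first record the elementary singular behaviours near $t=\rho$: from eq.~(\ref{eq:utasymp}), $1-\mathcal{U}(t)\sim\gamma(1-t/\rho)^{1/2}$ and $\mathcal{U}(t)\to 1$, while $t\to\rho$. The crucial observation is that every quantity with argument $t^2$ appearing in eq.~(\ref{eq:gf3}) --- namely $\mathcal{U}(t^2)$ and the $\mathcal{E}_m^1(t^2)$ --- tends to a finite constant as $t\to\rho^-$, because $\rho^2<\rho$ lies strictly inside the disc of convergence (the generating functions $\mathcal{E}_m^1$ are built from $\mathcal{U}$ by rational operations and the substitution $t\mapsto t^2$, whose singularity is pushed out to $\sqrt\rho>\rho$, so their radius of convergence is $\rho$ and they are $\Delta$-analytic). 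Consequently the $\delta_{0,g\bmod 2}$ term and the entire $\delta_{1,g\bmod 2}$ sum in eq.~(\ref{eq:gf3}) are $O(1)$, hence after multiplication by the prefactor $1/(2[1-\mathcal{U}(t)])\sim 1/(2\gamma(1-t/\rho)^{1/2})$ they are only $O((1-t/\rho)^{-1/2})$ and do not affect the leading asymptotics.

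This leaves the "$\sum_\ell\mathcal{E}_\ell^1\mathcal{E}_{g-\ell}^1$" term and the Faà di Bruno sum. For the former, each product has order $(1-t/\rho)^{-(2g-1)}$ and net $\gamma$-power $\gamma^{-(4g-2)}$, by the inductive hypothesis and $\rho^\ell\rho^{g-\ell}=\rho^g$. For the latter, the key bookkeeping step is to set $K=\sum_\ell k_\ell$ and use $\sum_\ell\ell k_\ell=g-1$ to show that, in each summand, the exponent of $(1-t/\rho)$ is $-(K+2)/2-[2(g-1)-K/2]=-(2g-1)$ and the net power of $\gamma$ is $-(K+2)-[4(g-1)-K]=-(4g-2)$, both \emph{independent of the composition} $(k_1,\dots,k_{g-1})$, while the $\rho$-powers collect to $\rho^{1}\cdot\rho^{g-1}=\rho^{g}$ and the numerator $2\mathcal{U}(t)+K-1\to K+1$. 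Hence both surviving terms are of order exactly $(1-t/\rho)^{-(2g-1)}$, and after the prefactor $\mathcal{E}_g^1(t)$ has order $(1-t/\rho)^{-(2g-1/2)}$ with $\gamma$-power $\gamma^{-(4g-1)}$ and a factor $\rho^g$, which is the form claimed in eq.~(\ref{eq:Eg1asymp}).

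Finally I would extract the constant: collecting the order-$0$ parts, the "$\sum_\ell$" term contributes $\tfrac12\sum_{\ell=1}^{g-1}\beta_\ell\beta_{g-\ell}$ to $\beta_g$, and the Faà di Bruno sum contributes $\tfrac12\sum_{k_1+2k_2+\ldots+(g-1)k_{g-1}=g-1}\binom{k_1+\ldots+k_{g-1}}{k_1,\ldots,k_{g-1}}\big[(\sum_\ell k_\ell)+1\big]\prod_\ell\beta_\ell^{k_\ell}$; their sum is precisely the right-hand side of eq.~(\ref{eq:Bgrecurr}). Since $\beta_1=\tfrac12>0$ and every term in the recurrence is strictly positive, one also gets $\beta_g>0$, so the estimate is genuinely of the stated order (no cancellation). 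The main obstacle is the bookkeeping in the third paragraph --- verifying that the exponent of $(1-t/\rho)$ and the power of $\gamma$ in the Faà di Bruno sum are constant across all compositions, so that the sum does not create a stronger singularity than anticipated --- together with the care needed to justify that the $t^2$-terms (and the higher-order corrections to $\mathcal{U}(t)$ and to the $\mathcal{E}_m^1(t)$) are genuinely subdominant, i.e.\ the analyticity of $\mathcal{E}_m^1$ at $\rho^2$.
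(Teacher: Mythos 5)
Your proposal is correct and follows essentially the same route as the paper: strong induction on $g$, discarding the $t^2$-argument terms because $\rho^2<\rho$ lies inside the disc of convergence, verifying that the exponent of $(1-t/\rho)$, the power of $\gamma$, and the power of $\rho$ are the same (namely $2g-\tfrac12$, $4g-1$, and $g$) across all compositions in the Fa\`a di Bruno sum, and then reading off the constant from $\mathcal{U}(t)\to 1$ and $t\to\rho$. Your added remarks on the analyticity of $\mathcal{E}_m^1$ at $\rho^2$ and on the positivity of $\beta_g$ (ruling out cancellation) are minor strengthenings of the paper's argument, not a different method.
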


\begin{proof} 
We perform strong induction on $g$. The base case of $g=1$  calculated in eq.~(\ref{eq:e11tul}) follows eq.~(\ref{eq:Eg1asymp}), with $\beta_1=\frac{1}{2}$.
The $g=2$ case in eq.~(\ref{eq:e12tul}) follows eq.~(\ref{eq:Eg1asymp}), with $\beta_2 = \frac{5}{8} = \frac{1}{2} ( \frac{1}{2} \frac{1}{2} ) + \frac{1}{2} (1)(2) \frac{1}{2}$ as in eq.~(\ref{eq:Bgrecurr}).

Now, assume that, for some $g \geq 2$, the claim holds for all $k$, $1 \leq k \leq g-1$. We show that the claim holds for $k=g$ as well. We first determine which terms in eq.~(\ref{eq:gf3}) contribute to the asymptotic approximation, considering the exponent of $1 - \frac{t}{\rho}$ in the denominator.

Assuming the inductive hypothesis, the second and fourth terms, those involving functions of $t^2$, do not contribute, as $\rho^2 < \rho$ and $\rho^2$ is not a singularity of $\mathcal{E}_g^1(t)$. We examine the first and third terms.

For each $\ell$, $1 \leq g-1$, by the inductive hypothesis and eq.~(\ref{eq:utasymp}), the exponent of $1-\frac{t}{\rho}$ in the denominator is ${\mathcal{E}^1_{\ell}(t) \, \mathcal{E}^1_{g-\ell}(t)} / {[2(1 - \mathcal{U}(t))]}$ contributes 
$$\bigg(2\ell - \frac{1}{2}\bigg) + \bigg[2(g-\ell) - \frac{1}{2}\bigg] + \frac{1}{2} = 2g - \frac{1}{2}.$$ 
Similarly, by the inductive hypothesis and eq.~(\ref{eq:utasymp}), each term in the third sum contributes 
$$\frac{1}{2} \bigg[\bigg(\sum_{\ell=1}^{g-1} k_{\ell} \bigg) + {3}\bigg] + \sum_{\ell=1}^{g-1} \bigg(2\ell - \frac{1}{2}\bigg) k_{\ell} = 2\bigg(\sum_{\ell=1}^{g-1} \ell k_{\ell} \bigg) + \frac{3}{2} = 2(g-1) + \frac{3}{2} = 2g - \frac{1}{2}.$$

For the exponent of $\gamma$ in the asymptotic expansion, we note that each factor of $(1-\frac{t}{\rho})^{\frac{1}{2}}$ is accompanied by a factor of $\gamma$, so that the exponent of $\gamma$ in the asymptotic expansion is $2(2g - \frac{1}{2}) = 4g - 1$. 

For the exponent of $\rho$, by the inductive hypothesis, for the first term in the asymptotic approximation, $\mathcal{E}^1_{\ell}(t) \, \mathcal{E}^1_{g-\ell}(t)$ has $\ell+(g-\ell)=g$ as the exponent of $\rho$. In third term, the exponent of $\rho$ in $\prod_{\ell=1}^{g-1} \mathcal{E}^1_{\ell}(t)^{k_{\ell}}$ is $\sum_{\ell=1}^{g-1} \ell k_{\ell} = g-1$, which, when combined with the $t$ term from outside the sum, as $t \rightarrow \rho^-$, leads to an exponent equal to $g$. We have therefore demonstrated that for $g \geq 1$ galls, each term that contributes to the asymptotic approximation is proportional to $\rho^g/\big[\gamma^{4g-1}(1 - \frac{t}{\rho})^{2g - \frac{1}{2}} \big]$.

To compute the proportionality constant, $\beta_g$, we note that $\lim_{t \rightarrow \rho^-} \mathcal{U}(t)=1$, and consequently, the first and third terms of eq.~(\ref{eq:gf3}) produce eq.~(\ref{eq:Bgrecurr}).

\end{proof}

\bp
\label{prop:2}
The constants $\{\beta_g\}_{g \geq 1}$ satisfy $2^{2g-1}\beta_g=C_{2g-1}$, where $C_{m} = \frac{1}{m+1} {2m \choose m}$ is a Catalan number.
\ep
\bpf
We show an equality between $\mathfrak{D}(t) = \sum_{g=1}^\infty 2^{2g-1}\beta_{g}t^{2g-1}$ and the odd part of the generating function of the Catalan numbers $\mathcal{C}_O(t) = \sum_{g=1}^\infty C_{2g-1}t^{2g-1}$.

First, following claim 2 in the proof of Proposition 1 of \cite{AgranatTamirEtAl25}, we rewrite $\sum_{\ell=1}^{g-1}k_{\ell}=\ell$, and we can then replace the sum $\sum_{k_1+2k_2+\ldots +(g-1)k_{g-1}=g-1} \binom{k_1+k_2+\ldots +k_{g-1}}{k_1,k_2,\ldots,k_{g-1}}$ with a sum over compositions $C(g-1, \ell)$ of $g-1$ into $\ell$ positive integer parts, $\sum_{\ell=1}^{g-1}\sum_{d\in C(g-1,\ell)}$. Eq.~(\ref{eq:Bgrecurr}) becomes 
\begin{equation*}
    \beta_g = \bigg(  \frac{1}{2} \sum_{\ell=1}^{g-1}\beta_{\ell}\beta_{g-\ell} \bigg) + \frac{1}{2} \bigg[ (\ell+1)\sum_{\mathbf{d}\in C(g-1,\ell)}\prod_{j=1}^{\ell}\beta_{d_j}\bigg].
\end{equation*}

Next,
\begin{align*}
    \mathfrak{D}(t) &= 2^{2-1}\frac{1}{2}t
    + \sum_{g=2}^\infty \bigg(\sum_{\ell=1}^{g-1}2^{2g-2}\beta_{\ell}\beta_{g-\ell}\bigg) t^{2g-1} + \sum_{g=2}^\infty \bigg[\sum_{\ell=1}^{g-1}(\ell+1)2^{2g-2}\sum_{\mathbf{d}\in C(g-1,\ell)}\prod_{j=1}^{\ell}\beta_{d_j}\bigg]t^{2g-1} \\ \nonumber
  &= t+\bigg[ \sum_{\ell=1}^\infty 2^{2\ell-1}\beta_{\ell}t^{2\ell-1}\sum_{g=\ell+1}^\infty 2^{2(g-\ell)-1}\beta_{g-\ell}t^{2(g-\ell)-1} \bigg]t \\ \nonumber
  &  \quad + \bigg[ \sum_{\ell=1}^\infty (\ell+1)(2t)^{\ell} 
  \sum_{g=\ell+1}^\infty \sum_{\mathbf{d}\in C(g-1,\ell)} \prod_{j=1}^{\ell} 2^{2d_j-1}\beta_{d_j}t^{2d_j-1} \bigg]t  \\ \nonumber
  &= t+t\mathfrak{D}(t)^2+t\sum_{\ell=1}^\infty (\ell+1)[2t\mathfrak{D}(t)]^{\ell} \\ \nonumber
  &= t+t\mathfrak{D}(t)^2+\frac{2t^2\mathfrak{D}(t)}{[1-2t\mathfrak{D}(t)]^2}+\frac{2t^2\mathfrak{D}(t)}{1-2t\mathfrak{D}(t)}.
\end{align*}
We have shown in Proposition 9 of \cite{AgranatTamirEtAl24b} that this implicit expression gives the generating function for the Catalan numbers with odd indices. 
\epf

\begin{corollary} 
 \label{cor:eg1compapprox}
The generating function $\mathcal{E}_g^1(t)$ for the number of unlabeled simplex time-consistent galled trees with $g$ galls satisfies as $t \rightarrow \rho^-$
    \begin{equation}
    \mathcal{E}_g^1(t) \sim \frac{(4g-3)!! \, \rho^g}{(2g)! \, \gamma^{4g-1} (1-\frac{t}{\rho})^{2g-\frac{1}{2}}}.
    \end{equation}
\end{corollary}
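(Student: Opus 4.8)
The plan is to obtain this corollary as an immediate consequence of Theorem~\ref{thm:1} together with Proposition~\ref{prop:2}: the former fixes the shape of the singular expansion of $\mathcal{E}_g^1(t)$ at $t=\rho$, while the latter evaluates the constant $\beta_g$ in closed form, so that only a routine factorial identity remains. Since Theorem~\ref{thm:1} yields the equivalence~(\ref{eq:Eg1asymp}), namely $\mathcal{E}_g^1(t)\sim\beta_g\rho^g/[\gamma^{4g-1}(1-\frac t\rho)^{2g-\frac12}]$, it suffices to show that $\beta_g=(4g-3)!!/(2g)!$.

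First I would invoke Proposition~\ref{prop:2}: from $2^{2g-1}\beta_g=C_{2g-1}=\frac1{2g}\binom{4g-2}{2g-1}=\frac{(4g-2)!}{(2g)!\,(2g-1)!}$ one gets
\begin{equation*}
\beta_g=\frac{(4g-2)!}{2^{2g-1}\,(2g)!\,(2g-1)!}.
\end{equation*}
Next I would apply the standard double-factorial identity $(2m-1)!!=(2m)!/(2^m m!)$ with $m=2g-1$, which gives $(4g-3)!!=(4g-2)!/\big(2^{2g-1}(2g-1)!\big)$; dividing by $(2g)!$ and comparing with the display above shows that $\beta_g=(4g-3)!!/(2g)!$. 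As a sanity check, $g=1$ gives $\beta_1=1!!/2!=\frac12$ and $g=2$ gives $\beta_2=5!!/4!=\frac{15}{24}=\frac58$, matching the base cases verified in the proof of Theorem~\ref{thm:1}. Substituting this value of $\beta_g$ into~(\ref{eq:Eg1asymp}) then gives the claimed asymptotic equivalence.

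I do not expect any genuine obstacle here. All of the analytic work---isolating the dominant singularity at $\rho$, discarding the terms of eq.~(\ref{eq:gf3}) that are analytic at $\rho$ (those built from $t^2$), tracking the exponents of $1-\frac t\rho$, of $\gamma$, and of $\rho$ in each surviving term, and reading off the recurrence~(\ref{eq:Bgrecurr}) for $\beta_g$---has already been carried out in Theorem~\ref{thm:1}, and the closed-form solution of that recurrence in terms of odd-indexed Catalan numbers is exactly Proposition~\ref{prop:2}. The corollary is thus the bookkeeping step that recasts the Catalan-number description of $\beta_g$ into the double-factorial form $(4g-3)!!/(2g)!$; in this form the prefactor is manifestly the same one appearing for time-consistent galled trees in eq.~(\ref{eq:eng}), and, via the transfer theorem~(\ref{eq:transferth}), one reads off $E^1_{n,g}\sim\rho^g\tilde E_{n,g}$, extending the observed relations $E^1_{n,1}\sim\rho E_{n,1}$ and $E^1_{n,2}\sim\rho^2 E_{n,2}$ to all $g$.
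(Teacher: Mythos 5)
Your proposal is correct and follows essentially the same route as the paper: substitute the value of $\beta_g$ from Proposition~\ref{prop:2} into the singular expansion of Theorem~\ref{thm:1} and apply the identity $C_{2g-1}=2^{2g-1}(4g-3)!!/(2g)!$. You merely spell out the verification of that Catalan/double-factorial identity, which the paper cites without proof, so there is no substantive difference.
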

\begin{proof}
    In Theorem \ref{thm:1}, we insert the constant $\beta_g$ from Proposition \ref{prop:2}. The result follows from the identity $C_{2g-1}=[2^{2g-1}(4g-3)!!]/(2g)!$ for the Catalan numbers.
\end{proof}

\medskip
Using Corollary \ref{cor:eg1compapprox} in conjunction with eq.~(\ref{eq:transferth}), we obtain the following proposition:

\begin{proposition} 
The asymptotic growth of the number $E_{n,g}^1$ of unlabeled simplex time-consistent galled trees with $n$ leaves and a fixed number of galls $g \geq 1$ satisfies 
\begin{equation}
    E_{n,g}^1 \sim \frac{2^{2g-1}}{(2g)! \, \gamma^{4g-1}\sqrt{\pi}}n^{2g-\frac{3}{2}}\rho^{-n+g}.
    \label{eq:Eng1}
\end{equation}
\end{proposition}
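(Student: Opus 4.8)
The plan is to combine Corollary~\ref{cor:eg1compapprox} with the transfer theorem recorded in eq.~(\ref{eq:transferth}). Corollary~\ref{cor:eg1compapprox} supplies the singular expansion of $\mathcal{E}_g^1(t)$ at its dominant singularity $t=\rho$, namely $\mathcal{E}_g^1(t) \sim \bigl[(4g-3)!!\,\rho^g\bigr]\big/\bigl[(2g)!\,\gamma^{4g-1}(1-t/\rho)^{2g-\frac12}\bigr]$. Since $\mathcal{E}_g^1(t)$ is assembled from $\mathcal{U}(t)$ and its substitution $t\mapsto t^2$ through the rational operations in eq.~(\ref{eq:gf3}), it inherits $\Delta$-analyticity at $\rho$ (the remaining candidate singularities come from $\mathcal{U}(t^2)$, located at $|t|=\sqrt{\rho}>\rho$ and hence outside a suitable $\Delta$-domain, precisely as used in the proof of Theorem~\ref{thm:1}), so the hypotheses of eq.~(\ref{eq:transferth}) hold with $\nu=\rho$ and $\alpha=2g-\tfrac12$, which is not a non-positive integer for $g\geq 1$.

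Applying eq.~(\ref{eq:transferth}) then yields
\[
E_{n,g}^1 = [t^n]\mathcal{E}_g^1(t) \;\sim\; \frac{(4g-3)!!\,\rho^g}{(2g)!\,\gamma^{4g-1}\,\Gamma\!\bigl(2g-\tfrac12\bigr)}\,n^{2g-\frac32}\,\rho^{-n}.
\]
The only remaining step is to evaluate the constant, which rests on the standard half-integer Gamma identity $\Gamma\!\bigl(m+\tfrac12\bigr)=\bigl[(2m-1)!!\big/2^{m}\bigr]\sqrt{\pi}$. Taking $m=2g-1$ gives $\Gamma\!\bigl(2g-\tfrac12\bigr)=\bigl[(4g-3)!!\big/2^{2g-1}\bigr]\sqrt{\pi}$, so the double-factorial factors cancel and $(4g-3)!!\big/\Gamma\!\bigl(2g-\tfrac12\bigr)=2^{2g-1}\big/\sqrt{\pi}$. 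Substituting this back produces exactly $E_{n,g}^1 \sim \bigl[2^{2g-1}\big/\bigl((2g)!\,\gamma^{4g-1}\sqrt{\pi}\bigr)\bigr]\,n^{2g-\frac32}\,\rho^{-n+g}$, which is the claimed asymptotic.

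There is no genuine obstacle here: the argument is a direct singularity-analysis transfer, and the only points needing care are the bookkeeping of the Gamma-function simplification and the (already-established) fact that the $t^2$-terms of eq.~(\ref{eq:gf3}) are subdominant and do not contribute. As a consistency check one may note that the result reads $E_{n,g}^1 \sim \rho^g\,\tilde E_{n,g}$, with $\tilde E_{n,g}$ as in eq.~(\ref{eq:Eng}), extending the relations $E_{n,1}^1\sim\rho\,\tilde E_{n,1}$ and $E_{n,2}^1\sim\rho^2\,\tilde E_{n,2}$ found in the $g=1$ and $g=2$ cases.
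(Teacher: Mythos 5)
Your argument is correct and matches the paper's proof essentially verbatim: both apply the transfer theorem of eq.~(\ref{eq:transferth}) to the singular expansion in Corollary~\ref{cor:eg1compapprox} and simplify via the identity $\Gamma(2g-\tfrac{1}{2})=\sqrt{\pi}\,(4g-3)!!/2^{2g-1}$. The additional remarks on $\Delta$-analyticity and the consistency check against $\tilde{E}_{n,g}$ are fine but not needed beyond what the paper already establishes.
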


\begin{proof}
We apply eq.~(\ref{eq:transferth}) to the expression in Corollary \ref{cor:eg1compapprox} and simplify using the identity $\Gamma\left(2g - \frac{1}{2}\right) =  \sqrt{\pi} [(4g-3)!!]/2^{2g-1}$.
\end{proof}

\medskip
Via eq.~(\ref{eq:Eng}), the asymptotic expression for $E_{n,g}^1$ satisfies $$E_{n,g}^1 \sim \rho^g E_{n,g} \sim \rho^g \tilde{E}_{n,g}.$$
Values of $E_{n,g}^1$ with small $n$ and $g$ are calculated in Section \ref{sec:exactul1tcgt}. They appear in Table \ref{table:E1ng}.

\subsection{Arbitrary number of galls} 
\label{sec:allul1tcgt}

Next, we study the generating function of unlabeled simplex time-consistent galled trees with an arbitrary number of galls, $\mathcal{A}^1(t)=\sum_{n=0}^\infty A^1_n t^n$. A tree has three possibilities: (1) it is a single node (component 1 in eq.~(\ref{eq:a1t}) and Figure \ref{fig:1tcgtbivariate}); (2) it has no root gall, and the two subtrees of the root are both simplex time-consistent galled trees (component 2); (3) it has a root gall, from which two nonempty sequences of nodes lead from the root to the reticulation node. From each node in the sequence, a simplex time-consistent galled tree descends (component 3). In symbolic notation,
\begin{equation}
\mathcal{A}^1 = \underbrace{\{\square\}}_{1} 
\textrm{  }\dot{\cup}\textrm{  }
\{\circ\} \times \big[\underbrace{\text{MSET}_2(\mathcal{A}^1)}_{2} 
\textrm{  }\dot{\cup}\textrm{  } 
\underbrace{\{\square\} \times \text{MSET}_2 \big(\text{SEQ}^{+}(\mathcal{A}^1)\big)}_{3}  \big].
    \label{eq:a1t}
\end{equation}
Translating into a generating function, we get 
\begin{equation}
    \mathcal{A}^1(t) = t + \frac{1}{2}[\mathcal{A}^1(t^2) + \mathcal{A}^1(t)^2] + \frac{t}{2} \bigg[ \bigg( \frac{\mathcal{A}^1(t)}{1 - \mathcal{A}^1(t)} \bigg)^2 + \frac{\mathcal{A}^1(t^2)}{1 - \mathcal{A}^1(t^2)}\bigg].
    \label{eq:A1tgf}
\end{equation}

To derive asymptotics for $\mathcal{A}^1(t)$ we use the \emph{asymptotics of implicit tree-like classes} theorem~\citep{MeirAndMoon89b, MeirAndMoon89a}. According to the theorem~\citep[pp.~467-468]{FlajoletAndSedgewick09}, if $\mathcal{A}^1(t)$ belongs to the \emph{smooth implicit-function schema}, then as $n \rightarrow \infty$,  we conclude $A^1_n \sim [\delta/(2\sqrt{\pi})] n^{-\frac{3}{2}}r^{-n}$. In this expression, $r$ is the radius of convergence of $\mathcal{A}^1(t)$ and $\delta = \sqrt{2r\phi_t(r,s)/\phi_{ww}(r,s)}$, where $\phi(t,w)=\sum_{n=0}^\infty \sum_{k=0}^\infty S_{n,k}t^nw^k$, $\mathcal{A}^1(t)=\phi \big(t,\mathcal{A}^1(t) \big)$, and $(r,s)$ solves the \emph{characteristic system}
\begin{align}
    s &= \phi(r,s)   \label{eq:charsys1} \\ 
    1 &= \phi_w(r,s).     \label{eq:charsys2}
\end{align}

\begin{proposition}
    $\mathcal{A}^1(t)$ belongs to the smooth implicit-function schema.
\end{proposition}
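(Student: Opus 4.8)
The plan is to verify that the functional equation $\mathcal{A}^1(t) = \phi(t, \mathcal{A}^1(t))$ arising from eq.~(\ref{eq:A1tgf}) meets each of the defining conditions of the smooth implicit-function schema in the sense of Flajolet \& Sedgewick (pp.~467--468). First I would identify $\phi$ explicitly: dropping the ``small'' terms in $t^2$ (which are analytic with a strictly larger radius of convergence and hence do not affect the schema), the bivariate function is
\begin{equation*}
\phi(t,w) = t + \frac{w^2}{2} + \frac{t}{2}\bigg(\frac{w}{1-w}\bigg)^2 + \frac{t\,\mathcal{A}^1(t^2)}{2[1-\mathcal{A}^1(t^2)]} + \frac{t\,\mathcal{A}^1(t^2)}{2},
\end{equation*}
treating the $t^2$-terms as known analytic coefficients; more precisely one writes $\phi(t,w)=\sum_{n,k}S_{n,k}t^nw^k$ and checks $S_{n,k}\ge 0$, $S_{0,0}=0$, $S_{n,k}>0$ for some $n$ and some $k\ge 2$ (here $S_{0,2}=\tfrac12>0$), and that $\phi$ depends genuinely on $w$ (not affine in $w$). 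These are immediate from the explicit form, since every coefficient in the expansions of $w^2/2$, $t(w/(1-w))^2/2$, and the Wedderburn--Etherington-type series $\mathcal{A}^1(t^2)$ is nonnegative.

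Next I would establish analyticity of $\phi$ in a bidisk. The function $\phi(t,w)$ is analytic for $|w|<1$ and for $|t|$ less than the radius of convergence $\rho^{(2)}$ of the $t^2$-terms, which exceeds the radius of convergence $r$ of $\mathcal{A}^1(t)$ (because $r<1$ forces $r^2<r\le\rho^{(2)}$; more carefully, $r>0$ and $r^2$ is not a singularity of $\mathcal{A}^1$, a point already invoked in the proof of Theorem~\ref{thm:1}). So there is a bidisk $|t|<R$, $|w|<1$ with $R>r$ on which $\phi$ is analytic, and one checks $r<R$ and $s<1$ where $(r,s)$ solves the characteristic system~(\ref{eq:charsys1})--(\ref{eq:charsys2}); since $\phi(r,w)\to\infty$ as $w\to 1^-$ while $\phi(r,0)=r+\dots$ is finite and $\phi_w(r,0)<1$ for small $r$, a solution $(r,s)$ with $0<s<1$ exists by the intermediate value theorem applied to $w\mapsto\phi_w(r,w)-1$ and $w\mapsto\phi(r,w)-w$, exactly as in the Meir--Moon framework.

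The final ingredient is that the characteristic system has a solution in the domain of analyticity with $\phi_{ww}(r,s)\neq 0$, so that $\delta=\sqrt{2r\phi_t(r,s)/\phi_{ww}(r,s)}$ is well-defined and positive; this holds because $\phi_t(r,s)>0$ and $\phi_{ww}(r,s)>0$ by positivity of coefficients (the $w$-dependence is strictly convex). I would also note the aperiodicity condition: $\mathcal{A}^1(t)$ has $A^1_1\ne 0$ and $A^1_2\ne 0$ (or at any rate the support of the coefficient sequence is not contained in an arithmetic progression $d\mathbb{Z}$ with $d\ge 2$), which follows from the presence of the single-node term $\{\square\}$ together with galled structures at consecutive leaf counts. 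The main obstacle, and the only part requiring genuine care rather than inspection, is confirming that the characteristic system~(\ref{eq:charsys1})--(\ref{eq:charsys2}) actually admits a solution with $s$ strictly inside the disk of convergence of the $w$-expansion (i.e.\ $s<1$) rather than the solution escaping to the boundary singularity at $w=1$; this is handled by the standard monotonicity argument (both $\phi(r,\cdot)$ and $\phi_w(r,\cdot)$ are increasing and convex on $[0,1)$ and blow up at $1$, while at $0$ they start below the diagonal), guaranteeing a unique crossing point, after which Theorem VII.3 of \cite{FlajoletAndSedgewick09} applies verbatim.
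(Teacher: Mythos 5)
Your overall strategy---exhibit $\phi$ with $\mathcal{A}^1(t)=\phi(t,\mathcal{A}^1(t))$, check nonnegativity of the $S_{n,k}$, $S_{0,0}=0$, $S_{0,1}\neq 1$, genuine nonlinearity in $w$ via $S_{0,2}=\frac{1}{2}$, and analyticity in a bidisk using the fact that the $\mathcal{A}^1(t^2)$-terms are analytic beyond $t=r$ because $r^2<r$---is the same as the paper's, and those parts are sound (modulo a transcription slip: the last summand of your $\phi$ should be $\frac{1}{2}\mathcal{A}^1(t^2)$, not $\frac{t}{2}\mathcal{A}^1(t^2)$, since eq.~(\ref{eq:A1tgf}) carries no factor of $t$ on that term).

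The gap is in the one step you yourself identify as the crux: existence of a solution $(r,s)$ of the characteristic system (\ref{eq:charsys1})--(\ref{eq:charsys2}) strictly inside the domain of analyticity. You apply the intermediate value theorem separately to $w\mapsto \phi_w(r,w)-1$ and to $w\mapsto \phi(r,w)-w$ for a fixed $r$; monotonicity and convexity in $w$ do give each of these a root in $(0,1)$, but in general they are two \emph{different} roots, and the entire difficulty is to produce a single $r$ at which they coincide. Nothing in ``both functions are increasing, convex, and blow up at $w=1$'' forces a simultaneous solution; this is precisely the configuration in which the schema can fail, with the singularity of the implicit function inherited from the boundary $w=1$ rather than from a branch point. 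A soft argument that does close the gap is available: since $\phi(t,w)\geq \frac{t}{2}\,w^2/(1-w)^2\to\infty$ as $w\to 1^-$ for any fixed $t>0$, the increasing limit $s^*=\lim_{t\to r^-}\mathcal{A}^1(t)$ must satisfy $s^*<1$ (otherwise the right side of $\mathcal{A}^1(t)=\phi(t,\mathcal{A}^1(t))$ would diverge while the left side tends to $1$); then $\phi$ is analytic at $(r,s^*)$, and the implicit function theorem forces $\phi_w(r,s^*)=1$, so $(r,s^*)$ solves the system. You state neither this argument nor any substitute for it. The paper instead settles the step quantitatively: it introduces $b=\mathcal{A}^1(r^2)$ as a third unknown, computes $A^1_n$ for $n\leq 25$ from the recursion (\ref{eq:exactulAtildn}), closes the system with $b=\sum_{n\leq 25}A^1_n r^{2n}$, and solves eqs.~(\ref{eq:a1ulcs1})--(\ref{eq:a1ulcs2}) numerically to obtain $(r,s,b)\approx(0.2344,\,0.4349,\,0.0584)$---values it needs anyway to evaluate $\delta$ in the ensuing asymptotic. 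Your route, even once repaired as above, would establish existence but would leave those constants undetermined.
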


\begin{proof}
    Showing that $\mathcal{A}^1(t)$ belongs to the smooth implicit-function schema amounts to validating several conditions. (1) $\mathcal{A}^1(t)$ is analytic at $0$. (2) $A^1_n \geq 0$ for all $n \geq 0$. (3) $A^1_0=0$.
\begin{enumerate}[ {(}1{)} ]    \setcounter{enumi}{3}
\item There exists a function $\phi(t,w)=\sum_{n=0}^\infty \sum_{k=0}^\infty S_{n,k} t^n w^k$ such that $\mathcal{A}^1(t)=\phi \big(t,\mathcal{A}^1(t) \big)$ and $\phi(t,w)$ satisfies the following conditions:
    \begin{enumerate}
    \item $\phi(t, w)$ is analytic around $(0, 0)$.
    \item The $S_{n,k}$ are nonnegative for all $n,k \geq 0$.
    \item  $S_{0,1}\neq1$. 
    \item There exists $S_{n,k} > 0$ for some $k \geq 0$ and $n \geq 2$. 
    \item $S_{0,0}=0$.
    \item There exists a solution $(r_0,s_0)$ to the characteristic system (eqs.~(\ref{eq:charsys1}) and (\ref{eq:charsys2})) in the region around $(0,0)$ where $\phi$ is analytic. 
    \end{enumerate}
\end{enumerate}
 
For the first three conditions, (1) $\mathcal{A}^1(t)$ is analytic at $0$ from the form of the expression in eq.~(\ref{eq:A1tgf}). (2) As $A_n^1$ represents a count of a class of objects, $A_n^1 \geq 0$ for all $n \geq 0$. (3) $A^1_0 = 0$ because there are no simplex time-consistent galled trees with no leaves.
\begin{enumerate}[ {(}1{)} ]
    \setcounter{enumi}{3}
    \item We define $\phi(t,w)=\sum_{n=0}^\infty \sum_{k=0}^\infty S_{n,k} t^n w^k$ as 
    \begin{equation*}
 \phi(t,w) = \sum_{n=0}^\infty \sum_{k=0}^\infty S_{n,k}t^nw^k =   t + \frac{1}{2}w^2 + \frac{1}{2}\mathcal{A}^1(t^2) + \frac{t}{2} \bigg[ \bigg(\frac{w}{1-w} \bigg)^2 + \frac{\mathcal{A}^1(t^2)}{1 - \mathcal{A}^1(t^2)} \bigg],
\end{equation*} 
    so that $\mathcal{A}^1(t)=\phi \big(t,\mathcal{A}^1(t) \big)$. In addition,
    \begin{enumerate}
        \item $\phi(t, w)$ is analytic around $(0, 0)$ because it is a rational function in $w$ and $\mathcal{A}^1(t)$ has some positive radius of convergence: its coefficients are bounded above by those of $\mathcal{A}(t)$, the generating function for time-consistent galled trees. We have shown that $\mathcal{A}(t)$ has a positive radius of convergence in Section 5.2 of \cite{AgranatTamirEtAl24a}, and therefore $\mathcal{A}^1(t)$ does as well. 
    \item The $S_{n,k}$ are all nonnegative because $\mathcal{A}^1(t^2) \geq 0$ for all $t$ (and $\mathcal{A}^1(r^2)<1$, as $r^2$ is less than the radius of convergence $r$, $\mathcal{A}^1(t)$ is monotonically increasing for $t<r$, and $\mathcal{A}^1(t)<1$ for $t < r$).
    \item $S_{0,1}\neq1$ because $S_{0,1}$ is the coefficient of $t^0 w^1$, and it is equal to 0. 
    \item $S_{0,2}$ is the coefficient of $t^0 w^2$ and is equal to $\frac{1}{2}$, so that $S_{0,2} > 0$.
    \item $S_{0,0}=0$ because $A^1_0=0$ so that $\mathcal{A}^1(0)=0$, leading to $\phi(0,0)=0$.
    \item There is a solution to the characteristic system as we show below. 
    \end{enumerate}
\end{enumerate}

We denote $\mathcal{A}^1(r^2)=b$, and the characteristic system is: 
\begin{align}
    s &=   r + \frac{1}{2}s^2 + \frac{1}{2}b +\frac{1}{2} r\bigg[\bigg(\frac{s}{1-s} \bigg)^2 + \frac{b}{1 - b} \bigg] \label{eq:a1ulcs1}\\
    1 &= s + \frac{rs}{(1 - s)^3}. \label{eq:a1ulcs2}
\end{align}
The radius of convergence $r$ is positive; it is less than 1 because the radius of convergence of the generating function of unlabeled simplex time-consistent galled trees is bounded above by the radius of convergence for the generating function of unlabeled binary trees, or $\rho \approx 0.40270$ (Section \ref{sec:ggt0galls}). We therefore have $\mathcal{A}^1(r^2)\leq\mathcal{A}^1(r)$. Hence, $\mathcal{A}^1(r^2)$ converges quickly and can be estimated by $\sum_{n=0}^N A^1_n r^{2n}$ for some small $N$; we use $N=25$. We calculate $\{A^1_n\}_{n=0}^N$ using recursion with $A_0^1=0$ and $A_1^1=1$ (the trivial tree). For $n\geq2$, 
\begin{equation} \label{eq:exactulAtildn}
    A^1_n =\frac{1}{2}\bigg[ \bigg( \sum_{m=1}^{n-1}A^1_{m}A^1_{n-m} \bigg) +\delta_{0, n \textrm{ mod } 2}A^1_{\frac{n}{2}}
     + \bigg( \sum_{k=3}^n(k-2)\sum_{\mathbf{c}\in C(n-1,k-1)}\prod_{i=1}^{k-1}A^1_{c_i} \bigg)
     + \delta_{1, n \textrm{ mod } 2} \sum_{a=1}^{\frac{n-1}{2}}\sum_{\mathbf{c}\in C(\frac{n-1}{2},a)}\prod_{i=1}^aA^1_{c_i}\bigg].
\end{equation}
The first two terms of eq.~(\ref{eq:exactulAtildn}), in which the tree does not have a root gall, are equal to those in the recursion for time-consistent galled trees \citep[eqs.~(15) and (16)]{AgranatTamirEtAl24a}. If a tree has a root gall, then the relevant part in the recursion for time-consistent galled trees is $\frac{1}{2}[\sum_{k=3}^n (k-2) \sum_{\mathbf{c}\in C(n,k)}\prod_{i=1}^k A_{c_i}] 
+ \frac{1}{2}[\sum_{a=1}^{\lfloor \frac{n-1}{2} \rfloor}\sum_{\mathbf{c}\in C_p(n,2a+1)} \prod_{i=1}^{a+1} A^1_{c_i}]$. In this expression, $k$ denotes the number of non-root nodes in the root gall, $k-2$ of which can be the reticulation node, keeping in mind the time-consistency condition. The second term is concerned with symmetric trees: $a$ is the number of non-root nodes in the root gall on each side of the reticulation node, and $C_p(a,b)$ is the set of palindromic compositions of $a$ into $b$ positive integers. 
    
To complete the derivation of the terms associated with root galls in eq.~(\ref{eq:exactulAtildn}), we note that in simplex time-consistent galled trees, the child of the reticulation node in the root gall (and any other gall) is a leaf. We must therefore distribute the remaining $n-1$ leaves among the children of the other $k-1$ non-root nodes in the root gall. In addition, because exactly 1 leaf descends from the reticulation node, symmetry between the two sides of the root gall can occur only if $n$ is odd. In symmetric trees, $(n-1)/{2}$ leaves descend from each side of the reticulation node, and the tree is determined by the children of the nodes in the root gall on one side (say, the left side) of the reticulation node. The values of ${A}^1_n$ appear in Table {\ref{table:E1ng}}.

Returning to the characteristic system, we now have three equations with three unknowns $(r,s,b)$: eq.~(\ref{eq:a1ulcs1}), eq.~(\ref{eq:a1ulcs2}), and $b=\sum_{n=1}^{25}A^1_n(r^2)^n$. From eq.~(\ref{eq:a1ulcs2}), $r = {(1-s)^4}/{s}$. Substituting into eq.~(\ref{eq:a1ulcs1}), we get an equation with only $s$ and $b$ as unknowns. Inserting  values $b < 1$ in that equation (because $A^1_n(r^2) < 1$), we numerically solve for $s$. Plugging these solutions into $r = {(1-s)^4}/{s}$, we obtain possible values for $r$, which we then insert into $\sum_{n=1}^{25}A^1_n(r^2)^n$. These must coincide with the values of $b$ we used to calculate $s$. This process gives us $r\approx 0.2344$, $s \approx 0.4349$, and $b \approx 0.0584$. The identification of $(r,s)$ completes the proof that $\phi$ belongs to the smooth implicit-function schema.
\end{proof}

\medskip 
From the \emph{asymptotics of implicit tree-like classes} theorem, to find the approximation of $A^1_n$ as $n \rightarrow \infty$, we must calculate $\phi_{ww}(r,s)$ and $\phi_t(r,s)$: 
\begin{align*}
\phi_{ww}(t,w) &= 1+ \frac{t(1+2w)}{(1-w)^4} \nonumber \\
\phi_t(t,w) &= 1+tA^{1'}(t^2)+\frac{w^2}{2(1-w)^2}+ {\frac{A^1(t^2)}{2[1-A^1(t^2)]}+\frac{t^2A^{1'}(t^2)}{[1-A^1(t^2)]^2}}.
\end{align*}
We approximate $A^{1'}(r^2)$ by inserting the estimate of $r$ into $\big[{\sum_{n=1}^NA^1_n(r^2)^n-\sum_{n=1}^NA_n^1(r^2-0.001)^n}\big]/{0.001}$. We get $\phi_{ww}(r,s) \approx 5.2993$, $\phi_t(r,s) \approx 1.6716$, and $A^{1'}(r^2) \approx 1.1308$. Finally, as $n \rightarrow \infty$, $A^1_n \sim [\delta/(2\sqrt{\pi})]n^{-\frac{3}{2}}r^{-n}$, with $r \approx 0.2344$ and $\delta \approx 0.3846$.

\section{Leaf-labeled simplex time-consistent galled trees} \label{sec:ll1tcgt}

We now move to leaf-labeled simplex time-consistent galled trees. As we observed for general galled trees, the leaf-labeled case is similar to the unlabeled case.

\subsection{No galls}
\label{sec:1compnogallslabeled}

Leaf-labeled simplex time-consistent galled trees with no galls are the same as leaf-labeled time-consistent galled trees with no galls and general galled trees with no galls: all are simply the leaf-labeled rooted binary trees covered in Section \ref{sec:nogalls}. 

\subsection{One gall}

In the case of one gall, using the symbolic method as in Section \ref{sec:1tcgt1g} but for labeled classes, we obtain  
\begin{equation}
     \mathfrak{E}_1^1 = \{\circ\} \times \Big[\underbrace{\mathfrak{U} \star \mathfrak{E}_1^1}_1 
    \textrm{  }\dot{\cup}\textrm{  }
    \underbrace{\{\square\} \star \text{SET}_2 \big(\text{SEQ}^+(\mathfrak{U}) \big)}_2 \Big].
\end{equation}
The exponential generating function 
$\mathfrak{E}_1^1(t)=\sum_{n=0}^\infty (e^1_{n,1}/n!)t^n$ is
\begin{align}
    \mathfrak{E}_1^1(t) &= \mathfrak{U}(t) \, \mathfrak{E}_1^1(t) + \frac{t}{2}\bigg(\frac{\mathfrak{U}(t)}{1 - \mathfrak{U}(t)}\bigg)^2 \nonumber \\
     &= \frac{t \, \mathfrak{U}(t)^2}{2[1 - \mathfrak{U}(t)]^3}.
\end{align}

For the limit, we have, as $t\rightarrow\frac{1}{2}^{-}$, 
\begin{equation}
\mathfrak{E}_1^1(t)\sim \frac{1}{4} (1-2t)^{-\frac{3}{2}}.
\label{eq:E1-labeled}
\end{equation}
For the asymptotic growth of $e_{n,1}^1$, similarly to the unlabeled case, as $n \rightarrow \infty$, by eq.~(\ref{eq:transferth}), $\gamma(\frac{3}{2})=\sqrt{\pi}/2$, and the Stirling approximation, we have
\begin{equation}
    e_{n,1}^1 = [t^n] \mathfrak{E}^1_1(t) \, n! \sim \frac{1}{4}  \frac{n^{\frac{1}{2}}}{\Gamma(\frac{3}{2})} \Big( \frac{1}{2} \Big)^{-n} n! 
    \sim \frac{\sqrt{2}}{2} n^{n+1} \Big( \frac{2}{e} \Big)^n.
\end{equation}
Comparing to eq.~(\ref{eq:en1asy}), we have $e_{n,1}^1 \sim \frac{1}{2} e_{n,1} \sim \frac{1}{2} \tilde{e}_{n,1}$. The first terms of $e_{n,1}^1$ are calculated in Section \ref{sec:exactll1tcgt}. They appear in Table \ref{table:e1ng}.

\subsection{Two galls}

For two galls, from the symbolic method, similarly to the unlabeled case (Section \ref{sec:1tcgt2g}), we get
\begin{equation}
 \mathfrak{E}_2^1 = \{\circ\} \times  
 \underbrace{\Big[\mathfrak{U}\star \mathfrak{E}_2^1}_1 
    \textrm{  }\dot{\cup}\textrm{  } 
    \underbrace{\text{SET}_2(\mathfrak{E}_1^1)}_2
    \textrm{  }\dot{\cup}\textrm{  } 
    \underbrace{\{\square\}\star \big(\text{SEQ}(\mathfrak{U}) \star \mathfrak{E}_1^1 \star \text{SEQ}(\mathfrak{U})\big) \star \text{SEQ}^{+}(\mathfrak{U})}_3 \Big].
\end{equation}
The exponential generating function $\mathfrak{E}_2^1(t)=\sum_{n=0}^\infty (e^1_{n,2}/n!)t^n$ then satisfies
\begin{align}
    \mathfrak{E}_2^1(t) &= \mathfrak{U}(t) \, \mathfrak{E}_2^1(t) + \frac{1}{2}\mathfrak{E}_1^1(t)^2  + t\left(\frac{\mathfrak{E}_1^1(t)}{[1 - \mathfrak{U}(t)]^2}\right)\left(\frac{\mathfrak{U}(t)}{1 - \mathfrak{U}(t)}\right) \nonumber \\
    &= \frac{\mathfrak{E}_1^1(t)^2}{2[1 - \mathfrak{U}(t)]}  + \frac{t \,  \mathfrak{E}_1^1(t) \, \mathfrak{U}(t)}{[1 - \mathfrak{U}(t)]^4}.
\end{align}
As $t \rightarrow \frac{1}{2}^-$, following eqs.~(\ref{eq:U-labeled}) and (\ref{eq:E1-labeled}), we obtain 
\begin{equation} 
\mathfrak{E}_2^1(t) \sim \frac{5(\frac{1}{2})^2}{8(1-2t)^{\frac{7}{2}}}.
\end{equation}

Similarly to the unlabeled case, as $n \rightarrow \infty$, eq.~(\ref{eq:transferth}), $\Gamma{(\frac{7}{2})} = \frac{15}{8}\sqrt{\pi}$, and the Stirling approximation yield
\begin{equation}
    e_{n,2}^1 = [t^n] \mathfrak{E}_2^1(t) \, n! \sim 
    \frac{5 (\frac{1}{2})^2}{8} \frac{n^{\frac{5}{2}}(\frac{1}{2})^{-n}}{\Gamma(\frac{7}{2})} n! = \frac{1}{3\sqrt{\pi}}n^{\frac{5}{2}}\Big({\frac{1}{2}}\Big)^{-n+2} n! \sim \frac{1}{4} \frac{\sqrt{2}}{3} n^{n+3} \Big( \frac{2}{e} \Big)^n.
\end{equation}
Comparing to eq.~(\ref{eq:en2asy}), we have $e_{n,2}^1 \sim \frac{1}{4} e_{n,2} \sim \frac{1}{4} \tilde{e}_{n,1}$. The first terms of $e_{n,2}^1$ are calculated in Section \ref{sec:exactll1tcgt}. They appear in Table \ref{table:e1ng}.

\subsection{Bivariate generating function}

Next, to find univariate generating functions $\mathfrak{E}^1_g(t)$ for specific values of $g$, we examine the bivariate exponential generating function $\mathfrak{G}^1(t, u) = \sum_{n=0}^\infty \sum_{m=0}^\infty ({e_{n,m}^1}/{n!}) t^n u^m,$ where $e^1_{n,m}$ denotes the number of leaf-labeled simplex time-consistent galled trees with $n$ leaves and $m$ galls. 

For the symbolic method, the tree can just be one node (component 1 in eq.~(\ref{eq:g1tllsm}) and Figure \ref{fig:1tcgtbivariate}); it can have no root gall and two leaf-labeled simplex time-consistent galled trees descended from the root (component 2), or it can have a root gall and two nonempty sequences of leaf-labeled simplex time-consistent galled trees leading from the root to the reticulation node (component 3). The reticulation node itself has a single leaf as its subtree. In symbolic notation, we have 
\begin{equation}
    \mathfrak{G}^1 = \underbrace{\{\square\}}_{1} 
    \textrm{  }\dot{\cup}\textrm{  }  
    \{\circ\} \times \big[ \underbrace{\text{SET}_2(\mathfrak{G}^1)}_{2} 
    \textrm{  }\dot{\cup}\textrm{  }  
    \underbrace{\mu \times \{\square\} \star \text{SET}_2 \big( \text{SEQ}^+(\mathfrak{G}^1) \big)}_{3} \big].
    \label{eq:g1tllsm}
\end{equation}

Writing this expression as a generating function, we get $$\mathfrak{G}^1(t, u) = t + \frac{1}{2}\mathfrak{G}^1(t, u)^2 + \frac{ut}{2}\left(\frac{\mathfrak{G}^1(t, u)}{1 - \mathfrak{G}^1(t, u)}\right)^2.$$

\subsection{Fixed number of galls} 
\label{sec:fixedll1tcgt}

As in the unlabeled case in Section \ref{sec:fixedul1tcgt}, we derive the generating function $\mathfrak{E}_g^1(t)$ for leaf-labeled simplex time-consistent galled trees with exactly $g$ galls by noting that, for each $g \geq 1$, 
$$\mathfrak{E}_g^1(t) = \frac{1}{g!} \left(\frac{\partial^g}{\partial u^g} \mathfrak{G}^1\right)(t, 0).$$ 
We actually completed most of the steps in the unlabeled case, as the terms in the leaf-labeled case all appear in the bivariate generating function for the unlabeled case; the leaf-labeled case does not have different cases for even and odd $g$. We copy the appropriate terms from eq.~(\ref{eq:gf3}):

\pagebreak

\begin{align}
\mathfrak{E}_g^1(t) & = \frac{1}{2[1-\mathfrak{U}(t)]}\Bigg[ \bigg( \sum_{\ell=1}^{g-1} \mathfrak{E}^1_{\ell}(t) \, \mathfrak{E}^1_{g-\ell}(t) \bigg) \nonumber \\ 
& \quad + t \sum_{k_1 + 2k_2 + \ldots \atop + (g-1)k_{g-1} = g-1} 
\binom{k_1+k_2 + \ldots+k_{g-1}}{k_1,k_2, \ldots ,k_{g-1}}
\frac{2\mathfrak{U}(t) + (\sum_{\ell=1}^{g-1} k_{\ell}) - 1}{[1 - \mathfrak{U}(t)]^{(\sum_{\ell=1}^{g-1} k_{\ell}) + 2}} \prod_{\ell=1}^{g-1} \mathfrak{E}^1_{\ell}(t)^{k_{\ell}} \Bigg].
\end{align}

As the terms that determine the asymptotic approximation in the unlabeled case are already present in the generating function in the leaf-labeled case, we can immediately deduce the approximation as $t$ approaches the radius of convergence of the generating function, $t \rightarrow \frac{1}{2}^-$:
\begin{equation}
    \mathfrak{E}_g^1(t) \sim  \frac{C_{2g-1}}{2^{2g-1}} \frac{1}{2^g (1-2t)^{2g-\frac{1}{2}}} = \frac{(4g-3)!!}{(2g)! \, 2^g (1-2t)^{2g-\frac{1}{2}}}.
\end{equation}
As $n \rightarrow \infty$, by eq.~(\ref{eq:transferth}) and the Stirling approximation,
\begin{equation}
    e_{n,g}^1 \sim \frac{2^{2g-1}}{(2g)!\sqrt{\pi}}n^{2g-\frac{3}{2}}\Big(\frac{1}{2}\Big)^{-n+g}n!
    \sim \frac{2^{g-1}\sqrt{2}}{(2g)!}n^{n+2g-1}\Big(\frac{2}{e}\Big)^n. \label{eq:1identical}
\end{equation}

Values of $e_{n,g}^1$ for small $n$ and $g$ are calculated in Section \ref{sec:exactll1tcgt}, and they appear in Table \ref{table:e1ng}.

\subsection{Arbitrary number of galls} 
\label{sec:allll1tcgt}

We denote the exponential generating function of leaf-labeled simplex time-consistent galled trees with an arbitrary number of galls by $\mathfrak{A}^1(t) = \sum_{n=0}^\infty (a^1_{n}/n!) t^n$. The tree has three possibilities: (1) it is trivial with only one node (component 1 in eq.~(\ref{eq:a1tll}) and Figure \ref{fig:1tcgtbivariate}); (2) it has no root gall, and the root node branches into two leaf-labeled simplex time-consistent galled trees (component 2); (3) it has a root gall with two nonempty sequences of nodes from the root to the reticulation node, each with a leaf-labeled simplex time-consistent galled tree as a subtree, and a leaf descends from the reticulation node (component 3). In symbolic notation, 
\begin{equation}
    \mathfrak{A}^1 = \underbrace{\{\square\}}_{1} 
    \textrm{  }\dot{\cup}\textrm{  } 
    \{\circ\} \times \big[\underbrace{\text{SET}_2(\mathfrak{A}^1)}_{2} 
    \textrm{  }\dot{\cup}\textrm{  } 
    \underbrace{\{\square\} \star \text{SET}_2 \big(\text{SEQ}^+(\mathfrak{A}^1) \big)}_{2} \big].
    \label{eq:a1tll}
\end{equation}
The exponential generating function is 
\begin{equation}
    \mathfrak{A}^1(t) = t + \frac{1}{2}\mathfrak{A}^1(t)^2 + \frac{t}{2} \bigg(\frac{\mathfrak{A}^1(t)}{1 - \mathfrak{A}^1(t)}\bigg)^2.
    \label{eq:llA1tgf}
\end{equation}

Following Section \ref{sec:allul1tcgt}, we find the asymptotic approximation using the asymptotics of implicit tree-like classes theorem.
\begin{proposition}
    $\mathfrak{A}^1(t)$ belongs to the smooth implicit-function schema.
\end{proposition}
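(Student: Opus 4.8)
The plan is to mirror the proof given above for the unlabeled generating function $\mathcal{A}^1(t)$, checking each requirement of the smooth implicit-function schema in turn; the leaf-labeled case is in fact easier, since the exponential generating function in eq.~(\ref{eq:llA1tgf}) has no term in $t^2$, so no auxiliary quantity analogous to $b=\mathcal{A}^1(r^2)$ enters.

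First I would dispose of the elementary conditions. The series $\mathfrak{A}^1(t)$ is analytic at $0$ because its coefficients are dominated by those of the leaf-labeled general galled trees $\mathfrak{\tilde{A}}(t)$ (since $SC\subseteq G$), which has positive radius of convergence by eq.~(\ref{eq:sqrt17}); the coefficients $a^1_n$ are nonnegative because they count a class of objects; and $a^1_0=0$ because there is no simplex time-consistent galled tree with no leaves. Next I would set $\phi(t,w)=t+\tfrac12 w^2+\tfrac{t}{2}\bigl(\tfrac{w}{1-w}\bigr)^2$, so that $\mathfrak{A}^1(t)=\phi\bigl(t,\mathfrak{A}^1(t)\bigr)$ holds by eq.~(\ref{eq:llA1tgf}). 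The remaining structural conditions are then immediate: $\phi$ is analytic around $(0,0)$, being a polynomial in $t$ times a rational function of $w$ that is regular at $w=0$; the expansion $\bigl(\tfrac{w}{1-w}\bigr)^2=\sum_{k\geq 2}(k-1)w^k$ shows that every coefficient $S_{n,k}$ is nonnegative; $S_{0,1}=0\neq 1$; $S_{0,2}=\tfrac12>0$ supplies a positive coefficient of $w$-degree at least $2$, so $\phi$ is genuinely nonlinear in $w$; and $S_{0,0}=\phi(0,0)=0$.

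The only step with real content is producing a solution of the characteristic system inside the region of analyticity. Here $\phi_w(t,w)=w+\tfrac{tw}{(1-w)^3}$, so eq.~(\ref{eq:charsys2}) reads $1=s+\tfrac{rs}{(1-s)^3}$, which I would solve for $r=\tfrac{(1-s)^4}{s}$ and substitute into eq.~(\ref{eq:charsys1}); after clearing denominators the system collapses to the single polynomial equation $2s^2=2(1-s)^4+s^3+s^2(1-s)^2$ in $s$ alone. This is precisely where the unlabeled argument needed a numerical bootstrap through $\mathcal{A}^1(r^2)$; no such device is required here. Since the right-hand side equals $2$ at $s=0$ and $1$ at $s=1$, while the left-hand side equals $0$ and $2$ respectively, the two sides cross on $(0,1)$, so the intermediate value theorem yields a root $s_0\in(0,1)$ with $r_0=(1-s_0)^4/s_0>0$; a brief numerical evaluation then records the values of $r_0$ and $s_0$, and since $s_0<1$ the pair $(r_0,s_0)$ lies in the domain of analyticity of $\phi$, which completes the verification.

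The main obstacle, such as it is, is confined to this last existence-of-solution check, and it is mild: eliminating $r$ turns the characteristic system into a one-variable polynomial with a visible sign change, in contrast to the unlabeled setting, where the appearance of $\mathcal{A}^1(r^2)$ forces the three-equation numerical procedure used earlier. Once the schema is established, the asymptotic form $a^1_n/n!\sim[\delta/(2\sqrt{\pi})]n^{-3/2}r_0^{-n}$ follows directly from the \emph{asymptotics of implicit tree-like classes} theorem, with $\delta=\sqrt{2r_0\,\phi_t(r_0,s_0)/\phi_{ww}(r_0,s_0)}$ computed from $\phi_{ww}(t,w)=1+\tfrac{t(1+2w)}{(1-w)^4}$ and $\phi_t(t,w)=1+\tfrac{w^2}{2(1-w)^2}$.
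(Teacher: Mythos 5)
Your proof is correct and follows essentially the same route as the paper: the same $\phi(t,w)=t+\tfrac12 w^2+\tfrac t2\bigl(\tfrac{w}{1-w}\bigr)^2$, the same verification of the schema conditions, and the same elimination $r=(1-s)^4/s$ in the characteristic system. The only difference is that where you invoke the intermediate value theorem on the resulting polynomial (which is correct; it factors as $(3s^2-6s+2)(s^2-s+1)$), the paper solves it in closed form, obtaining $\omega=(3-\sqrt3)/3$ and $\tau=(3+\sqrt3)/18$, which it then uses for the exact constants in the asymptotic formula.
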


\begin{proof}
    To show that $\mathfrak{A}^1(t)$ belongs to the smooth implicit-function schema we prove the set of conditions set forth in Section \ref{sec:allul1tcgt}. (1) $\mathfrak{A}^1(t)$ is analytic at $0$ from the form of eq.~(\ref{eq:llA1tgf}). (2) $a_n^1(t) \geq 0$ for all $n \geq 0$, as a count of a class of objects. (3) $a_0^1 = 0$, as no simplex time-consistent galled-trees exist with no leaves.
\begin{enumerate}[ {(}1{)} ]    \setcounter{enumi}{3}

\item We define $\psi(t,w) = \sum_{n=0}^\infty \sum_{k = 0}^\infty s_{n,k} t^n w^k$ as 
\begin{equation*}
\psi(t,w) = \sum_{n=0}^\infty \sum_{k=0}^\infty s_{n,k} t^n w^k
= t + \frac{1}{2}w^2 + \frac{t}{2} \bigg(\frac{w}{1-w}\bigg)^2,
\end{equation*}
so that $\mathfrak{A}^1(t)=\psi \big(t,\mathfrak{A}^1(t) \big)$. We also have
\begin{enumerate}
\item $\psi$ is analytic around $(0,0)$ because it is a rational function in $w$ and $t$.
\item The $s_{n,k}$ are all nonnegative from the definition of $\psi$.
\item $s_{0,1}\neq 1$, as it is the coefficient of $w$, $s_{0,1}=0$.
\item $s_{0,2} > 0$, as it is the coefficient of $t^0w^2$, $s_{0,2}= \frac{1}{2}$.
\item $s_{0,0}=0$, because $\psi(0,0)=0$.
\item We demonstrate below that the characteristic system has a solution.
\end{enumerate}     
\end{enumerate}

The characteristic system is:
\begin{align}
    \omega &= \tau+\frac{1}{2}\omega^2+\frac{\tau}{2}\tau \bigg(\frac{\omega}{1-\omega}\bigg)^2, \label{eq:eq1implicitll} \\
    1 &= \omega+\tau\frac{\omega}{(1-\omega)^3}. \label{eq:eq2implicitll}
\end{align}
 
From eq.~(\ref{eq:eq2implicitll}), we get $\tau = {(1-\omega)^4}/{\omega}$, from which eq.~(\ref{eq:eq1implicitll}) gives us
\begin{align}
    \omega &= \frac{3-\sqrt{3}}{3} \approx 0.4226, \\
\label{eq:tau}
    \tau   &= \frac{3+\sqrt{3}}{18} \approx 0.2629.
\end{align}
Therefore, $\mathfrak{A}^1(t)$ belongs to the smooth implicit-function schema.
\end{proof}

\medskip
According to the \emph{asymptotics of implicit tree-like classes} theorem, as $n \rightarrow \infty$, ${a^1_n} \sim [{\delta}/({2\sqrt{\pi}})]n^{-\frac{3}{2}}\tau^{-n} n!$, where $\delta = \sqrt{{2\tau\psi_t(\tau,\omega)}/{\psi_{ww}(\tau,\omega)}}$. We have $\psi_t(t,w) = 1+{w^2}/[{2(1-w)^2}]$ and $\psi_{ww}(t,w) = 1+t(1+2w)/{(1-w)^4}$, so that by  Stirling's approximation,
\begin{equation}
    a^1_n \sim \frac{\delta}{2\sqrt{\pi}}n^{-\frac{3}{2}}\tau^{-n}n! \sim \frac{\delta}{\sqrt{2}}n^{n-1} \Big(\frac{\tau^{-1}}{e}\Big) ^{n}, 
    \label{eq:tauinv}
\end{equation}
with $\delta = (9-\sqrt{3}) \sqrt{3(9+\sqrt{3})}  /117 \approx 0.3525$ and $\tau = (3+\sqrt{3})/18 \approx 0.2629$ as in eq.~(\ref{eq:tau}).

The first values of $a^1_n$ are calculated in Section \ref{sec:exactll1tcgt} as the sum $\sum_{g=0}^{\lfloor \frac{n-1}{2} \rfloor}e^1_{n,g}$. They appear in Table {\ref{table:e1ng}}.

\section{Numerical computations by recursive enumeration} 
\label{sec:exact}

We now compute the exact numbers of phylogenetic networks in the four classes for which we have studied generating functions and asymptotics: unlabeled general galled trees (Section \ref{sec:ulggt}), leaf-labeled general galled trees (Section \ref{sec:llggt}), unlabeled simplex time-consistent galled trees (Section \ref{sec:ul1tcgt}), and leaf-labeled simplex time-consistent galled trees (Section \ref{sec:ll1tcgt}). We perform these computations using recursions.

\subsection{Unlabeled general galled trees, $\tilde{E}_{n,g}$} \label{sec:exactulggt}

In eqs.~(26) and (27) of \cite{AgranatTamirEtAl24a}, we calculated a recursion for $E_{n,g}$, the number of unlabeled time-consistent galled trees with $n$ leaves and $g$ galls (eqs. (26) and (27) there). The corresponding number of unlabeled general galled trees $\tilde{E}_{n,g}$ is the sum of $E_{n,g}$ and a term for the additional cases allowed with general galled trees but not time-consistent galled trees.  

We then have $\tilde{E}_{1,0}=1$, and for $n \geq 2$, $0 \leq g \leq {n-1}$,
\begin{align}
    \tilde{E}_{n,g} &=  \frac{1}{2}\bigg{[}\bigg{(}\sum_{\mathbf{c} \in C(n,2)}\sum_{\mathbf{d} \in C(g+2,2)}\prod_{i=1}^2\tilde{E}_{c_i,d_{i}-1}\bigg{)} 
    + (\delta_{0, n \textrm{ mod } 2})(\delta_{0, g \textrm{ mod } 2}) \tilde{E}_{\frac{n}{2},\frac{g}{2}} \nonumber \\
    & \quad + \bigg{(}\sum_{k=3}^{n}(k-2)\sum_{\mathbf{c}\in C(n,k)}\sum_{\mathbf{d}\in C(g-1+k,k)}\prod_{i=1}^k\tilde{E}_{c_i,d_{i}-1}\bigg{)} \nonumber\\
    & \quad + \bigg{(}\sum_{a=1}^{\lfloor \frac{n-1}{2} \rfloor}\sum_{\mathbf{c} \in C_p(n,2a+1)}\sum_{\mathbf{d} \in C_p(g-1+2a+1,2a+1)}\prod_{i=1}^{a+1} \tilde{E}_{c_i,d_{i}-1}\bigg{)}\bigg{]} \nonumber \\
    & \quad + \sum_{k=2}^n\sum_{\mathbf{c}\in C(n,k)}\sum_{\mathbf{d}\in C(g-1+k,k)}\prod_{i=1}^k\tilde{E}_{c_i,d_i-1}.
    \label{eq:Etildngrec}
\end{align}
In this expression, recall that $C(a,b)$ is the number of compositions of $a$ into $b$ parts, and $C_p(a,b)$ is the number of palindromic compositions. The last term is the term that is additional in relation to the time-consistent case; this term traverses possible numbers $k$ of non-root nodes in the root gall, distributing the $n$ leaves among their children according to a composition $\mathbf{c}$ in the set of compositions of $n$ into $k$ parts, $C(n,k)$. We distribute the remaining $g-1$ galls (one of the $g$ galls is the root gall) across these children according to a composition $\mathbf{d}$ in $C(g-1+k,k)$, where the $k$ added to $g-1$ is due to the technicality that compositions have positive parts, and it is possible for no galls to be descended from a non-root node in the root gall; the added $k$ is deducted in the $k$ appearances of $-1$ in expressions $\tilde{E}_{c_i,d_i-1}$. In the final term of eq.~(\ref{eq:Etildngrec}), the position of the reticulation node is specified because the term represents the case in which one of the sequences of nodes from the root of the root gall to the reticulation node is empty. 

The number of unlabeled general galled trees with $n$ leaves and any number of galls, $\tilde{A}_n$, is the sum of the numbers of unlabeled general galled trees with $n$ leaves and $g$ galls for all possible numbers of galls, or $\tilde{A}_n = \sum_{g=0}^{n-1}\tilde{E}_{n,g}$. The numerical values in Table \ref{table:Etildng} suggest a pattern in the numbers of general galled trees with the maximum number of galls.
\begin{proposition} \label{prop:enn-1cn-1}
The number of unlabeled general galled trees with $n$ leaves and the maximal number of galls, $n-1$, is a Catalan number, $\tilde{E}_{n,n-1} = C_{n-1}$.
\end{proposition}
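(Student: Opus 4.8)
The plan is to specialize the recursion in eq.~(\ref{eq:Etildngrec}) to the extremal case $g = n-1$ and show that all but one of its summands vanish, leaving a recurrence that is immediately recognizable as that of the (shifted) Catalan numbers. Throughout I will use the bound recorded at the start of Section~\ref{sec:ulggt}, namely $\tilde E_{m,h} = 0$ whenever $h > m-1$; since also a galled tree has a nonnegative number of galls, a factor $\tilde E_{c_i,\,d_i-1}$ appearing in eq.~(\ref{eq:Etildngrec}) can be nonzero only when $1 \le d_i \le c_i$, so that any nonzero product over a composition $\mathbf c \in C(n,k)$ forces $\sum_i d_i \le \sum_i c_i = n$.

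First I would dispose of the four summands grouped inside the factor $\tfrac12[\,\cdot\,]$. In the first summand $\mathbf d$ ranges over $C(g+2,2)$, so $\sum_i d_i = g+2 = n+1 > n$; in the third, $\mathbf d$ ranges over $C(g-1+k,k)$ with $k \ge 3$, so $\sum_i d_i = n-2+k \ge n+1$; in the fourth, the palindromic composition $\mathbf d$ has $\sum_i d_i = g+2a = n-1+2a \ge n+1$ because $a \ge 1$; and the Kronecker-delta summand requires $g$ and $n$ to have the same parity, which fails for $g = n-1$ (and in any case $g/2 = (n-1)/2 > n/2-1$). By the observation above, each of these is $0$. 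The very same arithmetic kills every $k \ge 3$ contribution to the final summand $\sum_{k=2}^{n}\sum_{\mathbf c}\sum_{\mathbf d}\prod_i \tilde E_{c_i,d_i-1}$, since there $\sum_i d_i = g-1+k = n-2+k$. What survives is exactly the $k=2$ term of that last summand, where $\mathbf d$ runs over $C(g-1+2,2) = C(n,2)$; now $\sum_i d_i = n = \sum_i c_i$ combined with $d_i \le c_i$ forces $d_i = c_i$, and the recursion collapses to $\tilde E_{n,n-1} = \sum_{\mathbf c \in C(n,2)} \tilde E_{c_1,c_1-1}\,\tilde E_{c_2,c_2-1}$.

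Setting $b_n := \tilde E_{n,n-1}$, the above says $b_1 = \tilde E_{1,0} = 1$ and $b_n = \sum_{j=1}^{n-1} b_j\, b_{n-j}$ for $n \ge 2$. This is the defining convolution for shifted Catalan numbers: the generating function $B(t) = \sum_{n\ge1} b_n t^n$ satisfies $B(t) = t + B(t)^2$, whose unique solution with $B(0) = 0$ is $B(t) = \tfrac12\bigl(1 - \sqrt{1-4t}\,\bigr) = \sum_{n\ge1} C_{n-1} t^n$. Comparing coefficients gives $\tilde E_{n,n-1} = C_{n-1}$, as claimed.

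I do not anticipate a genuine obstacle here; the work is almost entirely bookkeeping, and the only places demanding care are keeping track of the index shifts hard-wired into eq.~(\ref{eq:Etildngrec}) (the $d_i-1$ inside the factors and the padding $+k$ in the ranges $C(g-1+k,k)$) and invoking the cutoff $\tilde E_{m,h} = 0$ for $h \ge m$ at the right moments. It is worth noting, as a sanity check, what the surviving recursion says combinatorially: a galled tree attaining the maximal $n-1$ galls must have a root gall of the minimal triangular shape---root, one tree node, and the reticulation node, with a single leaf or subtree hanging from each of the two non-root nodes---and the two hanging subtrees must themselves be maximal; this is precisely the recursive structure underlying the Catalan number $C_{n-1}$, the two subtree slots being distinguishable because the reticulation node is not interchangeable with the adjacent tree node.
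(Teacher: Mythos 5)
Your proof is correct, but it takes a genuinely different route from the paper. The paper proves Proposition \ref{prop:enn-1cn-1} bijectively: it observes that a general galled tree attains $n-1$ galls exactly when every internal structure is a triangular (3-node) gall, and that the intrinsic asymmetry within such a gall---the reticulation node versus the hybridizing node---plays precisely the role of the left/right distinction in a plane rooted binary tree, yielding a bijection with plane binary trees on $n$ leaves and hence the count $C_{n-1}$. You instead specialize the recursion of eq.~(\ref{eq:Etildngrec}) to $g=n-1$, use the cutoff $\tilde E_{m,h}=0$ for $h\ge m$ to kill every summand except the $k=2$ piece of the final sum, and recognize the surviving convolution $b_n=\sum_{j=1}^{n-1}b_jb_{n-j}$, $b_1=1$, as the shifted Catalan recurrence. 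Your bookkeeping checks out, including the slightly delicate palindromic term (where the product runs over only $a+1$ of the $2a+1$ parts, but palindromicity extends the bound $d_i\le c_i$ to all parts, so the total-sum argument still applies) and the parity argument for the Kronecker-delta term. The trade-off: the paper's bijection explains \emph{why} a Catalan number appears and transfers immediately to the leaf-labeled corollary ($\tilde e_{n,n-1}=C_{n-1}n!$), whereas your derivation is mechanical but has the side benefit of confirming that the recursion's extremal behavior is consistent with the structural claim---indeed your closing ``sanity check'' paragraph is essentially the paper's proof in miniature, recovering the triangular-gall decomposition and the observation that the two subtree slots are distinguishable because the reticulation node is not interchangeable with the adjacent tree node.
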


\begin{proof}
We show a bijection between (non-plane) general galled trees with $n$ leaves and $n-1$ galls and plane rooted binary trees with $n$ leaves and $n-1$ internal nodes; the latter is counted by Catalan number $C_{n-1}$~\citep[Theorem 1.5.1(ii), p.~8]{Stanley2015}. First, a general galled tree has $n-1$ galls if and only if all galls have three nodes and all internal nodes are part of galls. The tree contains only 3-node galls and leaves. 

The galls are asymmetric: the two descendants of a top node are nodes of two different types, a hybridizing node and a reticulation node. This asymmetry in the gall corresponds to the distinction between left and right descendants of a tree node in an associated plane rooted binary tree. Because the general galled trees we consider are non-plane, we specify, without loss of generality, that the reticulation node is to the left of the top node in all galls. Therefore, a bijection exists between general galled trees with $n$ leaves and $n-1$ nodes and plane rooted binary trees with $n-1$ internal nodes, in that each gall can be uniquely assigned to an internal node in the binary tree.
\end{proof}

\medskip
Figure \ref{fig:enn-1cn-1} shows an example of the proposition in the case of $n=4$.

  \begin{figure}[tb]
    \centering
    \includegraphics[width=13cm]{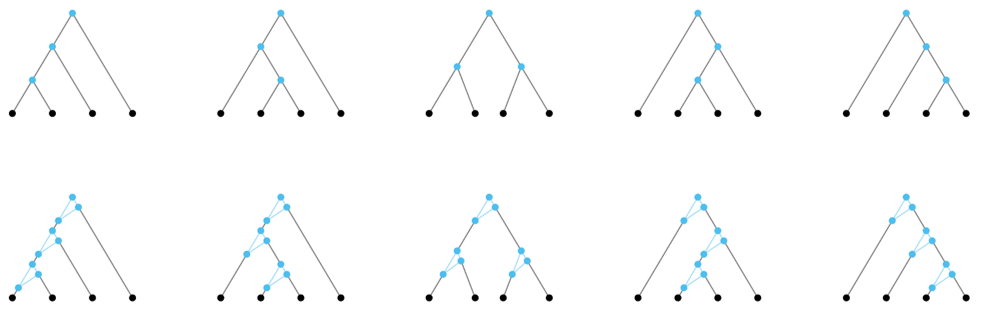}
    \caption{The bijection between plane rooted binary trees with 4 leaves (top) and non-plane general galled trees with 4 leaves and 3 galls (bottom). The bijection matches internal nodes in a plane rooted binary tree with the galls in the corresponding general galled tree (marked in light blue).}
    \label{fig:enn-1cn-1}
  \end{figure}
  
\subsection{Leaf-labeled general galled trees, $\tilde{e}_{n,g}$} \label{sec:exactllggt}

A recursion for $e_{n,g}$, the number of leaf-labeled time-consistent galled trees with $n$ leaves and $g$ galls appears in eq.~(56) of \cite{AgranatTamirEtAl25}. The number of general galled trees $\tilde{e}_{n,g}$ is obtained by adding a term to that recursion.

We have $\tilde{e}_{1,0}=1$, and for $n\geq 2$, $0 \leq g\leq n-1 $,
\begin{align}
    \tilde{e}_{n,g} &= \frac{1}{2}\bigg{[}\bigg(\sum_{m=1}^{n-1} \binom{n}{m} \sum_{\ell=0}^g \tilde{e}_{m,\ell}\tilde{e}_{n-m,g-\ell}\bigg) \nonumber \\
    & + \bigg(\sum_{k=3}^{n}(k-2)\sum_{\mathbf{c}\in C(n,k)}\sum_{\mathbf{d}\in C(g-1+k,k)}\binom{n}{c_1,c_2,\ldots,c_k}\prod_{i=1}^k \tilde{e}_{c_i,d_{i}-1}\bigg)\bigg{]} \nonumber \\
    & + \bigg(\sum_{k=2}^{n}\sum_{\mathbf{c}\in C(n,k)}\sum_{\mathbf{d}\in C(g-1+k,k)}\binom{n}{c_1,c_2,\ldots,c_k}\prod_{i=1}^k \tilde{e}_{c_i,d_{i}-1}\bigg).
    \label{eq:etildngrec}
\end{align}
The extra term is identical to that in the unlabeled case in eq.~(\ref{eq:Etildngrec}), with the addition of a multinomial coefficient to account for all possible ways in which the $n$ leaf labels can be distributed across the $k$ subtrees descended from non-root nodes in the root gall.

The number of leaf-labeled general galled trees with $n$ leaves and any number of galls, $\tilde{a}_n$, sums leaf-labeled general galled trees with $n$ leaves and $g$ galls for all possible numbers of galls, or $\tilde{a}_n = \sum_{g=0}^{n-1}\tilde{e}_{n,g}$. We obtain a corollary of Proposition \ref{prop:enn-1cn-1} for the leaf-labeled case, verifying a pattern visible in Table \ref{table:etildng}.
\begin{corollary}
The number of leaf-labeled general galled trees with $n$ leaves and the maximal number of galls, $n-1$, satisfies $\tilde{e}_{n,n-1} = C_{n-1} n! = (2n-2)!/(n-1)!$. 
\end{corollary}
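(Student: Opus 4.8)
The plan is to reduce the labeled count to the unlabeled count $\tilde E_{n,n-1}=C_{n-1}$ established in Proposition~\ref{prop:enn-1cn-1}, by showing that every unlabeled general galled tree with $n$ leaves and $n-1$ galls is \emph{rigid}, i.e.\ has trivial automorphism group. Granting this, each of the $C_{n-1}$ unlabeled shapes admits exactly $n!$ pairwise non-isomorphic leaf-labelings, so $\tilde e_{n,n-1}=n!\,\tilde E_{n,n-1}=n!\,C_{n-1}$, and the stated closed form follows from $n!\,C_{n-1}=n!\cdot\frac{1}{n}\binom{2n-2}{n-1}=(n-1)!\binom{2n-2}{n-1}=\frac{(2n-2)!}{(n-1)!}$.

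To prove rigidity I would use the structure recalled in the proof of Proposition~\ref{prop:enn-1cn-1}: a general galled tree attains $n-1$ galls precisely when every gall is a three-node gall and every internal node lies in a gall, and in that case there is a bijection with plane rooted binary trees on $n$ leaves sending each gall to an internal node, with the reticulation-side child of the gall playing the role of (say) the left child and the hybridizing-side child the role of the right child. Any automorphism of such an unlabeled galled tree must fix the root (the unique node of in-degree $0$), carry galls to galls (they are the reticulation cycles), and fix the reticulation node of each gall (the unique node of in-degree $2$ within it), hence preserve the two roles in each three-node gall; under the bijection it therefore induces an automorphism of the associated plane rooted binary tree, which can only be the identity since a rooted tree with ordered children is rigid. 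Thus every maximal-gall shape is rigid, as claimed. As an illustration, Figure~\ref{fig:enn-1cn-1} shows the $n=4$ case: the $C_3=5$ unlabeled shapes each receive $4!=24$ labelings, for $\tilde e_{4,3}=24\cdot 5=120=6!/3!$.

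I expect the only delicate point to be ruling out automorphisms that permute isomorphic galled subtrees descending from a common gall; this is exactly what the plane-tree bijection of Proposition~\ref{prop:enn-1cn-1} handles, since the reticulation/hybridizing asymmetry of a three-node gall already pins down a canonical plane embedding of the entire (non-plane) network. As an alternative that sidesteps automorphism bookkeeping, one can prove $\tilde e_{n,n-1}=n!\,C_{n-1}$ by strong induction directly from the recursion~(\ref{eq:etildngrec}) with $g=n-1$: the no-root-gall sum and the time-consistent root-gall sum (the $\frac12[\cdots]$ bracket) both vanish because each would force a subtree on $c$ leaves carrying at least $c$ galls, while in the remaining sum only $k=2$ survives (for $k\geq 3$ one again needs more galls than leaves in some subtree), and for $k=2$ the equalities $\mathbf c=\mathbf d$ are forced, so the sum collapses via the multinomial coefficient, the inductive hypothesis $\tilde e_{c,c-1}=c!\,C_{c-1}$, and the Catalan recurrence $\sum_{c_1+c_2=n}C_{c_1-1}C_{c_2-1}=C_{n-1}$ to $n!\,C_{n-1}$. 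This second route trades the geometric argument for a short bookkeeping computation.
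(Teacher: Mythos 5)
Your primary argument is correct and matches the paper's proof: the paper likewise derives $\tilde{e}_{n,n-1}=\tilde{E}_{n,n-1}\,n!=C_{n-1}\,n!$ by passing through the bijection of Proposition~\ref{prop:enn-1cn-1} with plane rooted binary trees, with your rigidity discussion simply making explicit why each unlabeled shape admits exactly $n!$ distinct labelings (the reticulation/hybridizing asymmetry of each three-node gall). Your alternative induction on the recursion~(\ref{eq:etildngrec}) is a sound independent verification but is not needed.
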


\begin{proof}
The result adds leaf labels to the bijectively associated plane rooted binary trees in Proposition \ref{prop:enn-1cn-1}; the leaf labels are inherited by the associated general galled trees. The number of leaf-labeled plane rooted binary trees with $n$ leaves is $C_{n-1} n!$, so that $\tilde{e}_{n,n-1} = \tilde{E}_{n,n-1}n!$.
\end{proof}

\subsection{Unlabeled simplex time-consistent galled trees, $E^1_{n,g}$} \label{sec:exactul1tcgt}

For simplex time-consistent galled trees, the recursion is similar to that of time-consistent galled trees (eq.~(\ref{eq:Etildngrec}). However, because exactly one leaf and zero galls descend from reticulation nodes, $n-1$ leaves are distributed across $k-1$ rather than $k$ subtrees descended from a root gall. 

The simplex condition permits symmetry in a root gall only in trees with odd numbers of leaves and odd numbers of galls: one leaf is descended from the reticulation node, and one gall is the root gall. In this case, each side of the reticulation node has $\frac{n-1}{2}$ leaves and $\frac{g-1}{2}$ galls.

The base case of the recursion is $E^1_{1,0}=1$, and for $n\geq2$, $0 \leq g\leq \lfloor \frac{n-1}{2} \rfloor$,
\begin{align}
    {E}^1_{n,g} &=  \frac{1}{2}\bigg{[}\bigg{(}\sum_{\mathbf{c} \in C(n,2)}\sum_{\mathbf{d} \in C(g+2,2)}\prod_{i=1}^2{E}^1_{c_i,d_{i}-1}\bigg{)} 
    + (\delta_{0, n \textrm{ mod } 2})(\delta_{0, g \textrm{ mod } 2}) {E}^1_{\frac{n}{2},\frac{g}{2}} \nonumber \\
    & \quad + \bigg{(}\sum_{k=3}^{n}(k-2)\sum_{\mathbf{c}\in C(n-1,k-1)}\sum_{\mathbf{d}\in C\big(g-1+(k-1),k-1 \big)}\prod_{i=1}^{k-1}{E}^1_{c_i,d_{i}-1}\bigg{)} \nonumber\\
    & \quad + (\delta_{1, n \textrm{ mod } 2})(\delta_{1, g \textrm{ mod } 2})
    \bigg{(}\sum_{a=1}^{\frac{n-1}{2} }\sum_{\mathbf{c} \in C(\frac{n-1}{2},a)}\sum_{\mathbf{d} \in C(\frac{g-1}{2}+a,a)}\prod_{i=1}^{a} {E}^1_{c_i,d_{i}-1}\bigg{)}\bigg{]}.
    \label{eq:E1ngrec}
\end{align}

The number of unlabeled simplex time-consistent galled trees with $n$ leaves and any number of galls, $A^1_n$, is the sum of the numbers of unlabeled simplex time-consistent galled trees with $n$ leaves and $g$ galls for all possible numbers of galls ($0,1,...,\lfloor (n-1)/2 \rfloor$), or $A^1_n = \sum_{g=0}^{\lfloor \frac{n-1}{2} \rfloor}E^1_{n,g}$.

We observe a pattern in the numerical values in Table \ref{table:E1ng}, and verify it in the following proposition.
\begin{proposition} \label{prop:enn-1/2un+1/2}
The number of unlabeled simplex time-consistent galled trees with $n$ leaves, $n$ odd, and the maximal number of galls, $\frac{n-1}{2}$, is equal to the number of non-plane unlabeled rooted binary trees with $\frac{n+1}{2}$ leaves, ${E}^1_{n,(n-1)/2}=U_{(n+1)/2}$.
\end{proposition}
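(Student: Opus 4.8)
The plan is to prove a rigid structural description of the extremal objects and then match their gall-counting generating function to a shift of the Wedderburn--Etherington series $\mathcal{U}(t)$.

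\textbf{Step 1 (rigidity of the extremal trees).} Recall that every time-consistent galled tree $T$ --- in particular every simplex one --- with $n(T)$ leaves and $g(T)$ galls obeys $n(T)\ge 2\,g(T)+1$. First I would show, by strong induction on $n$, that a simplex time-consistent galled tree with an odd number $n$ of leaves attains the maximal gall count $(n-1)/2$ exactly when it has the following recursive shape: if $n=1$ it is a single leaf; if $n\ge 3$ it has a root gall whose two arms each consist of a single tree node, the reticulation node carries a single leaf, and the two off-arm subtrees are themselves maximal simplex time-consistent galled trees (each with an odd number of leaves, at most $n-2$). The argument: for $n\ge 3$ the root cannot be a tree node with two galled subtrees $S_1,S_2$, since then $n=n(S_1)+n(S_2)\ge (2g(S_1)+1)+(2g(S_2)+1)=2g+2>n$; hence the root is the top of a root gall, whose arms are nonempty sequences (by time-consistency) $(A_1,\dots,A_p)$ and $(B_1,\dots,B_q)$ of simplex time-consistent galled subtrees, together with the single leaf below the reticulation node. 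Then $n=1+\sum_i n(A_i)+\sum_j n(B_j)\ge 1+p+q+2\bigl(\sum_i g(A_i)+\sum_j g(B_j)\bigr)=1+p+q+2(g-1)$, which is $\ge 2g+1$, with equality forcing $p=q=1$ and forcing each arm subtree to meet its own bound $n(\cdot)=2\,g(\cdot)+1$. I expect this step --- showing that removing the root gall, or lengthening an arm beyond one node, or using a non-extremal arm subtree each strictly lowers the achievable gall count --- to be the main obstacle; the rest is a short computation.

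\textbf{Step 2 (generating function).} Let $M_g$ be the number of unlabeled simplex time-consistent galled trees with $g$ galls and the maximal $2g+1$ leaves, so $M_{(n-1)/2}=E^1_{n,(n-1)/2}$ for $n$ odd, and set $m(z)=\sum_{g\ge 0}M_g z^g$. Step~1 says these trees form the class specified by $\mathcal{M}=\{\square\}\ \dot{\cup}\ \mu\times\text{MSET}_2(\mathcal{M})$, where $\mu$ marks the root gall, each arm having collapsed to a single subtree and the two arms being unordered since the network is non-plane; hence
\[
m(z)=1+\tfrac{z}{2}\bigl(m(z)^2+m(z^2)\bigr).
\]

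\textbf{Step 3 (identification).} Since $\mathcal{U}$ has no constant term ($U_0=0$) and satisfies $\mathcal{U}(z)=z+\tfrac12\bigl(\mathcal{U}(z)^2+\mathcal{U}(z^2)\bigr)$, we have $\mathcal{U}(z)^2+\mathcal{U}(z^2)=2\bigl(\mathcal{U}(z)-z\bigr)$, so $\mathcal{U}(z)/z$ is a power series with constant term $U_1=1$ and $1+\tfrac{1}{2z}\bigl(\mathcal{U}(z)^2+\mathcal{U}(z^2)\bigr)=1+\bigl(\mathcal{U}(z)-z\bigr)/z=\mathcal{U}(z)/z$; that is, $\mathcal{U}(z)/z$ solves the functional equation of Step~2. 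Both $m(z)$ and $\mathcal{U}(z)/z$ have constant term $1$ ($M_0=U_1=1$), and this functional equation determines every higher coefficient recursively from the lower ones, so $m(z)=\mathcal{U}(z)/z$ and therefore $M_g=[z^{g+1}]\mathcal{U}(z)=U_{g+1}$. Taking $g=(n-1)/2$ for odd $n$ gives $E^1_{n,(n-1)/2}=U_{(n+1)/2}$, as claimed. Alternatively, Step~1 can be read directly as an explicit bijection in the spirit of Proposition~\ref{prop:enn-1cn-1}: send the single leaf to the one-node binary tree, and send an extremal tree with root-gall arm subtrees $\{A,B\}$ to the binary tree whose root has the subtrees obtained by recursively transforming $A$ and $B$; this maps an extremal tree with $g$ galls to an unlabeled binary tree with $g+1$ leaves, which is exactly the Wedderburn--Etherington count $U_{g+1}$.
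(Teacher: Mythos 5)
Your proof is correct, and its endpoint is the same bijection the paper uses: extremal simplex time-consistent galled trees are exactly binary trees in which every internal node has been blown up into a $4$-node gall (top node, two hybridizing nodes, reticulation node with a pendant leaf). Where you differ is in how much of this you actually prove and in how you finish. The paper's proof simply asserts that a maximal-gall tree consists entirely of such $4$-node galls and then describes the map from binary trees with $\frac{n+1}{2}$ leaves, leaving the surjectivity/rigidity claim implicit; your Step~1 supplies precisely that missing justification, deriving from the bound $n \ge 2g+1$ (quoted from Section~\ref{sec:ulggt} via \cite{AgranatTamirEtAl24a}) that equality forces a root gall with exactly one tree node per arm and extremal arm subtrees. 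Your Steps~2--3 then replace the explicit bijection by a generating-function identification: the specification $\mathcal{M}=\{\square\}\,\dot{\cup}\,\mu\times\mathrm{MSET}_2(\mathcal{M})$ gives $m(z)=1+\frac{z}{2}\bigl(m(z)^2+m(z^2)\bigr)$, which $\mathcal{U}(z)/z$ satisfies by the Wedderburn--Etherington equation, and uniqueness of the solution with constant term $1$ yields $M_g=U_{g+1}$; I checked this against Table~\ref{table:E1ng} ($1,1,2,3,6,11,23$ for $n=3,5,\dots,15$) and it matches. The trade-off: the paper's argument is shorter and more visual (Figure~\ref{fig:enn-1/2un+1/2}), while yours is more self-contained and rigorous, with the rigidity lemma doing the real work; your closing remark correctly observes that Step~1 alone, read recursively, already gives the paper's bijection without the generating-function detour.
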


\begin{proof}
We produce a bijection between unlabeled simplex time-consistent galled trees with the maximal number of galls, $\frac{n-1}{2}$, and unlabeled rooted binary trees. A simplex time-consistent galled tree with the maximal number of galls has $\frac{n-1}{2}$ galls, each with a top node, two hybridizing nodes, and a reticulation node from which a leaf descends. The bijection is between the $\frac{n-1}{2}$ branching events in a non-plane unlabeled rooted binary tree (with $\frac{n+1}{2}$ leaves) and the galls in the unlabeled $n$-leaf simplex time-consistent galled tree. Each of the $\frac{n-1}{2}$ branching events in the binary tree becomes a 4-node gall by adding two hybridizing nodes on the two edges descending from the tree node and a reticulation node descended from these hybridizing nodes; a leaf descends from the reticulation. The total number of leaves is $\frac{n+1}{2}+\frac{n-1}{2}=n$. 
\end{proof}

\medskip
Figure \ref{fig:enn-1/2un+1/2} shows an example with $n=7$, $\frac{n-1}{2}=3$, and $\frac{n+1}{2}=4$.

\begin{figure}[tb]
    \centering
    \includegraphics[width=17cm]{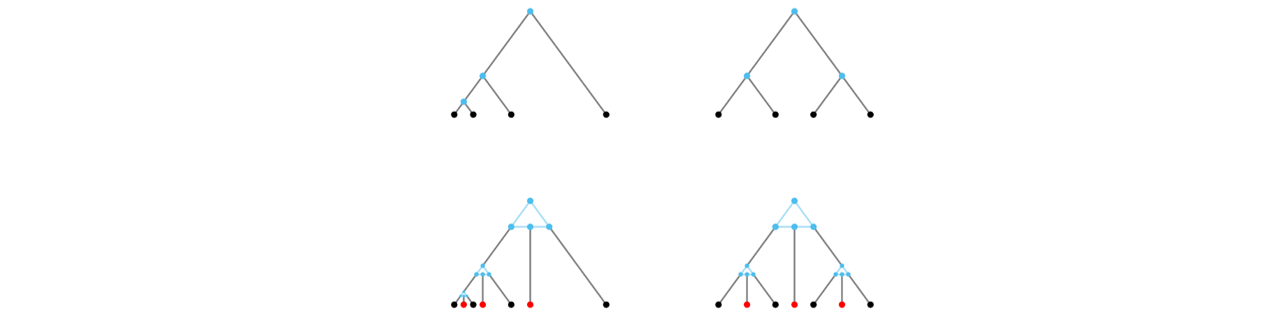}
    \caption{The bijection between non-plane rooted binary trees with 4 leaves (top) and non-plane simplex time-consistent galled trees with 7 leaves and 3 galls (bottom). The bijection matches internal nodes in a rooted binary tree with the galls in the corresponding simplex time-consistent galled tree (marked in light blue). The additional 3 leaves in the galled trees are marked in red.}
    \label{fig:enn-1/2un+1/2}
\end{figure}

\subsection{Leaf-labeled simplex time-consistent galled trees, $e^1_{n,g}$} \label{sec:exactll1tcgt}

In the leaf-labeled case, because each of the $n$ leaf labels is distinct, terms involving symmetric trees do not appear. We incorporate a factor of $n \binom{n-1}{c_1,c_2,\ldots,c_{k-1}}$ to place one label in the leaf descended from the reticulation node of the root gall and to distribute the other $n-1$ leaf labels among the children of the non-root non-reticulation nodes in the root gall.

Therefore, $e^1_{1,0} = 1$, and for $n \geq 2$, $0 \leq g\leq \lfloor \frac{n-1}{2} \rfloor$,
\begin{align}
     e^1_{n,g} &= \frac{1}{2}\bigg{[}\bigg(\sum_{m=1}^{n-1} \binom{n}{m} \sum_{\ell=0}^g e^1_{m,\ell}e^1_{n-m,g-\ell}\bigg) \nonumber \\
    & \quad + \bigg(\sum_{k=3}^{n}(k-2)\sum_{\mathbf{c}\in C(n-1,k-1)}\sum_{\mathbf{d}\in C\big(g-1+(k-1),k-1\big)} n\binom{n-1}{c_1,c_2,\ldots,c_{k-1}}\prod_{i=1}^{k-1} e^1_{c_i,d_{i}-1}\bigg)\bigg{]}.
    \label{eq:e1ngrec}
\end{align}

The number of leaf-labeled simplex time-consistent galled trees with $n$ leaves and any number of galls, $a^1_n$, is the sum of the numbers of leaf-labeled simplex time-consistent galled trees with $n$ leaves and $g$ galls for all possible numbers of galls, or $a^1_n = \sum_{g=0}^{\lfloor \frac{n-1}{2} \rfloor}e^1_{n,g}$.

We write $u_n = (2n-2)! / [2^{n-1} (n-1)!]$ for the number of leaf-labeled trees with $n$ specified labels.
\begin{corollary}
    The number of leaf-labeled simplex time-consistent galled trees with $n$ leaves, $n$ odd, and the maximal number of galls, $\frac{n-1}{2}$, satisfies $e^1_{n,(n-1)/2}=u_{(n+1)/2}\, {n!}/{(\frac{n+1}{2})!}$.
\end{corollary}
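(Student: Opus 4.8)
The plan is to refine the unlabeled bijection of Proposition~\ref{prop:enn-1/2un+1/2} to the leaf-labeled setting, just as the leaf-labeled count $\tilde e_{n,n-1}=C_{n-1}n!$ for general galled trees was obtained from the unlabeled bijection of Proposition~\ref{prop:enn-1cn-1}. First I would recall the forced structure of a simplex time-consistent galled tree $T$ with $n$ leaves and the maximal number $\frac{n-1}{2}$ of galls: every gall has exactly four nodes (a top node, two hybridizing nodes, and a reticulation node), every internal node of $T$ lies in a gall, and exactly one leaf descends from each reticulation node. Contracting each gall to its top node turns $T$ into a rooted binary tree $T'$ with $\frac{n+1}{2}$ leaves --- precisely the leaves of $T$ not descended from a reticulation node --- whose $\frac{n-1}{2}$ internal nodes are in canonical bijection with the $\frac{n-1}{2}$ galls of $T$; under this correspondence each gall remembers the pendant leaf hanging from its reticulation node.

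Next I would spell out the reconstruction. A leaf-labeled simplex time-consistent galled tree with $n$ leaves and $\frac{n-1}{2}$ galls is specified, without ambiguity, by three independent pieces of data: (i) the $\frac{n+1}{2}$-element set $S$ of labels carried by the non-reticulation leaves, chosen in $\binom{n}{(n+1)/2}$ ways; (ii) a leaf-labeled rooted binary tree on the label set $S$, chosen in $u_{(n+1)/2}$ ways; and (iii) a bijection assigning the $\frac{n-1}{2}$ remaining labels to the $\frac{n-1}{2}$ galls (equivalently, to the internal nodes of the tree in (ii)), chosen in $\bigl(\tfrac{n-1}{2}\bigr)!$ ways. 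Conversely, given such a triple, expanding each internal node of the tree in (ii) into a four-node gall --- with the reticulation node on a fixed side, exactly as in Proposition~\ref{prop:enn-1/2un+1/2} --- and hanging the corresponding label from (iii) on the new pendant leaf produces a leaf-labeled simplex time-consistent galled tree with $n$ leaves and $\frac{n-1}{2}$ galls, and every such network arises from one and only one triple. Multiplying the counts gives
\[
e^1_{n,(n-1)/2}=\binom{n}{(n+1)/2}\,u_{(n+1)/2}\,\Bigl(\tfrac{n-1}{2}\Bigr)!=u_{(n+1)/2}\,\frac{n!}{\bigl(\tfrac{n+1}{2}\bigr)!},
\]
which is the claimed identity.

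The one point that needs genuine care --- and hence the main, though mild, obstacle --- is checking that the assignment of a triple to a network is a true bijection with no hidden over- or under-counting. The decisive facts are: a leaf-labeled rooted binary tree on pairwise distinct labels has trivial automorphism group, so the reconstructed network is rigid and the gall--internal-node dictionary is forced; the structural asymmetry inside each gall (a reticulation node and a hybridizing node are not interchangeable) prevents any extra factor of $2$ per gall, exactly as in the unlabeled argument; and in the leaf-labeled world there are no symmetric (palindromic) configurations to discard, so the three choices really are independent. With these verifications in place the displayed computation completes the proof.
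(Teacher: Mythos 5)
Your proposal is correct and follows essentially the same route as the paper: both decompose a maximal leaf-labeled simplex time-consistent galled tree into a choice of the $\binom{n}{(n+1)/2}$ label sets for the non-reticulation leaves, one of $u_{(n+1)/2}$ leaf-labeled binary trees on that set via the bijection of Proposition~\ref{prop:enn-1/2un+1/2}, and one of $\bigl(\tfrac{n-1}{2}\bigr)!$ assignments of the remaining labels to the gall-descendant leaves. Your additional remarks on rigidity and the absence of symmetric configurations only make explicit what the paper leaves implicit.
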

\begin{proof}
By Proposition \ref{prop:enn-1/2un+1/2}, a bijection exists between leaf-labeled trees with $\frac{n+1}{2}$ leaves, $n$ odd, and leaf-labeled simplex time-consistent galled trees with $n$ leaves and $\frac{n-1}{2}$ galls, with only the $\frac{n+1}{2}$ non-gall descendant leaves labeled. The number of ways of labeling those leaves is ${n \choose (n+1)/2}$. The labels of the $\frac{n-1}{2}$ gall descendant leaves can then be assigned in $(\frac{n-1}{2})!$ ways. The number of leaf-labeled simplex time-consistent galled trees with $n$ leaves, $n$ odd, and $\frac{n-1}{2}$ galls is 
$u_{(n+1)/2} {n \choose (n+1)/2} (\frac{n-1}{2})! = u_{(n+1)/2}\, {n!}/{(\frac{n+1}{2})!}$.
\end{proof}

\section{Discussion}

We have studied the combinatorial properties of two types of galled trees, general galled trees and simplex time-consistent galled trees, augmenting our previous studies of a third type of galled trees, the time-consistent galled trees. Our analysis has considered both unlabeled and leaf-labeled cases, and for each class of unlabeled or leaf-labeled galled trees, we have investigated the generating functions and asymptotic approximations for the number of galled trees with $n$ leaves and a fixed number of galls $g$ (Sections \ref{sec:ggt0galls}-\ref{sec:fixedulggt}, \ref{sec:nogalls}-\ref{sec:fixedllggt}, \ref{sec:1compnogalls}-\ref{sec:fixedul1tcgt}, and \ref{sec:1compnogallslabeled}-\ref{sec:fixedll1tcgt}), and the generating functions and asymptotics for the number of galled trees with $n$ leaves, summing across all possible numbers of galls (Sections \ref{sec:allulggt}, \ref{sec:allllggt}, \ref{sec:allul1tcgt}, and \ref{sec:allll1tcgt}). In Section \ref{sec:exact}, we examined numerical values for the enumerations for small values of $n$ and $g$. 

For the number of galled trees with $n$ leaves and $g$ galls, as $n \rightarrow \infty$, in both the unlabeled and leaf-labeled cases, the class of general galled trees ($\tilde{E}_{n,g}$) has the same asymptotic approximation as the time-consistent galled trees ($E_{n,g}$). The simplex time-consistent galled trees ($E^1_{n,g}$) are asymptotically less numerous in the subexponential term. In the unlabeled case, by
eqs.~(\ref{eq:Eng}) and (\ref{eq:Eng1}), for $g \geq 1$, we have 
\begin{equation}
E^1_{n,g} \rho^{-g} \sim E_{n,g} \sim \tilde{E}_{n,g} \sim \frac{2^{2g-1}}{(2g)! \, \gamma^{4g-1} \sqrt{\pi} } n^{2g-\frac{3}{2}} \rho^{-n},
\end{equation}
where $\rho \approx 0.40270$ and $\gamma \approx 1.13003$ are constants that trace originally to the asymptotic analysis of unlabeled trees with no galls, the asymptotic analysis of the Wedderburn--Etherington numbers (eq.~(\ref{eq:Wedderburn})). 

In the leaf-labeled case, many classes of phylogenetic networks have been studied. Using eq.~(\ref{eq:1}), (\ref{eq:2}), (\ref{eq:identical}), and (\ref{eq:1identical}), and switching from $g$ to $k$ to count reticulations in a more general sense, for $k \geq 1$, we have:  
\begin{align}
e^1_{n,k} \frac{(2k)!}{k!} & \sim e_{n,k} \frac{(2k)!}{2^k k!} \sim \tilde{e}_{n,k} \frac{(2k)!}{2^k k!} \nonumber \\
& \sim
PN_{n,k}\sim RV_{n,k} \sim TC_{n,k} \sim N_{n,k} \sim GN_{n,k} \sim GTC_{n,k} \sim \frac{2^{k-1}\sqrt{2}}{k!}n^{n+2k-1}\Big( \frac{2}{e} \Big)^n.
\end{align}
The imposition of a requirement that all reticulations are galls reduces the asymptotic count of the number of general galled trees below that of larger network classes. The simplex condition further reduces this count, as it prevents one subtree from having a further recursively defined structure, fixing that subtree as a leaf.

For the total number of galled trees, not restricting the number of galls, general galled trees possess an extra type of structure not present in time-consistent galled trees: in entry 4 of Figure \ref{fig:ggtbv}, a top node of a gall can be a parent of the reticulation node. As a result, galls can possess only two nodes other than the root, rather than a minimum of three, and the total number of galls can be as large as $n-1$ rather than $\lfloor \frac{n-1}{2} \rfloor$. Although asymptotic approximations of the numbers of general galled trees, time-consistent galled trees, and simplex time-consistent galled trees with fixed numbers of galls differ only in their subexponential growth in both the unlabeled and leaf-labeled cases, when considering all possible numbers of galls, the exponent also differs. In the unlabeled case, all grow with $[\delta/(2\sqrt{\pi})] n^{-\frac{3}{2}} \alpha^{-n}$ for constants $\alpha$ and $\delta$. The growth is fastest for general galled trees; the approximations for the constants $(\delta,\alpha)$ are $(0.1966, 0.1165)$ (eq.~(\ref{eq:fg})); for time-consistent galled trees, $(0.2762, 0.2073)$ (eq.~(42) of \cite{AgranatTamirEtAl24a}); for simplex time-consistent galled trees, $(0.3846, 0.2344)$ (Section \ref{sec:allul1tcgt}). In the leaf-labeled case, the growth follows $(\delta/\sqrt{2})n^{n-1} (\alpha^{-1}/{e})^n$, where $(\delta,\alpha)$ are approximately $(0.1894,0.1250)$ for general galled trees (eq.~(\ref{eq:sqrt17})), $(0.2548,0.2383)$ for time-consistent galled trees \citep[eq.~(25)]{FuchsAndGittenberger25}, and  $(0.3525,0.2629)$ for simplex time-consistent galled trees (eq.~(\ref{eq:tauinv})).

We have reported in Tables \ref{table:Etildng}-\ref{table:e1ng} recursive calculations of the exact numbers of unlabeled and leaf-labeled general galled trees and simplex time-consistent galled trees with specific numbers of leaves $n$ and galls $g$ for small numbers of leaves; these tables can be examined together with Tables 4 and 5 of \cite{AgranatTamirEtAl25}, which reports corresponding quantities for time-consistent galled trees. Comparing Tables \ref{table:etildng} and \ref{table:e1ng} to Tables \ref{table:Etildng} and \ref{table:E1ng}, the tables illustrate the much larger values for leaf-labeled than for unlabeled quantities, comparing Tables \ref{table:Etildng} and \ref{table:etildng} to Tables \ref{table:E1ng} and \ref{table:e1ng}, they illustrate the faster growth of the general galled trees. 

In the case that general galled trees are ``saturated'' with the maximal number of galls, $n-1$, we found in Proposition \ref{prop:enn-1cn-1} that the number of unlabeled general galled trees with $n$ leaves is the Catalan number $C_{n-1}$. For odd $n$, we found in Proposition \ref{prop:enn-1/2un+1/2} that the number of simplex time-consistent galled trees with the maximal number of galls $\lfloor \frac{n-1}{2} \rfloor$ is simply the number of unlabeled binary rooted trees $U_{(n+1)/2}$. These results augment corresponding results for other classes of phylogenetic networks with the maximal number of reticulation nodes, including Proposition 2 for tree-child networks in \cite{FuchsEtAl21b}, Lemma 2 for galled networks in \cite{FuchsEtAl22a}, Theorem 5 for reticulation-visible networks in \cite{ChangAndFuchs24}, and Theorem 22 for galled tree-child networks in \cite{ChangEtAl24}. Further numerical investigation of the counts of phylogenetic networks may uncover additional bijective constructions involving galled trees or other classes of networks with classic combinatorial structures.

\bigskip
\noindent {\small {\bf Acknowledgments.} Lily Agranat-Tamir, Karthik V.~Seetharaman, and Noah A.~Rosenberg acknowledge support from National Science Foundation grant DMS-2450005. Michael Fuchs was partially supported by the National
Science and Technology Council (NSTC), Taiwan under grant NSTC-113-2115-M-004-004-MY3.}

{\small \bibliography{refs}}

\begin{landscape}
\begin{table}[tb]
    \centering
    \begin{adjustbox}{width=1.38\textwidth}
    \begin{tabular}{|c|r|rrrrrrrrrrrr|}
    \hline
Number of    & \multicolumn{1}{c|}{Total number}       & \multicolumn{12}{c|}{Number of trees with a fixed number of galls ($\tilde{E}_{n,g}$)} \\ 
leaves ($n$) & \multicolumn{1}{c|}{of trees ($\tilde{A}_n$)} & $g=0$ & $g=1$ & $g=2$ & $g=3$ & $g=4$ & $g=5$ & $g=6$ & $g=7$ & $g=8$ & $g=9$ & $g=10$ & $g=11$ \\ \hline
 1 &           1 &   1 &      - &             - &              - &            - &          - &          - &          - &          - &         - &       - &      - \\
 2 &           2 &   1 &      1 &             - &              - &            - &          - &          - &          - &          - &         - &       - &      - \\
 3 &           8 &   1 &      5 &             2 &              - &            - &          - &          - &          - &          - &         - &       - &      - \\
 4 &          43 &   2 &     16 &            20 &              5 &            - &          - &          - &          - &          - &         - &       - &      - \\
 5 &         255 &   3 &     49 &           113 &             76 &           14 &          - &          - &          - &          - &         - &       - &      - \\
 6 &       1,637 &   6 &    140 &           526 &            634 &          289 &         42 &          - &          - &          - &         - &       - &      - \\
 7 &      11,004 &  11 &    392 &         2,143 &          4,030 &        3,198 &      1,098 &        132 &          - &          - &         - &       - &      - \\
 8 &      76,634 &  23 &  1,072 &         8,076 &         21,604 &       26,024 &     15,217 &      4,189 &        429 &          - &         - &       - &      - \\
 9 &     547,539 &  46 &  2,898 &        28,667 &        103,267 &      173,886 &    151,509 &     69,808 &     16,028 &      1,430 &         - &       - &      - \\
10 &   3,992,150 &  98 &  7,744 &        97,498 &        453,712 &    1,013,118 &  1,217,115 &    824,004 &    312,468 &     61,531 &     4,862 &       - &      - \\ 
11 &  29,579,670 & 207 & 20,538 &       320,388 &      1,869,315 &    5,328,525 &  8,392,643 &  7,749,108 &  4,271,018 &  1,374,266 &   236,866 &  16,796 &      - \\
12 & 222,089,534 & 451 & 54,095 &     1,024,750 &	   7,318,842 &   25,886,758 & 51,562,707 & 61,852,619 & 46,085,573 & 21,365,251 & 5,965,532 & 914,170 &	58,786 \\
\hline
\end{tabular}
\end{adjustbox}
\caption{Numbers $\tilde{E}_{n,g}$ of small unlabeled general galled trees with specified numbers of leaves ($n$) and galls ($g$). Entries $\tilde{E}_{n,g}$ are computed recursively from eq.~(\ref{eq:Etildngrec}) in Section \ref{sec:exactulggt}. For $g=0$, $E_{n,0}=U_n$.} 
\label{table:Etildng}
\end{table}

\begin{table}[tb]
    \centering
    \begin{adjustbox}{width=1.38\textwidth}
    \begin{tabular}{|c|r|rrrrrrrrrrrr|}
    \hline
Number of & \multicolumn{1}{c|}{Total number} & \multicolumn{12}{c|}{Number of trees with a fixed number of galls ($\tilde{e}_{n,g}$)} \\ 
leaves ($n$) & \multicolumn{1}{c|}{of trees ($\tilde{a}_n$)} & $g=0$ & $g=1$ & $g=2$ & $g=3$ & $g=4$ & $g=5$ & $g=6$ & $g=7$ & $g=8$ & $g=9$ & $g=10$ & $g=11$\\ \hline
 1 &                 1 &           1 &             - &             - &              - &                - &                 - &                 - &              - &               - &              - & - & -\\
 2 &                 3 &           1 &             2 &             - &              - &                - &                 - &                 - &              - &               - &              -& - & -\\
 3 &                36 &           3 &            21 &            12 &              - &                - &                 - &                 - &              - &               - &              - & - & -\\
 4 &               723 &          15 &           228 &           360 &            120 &                - &                 - &                 - &              - &               - &              - & - & -\\
 5 &            20,280 &         105 &         2,805 &          8550 &          7,140 &            1,680 &                 - &                 - &              - &               - &              - & - & -\\
 6 &           730,755 &         945 &        39,330 &       196,560 &        297,360 &          166,320 &            30,240 &                 - &              - &               - &              - & - & -\\ 
 7 &        32,171,580 &      10,395 &       623,385 &     4,639,320 &     11,007,360 &       10,735,200 &         4,490,640 &           665,280 &              - &               - &              - & - & -\\
 8 &     1,673,573,895 &     135,135 &    11,055,240 &   114,896,880 &    392,893,200 &      583,783,200 &       415,134,720 &       138,378,240 &     17,297,280 &               - &              - & - & -\\
 9 &   100,442,870,640 &   2,027,025 &   217,237,545 & 3,010,855,050 & 14,023,894,500 &   29,357,559,000 &    31,167,536,400 &    17,344,847,520 &  4,799,995,200 &     518,918,400 &              - & - & -\\
10 & 6,831,585,584,775 &  34,459,425 & 4,689,345,150 &83,706,777,000 &509,858,533,800 &1,427,560,734,600 & 2,100,224,926,800 & 1,719,046,929,600 &783,566,784,000 & 185,253,868,800 & 17,643,225,600 & - & -\\
11 & 519,288,366,989,700 & 654,729,075 &	110,367,613,125 &	2,469,131,626,500 &	19,067,559,310,800 &	68,780,493,381,600 &	133,494,142,181,400 &	149,527,115,337,600	& 99,101,998,195,200 &	38,188,761,811,200 &	7,877,700,230,400 &	670,442,572,800 &	- \\
12 & 43,626,178,967,384,480 & 13,749,310,575 &	2,813,814,441,900 &	77,183,193,087,000 &	737,459,213,891,400 &	3,330,980,344,692,000 &	8,233,128,718,214,400 &	12,036,684,871,411,200 &	10,810,119,006,086,400 &	6,002,304,743,635,200 &	2,001,271,079,808,000 &	366,061,644,748,800 &	28,158,588,057,600 \\
\hline
\end{tabular}
\end{adjustbox}
\caption{Numbers $\tilde{e}_{n,g}$ of small leaf-labeled general galled trees with specified numbers of leaves ($n$) and galls ($g$). Entries $\tilde{e}_{n,g}$ are computed recursively from eq.~(\ref{eq:etildngrec}) in Section \ref{sec:exactllggt}. For $g=0$, $\tilde{e}_{n,0}$ is the number of leaf-labeled rooted binary trees $(2n-3)!!=(2n-2)!/[2^{n-1} (n-1)!]$.}
\label{table:etildng}
\end{table}

\clearpage
\begin{table}[tb]
    \centering
    \begin{tabular}{|c|r|rrrrrrrr|}
    \hline
Number of & \multicolumn{1}{c|}{Total number} & \multicolumn{8}{c|}{Number of trees with a fixed number of galls ($E^1_{n,g}$)} \\ 
leaves ($n$) & \multicolumn{1}{c|}{of trees ($A^1_n$)} & $g=0$ & $g=1$ & $g=2$ & $g=3$ & $g=4$ & $g=5$ & $g=6$ & $g=7$\\ \hline
 1 &         1 &     1 &          - &           - &         - &         - &       - &     - & - \\
 2 &         1 &     1 &          - &           - &         - &         - &       - &     - & - \\
 3 &         2 &     1 &          1 &           - &         - &         - &       - &     - & - \\
 4 &         5 &     2 &          3 &           - &         - &         - &       - &     - & - \\
 5 &        15 &     3 &         11 &           1 &         - &         - &       - &     - & - \\
 6 &        47 &     6 &         32 &           9 &         - &         - &       - &     - & - \\
 7 &       158 &    11 &         96 &          49 &         2 &         - &       - &     - & - \\
 8 &       545 &    23 &        270 &         230 &        22 &         - &       - &     - & - \\
 9 &     1,940 &    46 &        758 &         951 &       182 &         3 &       - &     - & - \\
10 &     7,030 &    98 &      2,074 &       3,657 &     1,146 &        55 &       - &     - & - \\
11 &    25,917 &   207 &      5,638	&      13,242 &	    6,235 &	      589 &	      6 &     - & - \\
12 &    96,755 &   451 &	 15,132 & 	   45,955 &	   30,295 &  	4,789 &	    133 &     - & - \\
13 &   365,274 &   983 &  	 40,364 &	  153,943 &	  135,839 &	   32,331 &   1,803 &	 11 & - \\
14 & 1,391,629 & 2,179 &	106,858 &	  501,536 &	  571,309 &	  191,543 &	 17,881 &   323 & - \\
15 & 5,344,385 & 4,850 &	281,494 &	1,595,983 &	2,284,292 &	1,027,307 &	145,178 & 5,258 & 23 \\
\hline
\end{tabular}
\caption{Numbers $E^1_{n,g}$ of small unlabeled simplex time-consistent galled trees with specified numbers of leaves ($n$) and galls ($g$). Entries $E^1_{n,g}$ are computed recursively from eq.~(\ref{eq:E1ngrec}) in Section \ref{sec:exactul1tcgt}. The values of $\tilde{A}_n$ for $n\leq25$ missing from the table, as used in Section \ref{sec:allul1tcgt}, are calculated recursively using eq.~(\ref{eq:exactulAtildn}): $ A^1_{16}= 20,665,633, A^1_{17}= 80,394,281, A^1_{18}= 314,422,685, A^1_{19}= 1,235,563,322, A^1_{20}= 4,875,984,643, A^1_{21}= 19,316,320,501, A^1_{22}= 76,788,144,951, A^1_{23}= 306,222,020,733, A^1_{24}= 1,224,709,419,623, A^1_{25}= 4,911,122,651,176$.}
\label{table:E1ng}
\end{table}


\begin{table}[tb]
    \centering
    \begin{adjustbox}{width=1.38\textwidth}
    \begin{tabular}{|c|r|rrrrrrrr|}
    \hline
Number of & \multicolumn{1}{c|}{Total number} & \multicolumn{8}{c|}{Number of trees with a fixed number of galls ($e^1_{n,g}$)} \\ 
leaves ($n$) & \multicolumn{1}{c|}{of trees ($a^1_n$)} & $g=0$ & $g=1$ & $g=2$ & $g=3$ & $g=4$ & $g=5$ & $g=6$ & $g=7$ \\ \hline
 1 &              1 &          1 &             - &             - &              - &          - & - & - & - \\
 2 &              1 &          1 &             - &             - &              - &          - & - & - & - \\
 3 &              6 &          3 &             3 &             - &              - &          - & - & - & - \\
 4 &             63 &         15 &            48 &             - &              - &          - & - & - & - \\
 5 &            870 &        105 &           705 &            60 &              - &          - & - & - & - \\
 6 &         15,075 &        945 &        10,980 &         3,150 &              - &          - & - & - & - \\
 7 &        317,520 &     10,395 &       186,795 &       117,180 &          3,150 &          - & - & - & - \\
 8 &      7,891,695 &    135,135 &     3,487,680 &     3,916,080 &        352,800 &          - & - & - & - \\
 9 &    226,068,570 &  2,027,025 &    71,291,745 &   127,348,200 &     25,084,080 &    317,520 & - & - & - \\
10 &  7,332,968,475 & 34,459,425 & 1,587,996,900 & 4,173,488,550 &  1,475,107,200 & 61,916,400 & - & - & - \\ 
11 & 265,668,133,500  & 654,729,075 &	38,347,414,875 &	140,238,945,000 &	79,152,207,750 & 7,222,446,000 & 52,390,800 & - & - \\
12 & 10,633,098,774,375 & 13,749,310,575 &	998,905,446,000 &	4,875,076,206,000 &	4,065,788,034,000 &	663,772,725,000 &	15,807,052,800 & - & - \\
13 & 465,947,254,322,550 & 316,234,143,225 &	27,937,693,008,225 &	176,135,053,594,500 &	205,330,075,750,800 &	53,465,297,886,000 &	2,750,056,709,400 &	12,843,230,400 & - \\
14 & 22,187,548,315,807,876 & 7,905,853,580,625 &	835,382,740,292,100 &	6,628,818,891,444,750 &	10,359,374,548,723,200 &	3,985,285,430,926,800 &	365,229,364,500,000 &	5,551,486,340,400 & - \\
15 & 1,140,812,140,619,151,104 & 213,458,046,676,875 &	26,603,123,656,984,876 &	260,094,334,918,918,496 &	527,417,382,642,900,800 &	283,763,693,046,833,984 &	41,365,877,823,270,000 &	1,349,887,731,192,000 &	4,382,752,374,000 \\
\hline
\end{tabular}
\end{adjustbox}
\caption{Numbers $e^1_{n,g}$ of small leaf-labeled simplex time-consistent galled trees with specified numbers of leaves ($n$) and galls ($g$). Entries $e^1_{n,g}$ are computed recursively from eq.~(\ref{eq:e1ngrec}) in Section \ref{sec:exactll1tcgt}.}
\label{table:e1ng}
\end{table}
\end{landscape}

\end{document}